\definecolor{darkgreen}{RGB}{20,200,10}
\newcommand\nz[1]{\mbox{}
{\marginpar{\color{darkgreen}N}}
{\sf\noindent\color{darkgreen}#1}}%
\newcommand\jc[1]{\mbox{}
{\marginpar{\color{blue}J}}
{\sf\noindent\color{blue}#1}}%
\theoremstyle{plain}
\newtheorem{theorem}{Theorem}
\newtheorem{lemma}[theorem]{Lemma}
\newtheorem{corollary}[theorem]{Corollary}
\newtheorem{proposition}[theorem]{Proposition}
\theoremstyle{definition}
\newtheorem{definition}[theorem]{Definition}
\newtheorem{example}[theorem]{Example}
\newtheorem{question}[theorem]{Question}
\theoremstyle{remark}
\DeclareMathOperator{\Rootmap}{RootIns}
\DeclareMathOperator{\Rootdiag}{RootIns}
\DeclareMathOperator{\Mapins}{MapIns}
\DeclareMathOperator{\Diagins}{DiagIns}
\newcommand{\RootEdge}[1]{\Rootmap_{#1}}
\newcommand{\RootChord}[1]{\Rootdiag_{#1}}
\newcommand{\MapIns}[2]{\Mapins_{#1,#2}}
\newcommand{\DiagIns}[2]{\Diagins_{#1,#2}}
\DeclareMathOperator{\varbox}{\oplus}
\DeclareMathOperator{\rid}{RootInDeg}
\DeclareMathOperator{\id}{InDeg}
\DeclareMathOperator{\ve}{v}
\def\multiset#1#2{\ensuremath{\left(\kern-.3em\left(\genfrac{}{}{0pt}{}{#1}{#2}\right)\kern-.3em\right)}}
\newcommand\chd[2]{(#1,#2)}
\newcommand\definand[1]{{\emph{#1}}}
\DeclareMathOperator{\orbit}{orbit}
\title{Connected chord diagrams and bridgeless maps}
\author{Julien Courtiel\thanks{Supported by the French ``Agence Nationale de la Recherche'' MetAConC.}\\
\small Universit\'e de Caen Normandie\\[-0.8ex]
\small\tt julien.courtiel@unicaen.fr\\
\and
 Karen Yeats\thanks{Supported by an NSERC Discovery grant.} \\
\small University of Waterloo\\[-0.8ex]
\small\tt kayeats@uwaterloo.ca \\
\and Noam Zeilberger\\
\small University of Birmingham\\[-0.8ex]
\small\tt zeilbern@cs.bham.ac.uk
}
\date{\,}
\begin{document}

\maketitle

\begin{abstract}{We present a surprisingly new connection between two well-studied combinatorial classes: rooted connected chord diagrams on one hand, and rooted bridgeless combinatorial maps on the other hand. We describe a bijection between these two classes, which naturally extends to indecomposable diagrams and general rooted maps. As an application, this bijection provides a simplifying framework for some technical quantum field theory work realized by some of the authors.
Most notably, an important but technical parameter naturally translates to vertices at the level of maps. We also give a combinatorial proof to a formula which previously resulted from a technical recurrence, and with similar ideas we prove a conjecture of Hihn.
Independently, we revisit an equation due to Arquès and Béraud for the generating function counting rooted maps with respect to edges and vertices, giving a new bijective interpretation of this equation directly on indecomposable chord diagrams, which moreover can be specialized to connected diagrams and refined to incorporate the number of crossings.
Finally, we explain how these results have a simple application to the combinatorics of lambda calculus, verifying the conjecture that a certain natural family of lambda terms is equinumerous with bridgeless maps.
}
\end{abstract}

\section{Introduction}
\label{sec:introduction}

Connected chord diagrams are well-studied combinatorial objects that appear in numerous mathematical areas such as 
knot theory \cite{Stoimenow,Bori2000,Zagier2001}, graph sampling \cite{Acan2013}, analysis of data structures \cite{flajolet-datastructure}, and  bioinformatics \cite{combRNA}.
Their counting sequence (Sloane's \href{https://oeis.org/A000699}{A000699})
has been known since Touchard's early work \cite{Touchard1952}.
In this paper we present a bijection with another fundamental class of objects: bridgeless combinatorial maps.
Despite the ubiquity of both families of objects in the literature, this bijection is, to our knowledge, new. Furthermore, it is fruitful in the sense that it generalizes and restricts well, and useful parameters carry through it.




\subsection{Definitions}
\label{sec:defn}

Before outlining the contributions of the paper more precisely, we begin by recalling here the formal definitions of (rooted) chord diagrams and (rooted) combinatorial maps, together with some auxiliary notions and notation.

\subsubsection{Chord diagrams}
\label{sec:defn:diagrams}


\begin{definition}[Matchings on linear orders]
Let $P$ be a linearly ordered finite set.
An \definand{$n$-matching} in $P$ is a mutually disjoint collection $C$ of ordered pairs $(a_1,b_1), \dots, (a_n,b_n)$ of elements of $P$, where $a_i < b_i$ for each $1 \le i \le n$.
A \definand{perfect matching} in $P$ is a matching which includes every element of $P$.
\end{definition}

\begin{definition}[Chord diagrams]
A \definand{rooted chord diagram} is a linearly ordered, non-empty finite set $P$ equipped with a perfect matching $C$.
The pairs in $C$ are called \definand{chords}, while the \definand{root chord} is the unique pair whose first component is the least element of $P$.
\end{definition}
\noindent
Two $n$-matchings $(P,C)$ and $(P',C')$ are considered isomorphic if they are equivalent up to relabeling of the elements and reordering of the pairs, or in other words, if there is an order isomorphism $\phi : P \cong P'$ and a permutation $\pi \in S_n$ such that $\phi C = C' \pi$, where $\phi C = (\phi(a_1),\phi(b_1)), \dots, (\phi(a_n),\phi(b_n))$ denotes the image of $C$ 
under $\phi$, and $C' \pi = (a'_{\pi(1)},b'_{\pi(1)}),\dots,(a'_{\pi(n)},b'_{\pi(n)})$ denotes the reindexing of $C'$ 
by $\pi$.
Up to isomorphism, a chord diagram with $n\ge 1$ chords may therefore be identified with a perfect matching on the ordinal $2n = \{0 < \dots < 2n-1\}$, and so we will usually omit reference to the underlying set of a chord diagram, simply keeping track of the number of chords $n$ (we refer to the latter as the \definand{size} of the diagram).
Isomorphism classes of chord diagrams of size $n$ can also be presented as \emph{fixed point-free involutions} on the set $2n$, although we find the definition as a perfect matching more convenient to work with.

To visualize a chord diagram, we represent the elements of its underlying linear order by a series of collinear dots, and the matching by a collection of arches joining the dots together in pairs: see Figure~\ref{fig:intermsofpermu}(a) for an example.
\begin{figure}
\begin{center}
 \hfill
 (a) 
\begin{minipage}[c]{0.25\textwidth}
\includegraphics[scale=3.3]{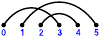} 
\end{minipage}
 \hfill \hfill (b)
\begin{minipage}[c]{0.15\textwidth}
\includegraphics[scale=2]{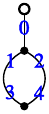}
\end{minipage}
\begin{minipage}[c]{0.3\textwidth}
\begin{tabular}{r@{ }c@{ }l}
$H$ & =& $\{\,0,1,2,3,4\,\}$ \\
$\sigma$ &=& $(0\,1\,2)(3\,4)$ \\
$\alpha$ &=& $(0)(1\,3)(2\,4)$
\end{tabular}
\end{minipage}
 \hfill \,
\end{center}
\caption{ (a) Rooted chord diagram associated to the perfect matching $\chd03, \chd15,\chd24$. (b) A rooted map and its permutation representation.}
\label{fig:intermsofpermu}
\end{figure}
In the literature, rooted chord diagrams are also drawn according to a circular convention: instead of being arranged on a line, the $2n$ points are drawn on an oriented circle and joined together by chords, and then one point is marked as the root.
This convention has been notably used in~\cite{MYchord,HYchord}, but the linear convention is the one we adopt for the rest of the document\footnote{People also consider \emph{unrooted} chord diagrams with no marked point, see for example \cite[\S6.1]{LZgraphs}. Since we work only with rooted chord diagrams in this paper, we refer to them simply as chord diagrams, or even as ``diagrams'' when there is no confusion.}.

\begin{definition}[Intersection graph, connected diagrams]
\label{def:intgraph-condiag}
The \definand{intersection graph} of a chord diagram $C$ is defined as the digraph with a vertex for every chord, and an oriented edge from chord $(a,b)$ to chord $(c,d)$ whenever $a < c < b < d$.
A chord diagram is said to be \definand{connected} (or \emph{irreducible}) if its intersection graph is (weakly) connected.
\end{definition}
\noindent
Equivalently, a diagram of size $n$ is connected if for every proper non-empty subsegment $[i,j] \subset [0,2n-1]$, there exists a chord with one endpoint in $[i,j]$ and the other endpoint outside $[i,j]$.
All connected diagrams of size $\le 3$ are depicted in the first row of Table~\ref{tab:smallex}.




\begin{table}
\begin{center}
\begin{tabular}{cccc} \hline
Objects &  Size $1$ & Size $2$ & Size $3$ \\
\hline
\begin{minipage}{0.14\textwidth}
\vspace*{0.1cm}
\begin{center}
Connected diagrams 
\vspace*{0.1cm}
\end{center}
\end{minipage}
& 
\begin{minipage}{0.15\textwidth}
\begin{center}
\includegraphics[scale=1.3]{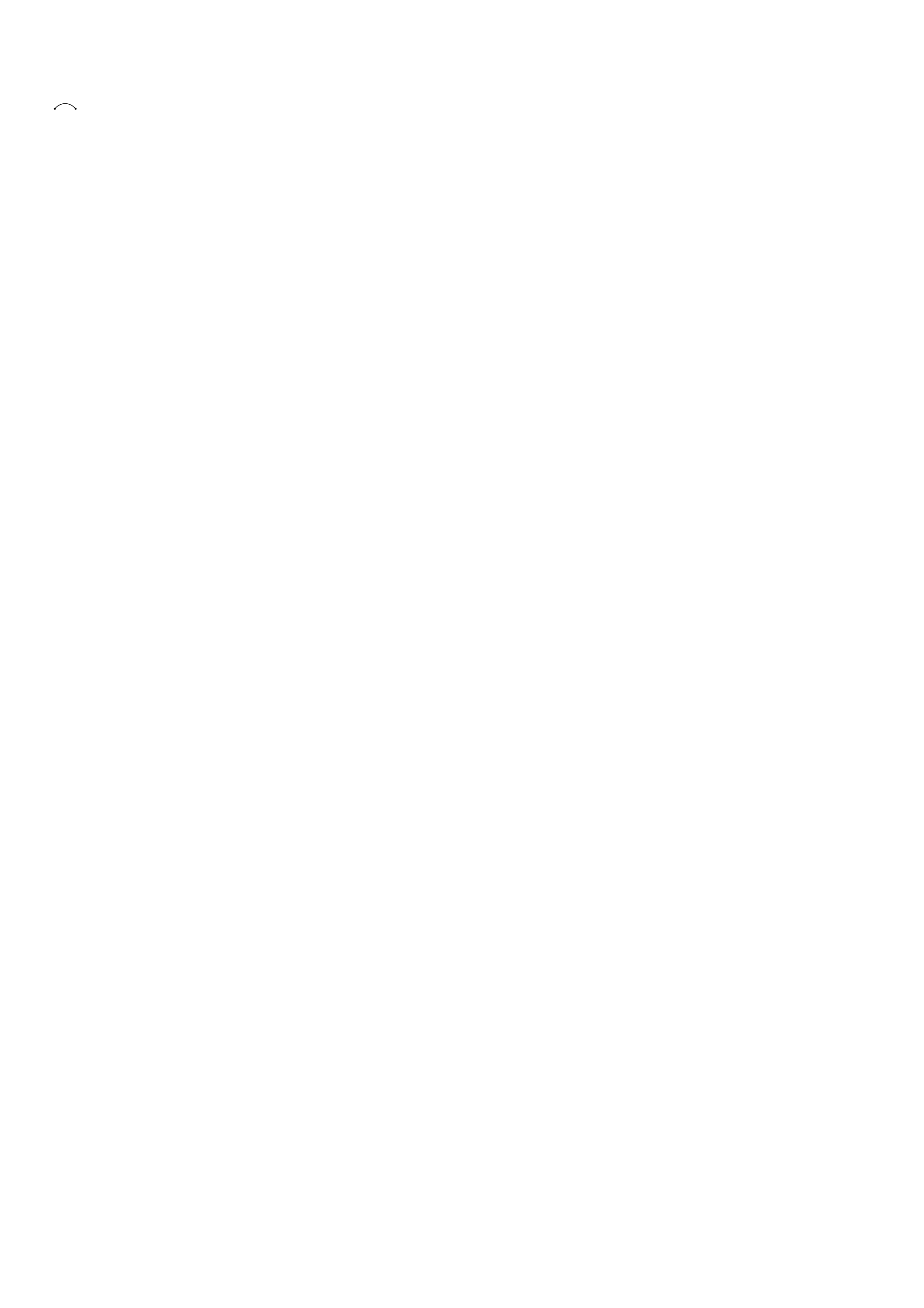} 
\end{center}
\end{minipage}
& 
\begin{minipage}{0.15\textwidth}
\begin{center}
\includegraphics[scale=1.3]{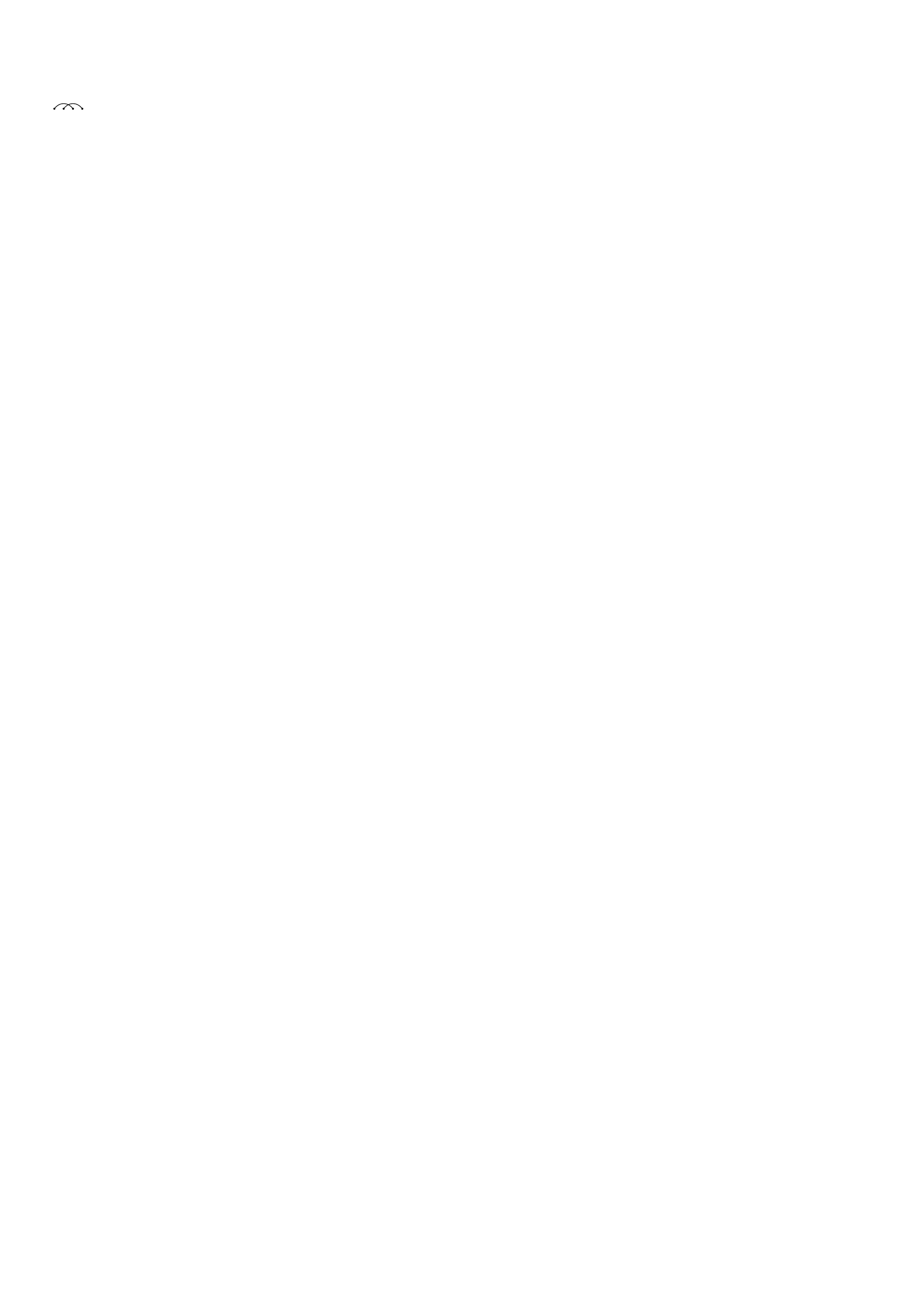} 
\end{center}
\end{minipage}
&
\begin{minipage}{0.4\textwidth}
\begin{center}
\includegraphics[width=\textwidth]{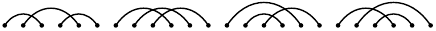} 
\end{center}
\end{minipage} 
 \\ \hline
\begin{minipage}{0.14\textwidth}
\begin{center}
Bridgeless maps
\end{center}
\end{minipage}
& 
\begin{minipage}{0.15\textwidth}
\begin{center}
\includegraphics[scale=1.3]{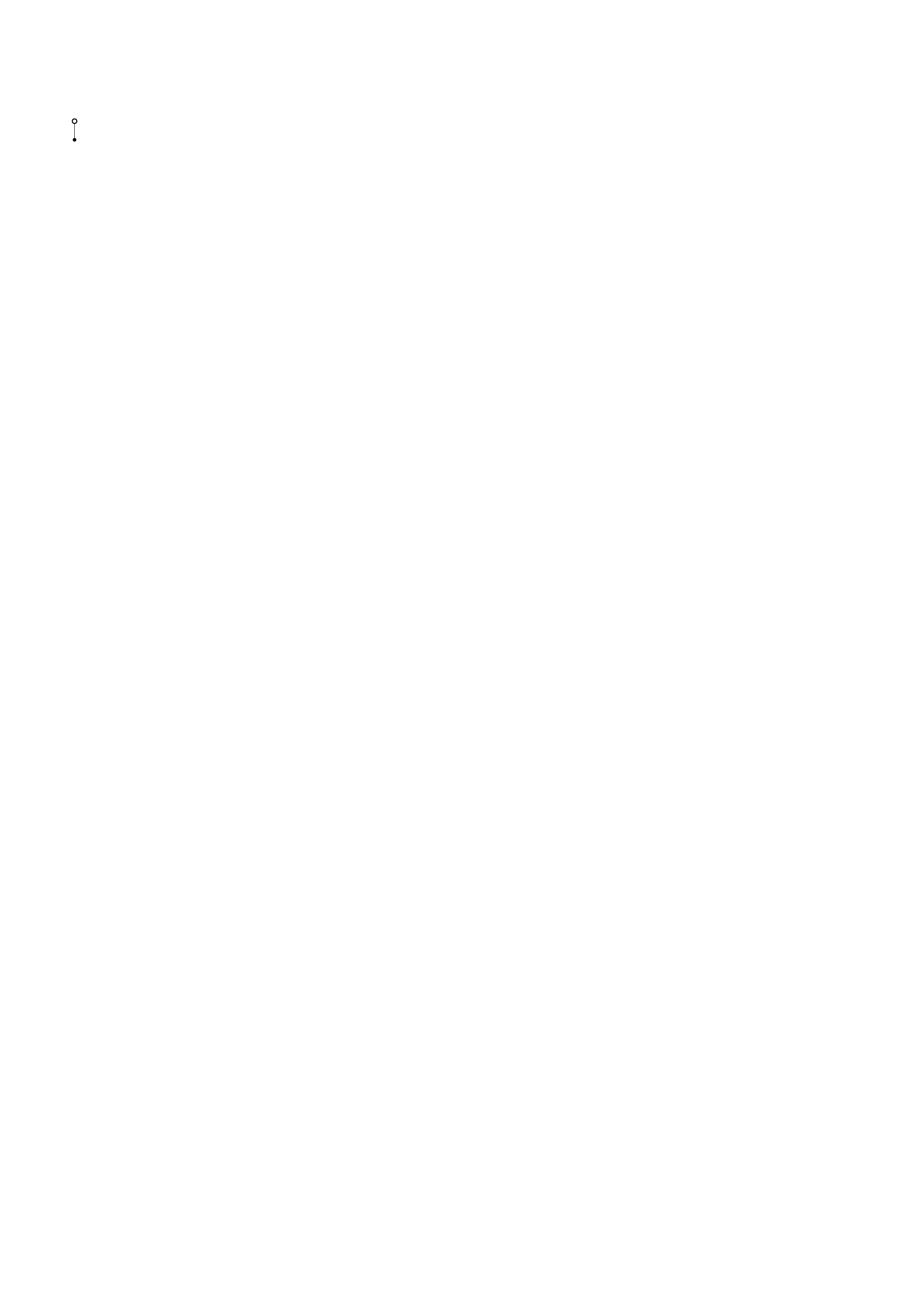} 
\end{center}
\end{minipage}
& 
\begin{minipage}{0.15\textwidth}
\begin{center}
\includegraphics[scale=1]{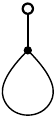} 
\end{center}
\end{minipage}
&
\begin{minipage}{0.4\textwidth}
\begin{center}
\includegraphics[width=\textwidth]{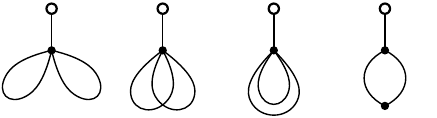} 
\end{center}
\end{minipage} 
\\
\hline 
\end{tabular}
\end{center}
\caption{Small connected diagrams and bridgeless maps}
\label{tab:smallex}
\end{table}
\noindent
Besides connectedness, we also consider the weaker notion of ``indecomposability'' of a diagram, defined in terms of diagram concatenation.
\begin{definition}[Diagram concatenation]
Let $C_1$ and $C_2$ be chord diagrams of sizes $n_1$ and $n_2$, respectively.
The \definand{concatenation} of $C_1$ and $C_2$ is the chord diagram $C_1C_2$ of size $n_1 + n_2$ whose underlying linear order is given by the ordinal sum of the underlying linear orders of $C_1$ and $C_2$, and whose matching is determined by $C_1$ on the first $2n_1$ elements and by $C_2$ on the next $2n_2$ elements.
\end{definition}
\noindent
As the name suggests, diagram concatenation has a simple visual interpretation as laying two chord diagrams side by side.
\begin{definition}[Indecomposable diagrams]
A rooted chord diagram is said to be \definand{indecomposable} if it cannot be expressed as the concatenation of two smaller diagrams.
\end{definition}
\noindent
Every connected diagram is indecomposable, but the converse is not true: see Table~\ref{tab:smallex2}.
\begin{table}
\begin{center}
\begin{tabular}{ccc} \hline
Objects &  Size $2$ & Size $3$ \\
\hline
\begin{minipage}{0.215\textwidth}
\begin{center}
\vspace*{0.1cm}
Indecomposable disconnected diagrams
\vspace*{0.1cm}
\end{center}
\end{minipage}
& 
\begin{minipage}{0.07\textwidth}
\begin{center}
\includegraphics[scale=1.3]{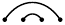} 
\end{center}
\end{minipage}
&
\begin{minipage}{0.6\textwidth}
\begin{center}
\includegraphics[width=\textwidth]{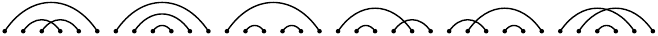} 
\end{center}
\end{minipage} 
 \\ \hline
\begin{minipage}{0.2\textwidth}
\begin{center}
 Maps with at least one bridge
\end{center}
\end{minipage}
& 
\begin{minipage}{0.07\textwidth}
\begin{center}
\includegraphics[scale=1.3]{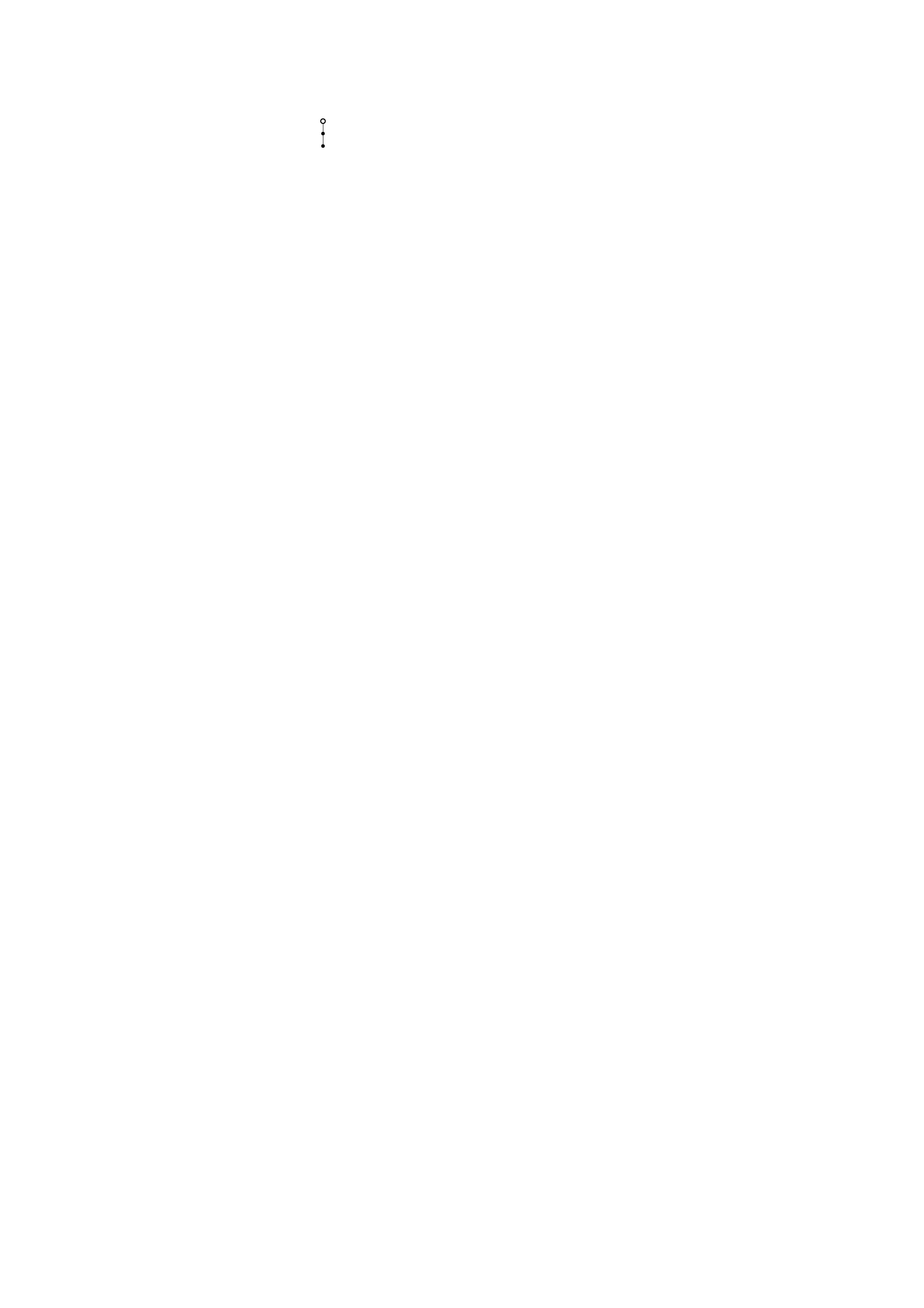} 
\end{center}
\end{minipage}
&
\begin{minipage}{0.6\textwidth}
\begin{center}
\includegraphics[width=0.95\textwidth]{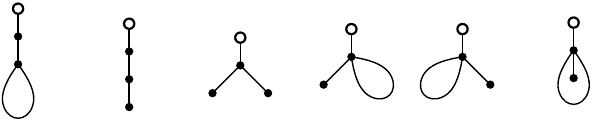} 
\end{center}
\end{minipage} 
\\
\hline 
\end{tabular}
\end{center}
\caption{Small indecomposable diagrams and maps not displayed in Table~\ref{tab:smallex}.}
\label{tab:smallex2}
\end{table}

Finally, it will often be convenient for us to speak about \emph{intervals} in a chord diagram.
By an interval, we simply mean a pair of successive points: thus a diagram with $n$ chords (joining $2n$ points) has $2n-1$ intervals.


\subsubsection{Combinatorial maps}
\label{sec:defn:maps}

Combinatorial maps are representations of embeddings of graphs into oriented surfaces \cite{JSmaps,LZgraphs,Eynard2016}.
Like chord diagrams, they come in both rooted and unrooted versions, but we will be dealing only with rooted maps in this paper.
\begin{definition}[Combinatorial maps]
A \definand{rooted combinatorial map} is a transitive permutation representation of the group $\Gamma = \langle \sigma,\alpha \mid \alpha^2 = 1\rangle$, equipped with a distinguished fixed point for the action of $\alpha$.
Explicitly, this consists of the following data:
\begin{itemize}
\item a set $H$ (whose elements are called \definand{half-edges});
\item a permutation $\sigma$ and an involution $\alpha$ on $H$;
\item a half-edge $r \in H$ (called the \definand{root}) for which $\alpha(r) = r$;
\item such that between any pair of half-edges $x,y \in H$, there is a permutation $f$ defined using only compositions of $\sigma$ and $\alpha$ (and/or their inverses) for which $f(x) = y$.
\end{itemize}
\end{definition}
\noindent
Two rooted combinatorial maps are considered isomorphic just when there is a bijection between their underlying sets of half-edges which commutes with the action of $\Gamma$ and preserves the root.
Note that our definition of combinatorial maps is a bit non-standard in allowing the involution $\alpha$ to contain fixed points and taking the root as a distinguished fixed point of $\alpha$.
Defining the root as a fixed point is convenient for dealing with the trivial map (pictured at the left end of the second row of Table~\ref{tab:smallex}), while the presence of additional fixed points means that in general our maps can have ``dangling edges'' in addition to the root.
Formally, the underlying graph of a combinatorial map is defined as follows.
\begin{definition}[Underlying graph]
Let $M = (H,\sigma,\alpha,r)$ be a rooted combinatorial map.
The \definand{underlying graph} of $M$ has vertices given by the orbits of $\sigma$, edges given by the orbits of $\alpha$, and the incidence relation between vertices and edges defined by their intersection.
\end{definition}
\noindent
For any $v \in \orbit(\sigma)$ and $e \in \orbit(\alpha)$ we have $|v \cap e| \in \{ 0,1,2 \}$, that is, a vertex and an edge can be incident either zero, once, or twice in the underlying graph.
An edge which is incident to the same vertex twice is called a \definand{loop}, while an edge which is incident to only one vertex exactly once is called a \definand{dangling edge}.
The \definand{size} of a map is defined here as the number of edges in its underlying graph (giving full value to dangling edges).
We call a combinatorial map \definand{closed} if its underlying graph contains no dangling edges other than the root, and otherwise we call it \definand{open}.
For the most part, we will be dealing with closed maps, so we usually omit the qualifier unless it is important to remind the reader when we are dealing with open maps (as will at times be convenient).
We also usually omit the prefix ``rooted'', again because we only ever consider rooted combinatorial maps.

Figure~\ref{fig:intermsofpermu}(b) shows an example of a (closed rooted) combinatorial map and its graphical realization, where we have indicated the unattached end of the root by a white vertex.
This is also an example of a \emph{bridgeless} map in the sense of the definition below.
\begin{proposition}
The underlying graph of any combinatorial map is connected.
\end{proposition}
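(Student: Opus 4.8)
The plan is to exploit the transitivity built into the definition of a combinatorial map. Fix a connected component $C$ of the underlying graph, and let $H_C \subseteq H$ be the set of all half-edges that lie on some vertex belonging to $C$ (equivalently, as will emerge, on some edge belonging to $C$). Since $H$ is nonempty---it contains the root $r$---and every half-edge lies on exactly one vertex, at least one such component exists and $H_C \neq \emptyset$. I would prove the proposition by showing that $H_C$ is invariant under both $\sigma$ and $\alpha$, and then invoking transitivity to conclude that $H_C$ must be all of $H$.

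First I would check $\sigma$-invariance. If $x \in H_C$, then the vertex containing $x$ is its $\sigma$-orbit $v$, which lies in $C$ by definition of $H_C$. Applying $\sigma$ keeps $x$ inside $v$, so $\sigma(x) \in H_C$; since $\sigma$ is a permutation of the finite set $H$, the same reasoning gives closure under $\sigma^{-1}$.

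Next comes the heart of the argument, $\alpha$-invariance, which is where the notion of incidence enters. The key observation is that a single half-edge $x$ simultaneously determines its vertex $v$ (its $\sigma$-orbit) and its edge $e$ (its $\alpha$-orbit), and that the mere fact $x \in v \cap e$ witnesses that $v$ and $e$ are incident in the underlying graph. Hence, if $x \in H_C$ so that $v \in C$, then $e$ is incident to a vertex of $C$ and therefore also lies in $C$. Now $\alpha(x)$ lies on the same edge $e \in C$ and on some vertex $v'$; the half-edge $\alpha(x) \in v' \cap e$ shows that $v'$ is incident to $e$, so $v' \in C$ as well, giving $\alpha(x) \in H_C$. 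With both invariances in hand, $H_C$ is a nonempty subset of $H$ closed under $\sigma$, $\sigma^{-1}$, and $\alpha = \alpha^{-1}$, hence closed under every element of $\Gamma = \langle \sigma,\alpha \mid \alpha^2 = 1\rangle$. By transitivity of the action, the only such subset is $H$ itself, so $H_C = H$ and the underlying graph has a single connected component.

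The one point demanding care---the main, if modest, obstacle---is the symmetric bookkeeping in the $\alpha$-invariance step: one must track that a single half-edge records both an incident vertex and an incident edge, and use this to propagate membership in $C$ back and forth between vertices and edges. Everything else, including the passage from invariance under the generators to invariance under all of $\Gamma$ and the handling of degenerate cases such as the trivial map or dangling edges arising from fixed points of $\alpha$, is routine and relies only on the finiteness of $H$ and the nonemptiness guaranteed by the root.
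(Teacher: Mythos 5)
Your proof is correct and is essentially the paper's own argument written out in full: the paper's proof consists of the single line ``By transitivity of the action of $\Gamma$,'' and your invariance-of-$H_C$ argument is exactly the standard elaboration of that line. The $\sigma$- and $\alpha$-invariance bookkeeping you supply is accurate, so nothing further is needed.
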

\begin{proof}
By transitivity of the action of $\Gamma$.
\end{proof}
\begin{definition}[Bridgeless maps]
A combinatorial map is said to be \definand{bridgeless} if its underlying graph is 2-edge-connected, that is, if there does not exist an edge whose deletion separates the graph into two connected components (such an edge is called a \definand{bridge}).
\end{definition}
\noindent
The second row of Table~\ref{tab:smallex} lists all (closed) bridgeless maps with at most three edges, while the second row of Table~\ref{tab:smallex2} lists all the remaining maps of size $\le 3$.
Observe that although the half-edges are unlabeled (again, since we are interested in isomorphism classes of labelled structures), the specification of the permutation $\sigma$ is contained implicitly in the cyclic ordering of the half-edges around each vertex, and the specification of the involution $\alpha$ in the gluing together of half-edges to form edges.
Observe also that one of the maps in Table~\ref{tab:smallex} contains a pair of crossing edges: such crossings should be thought of as ``virtual'', arising from the projection of a graph embedded in a surface of higher genus down to the plane.
For a more detailed discussion of the precise correspondence between combinatorial maps and embeddings of graphs into oriented surfaces, see \cite{JSmaps,LZgraphs,Eynard2016}.

Finally, we introduce a few additional technical notions.
In a rooted map, we distinguish the root from the \emph{root edge} and the \emph{root vertex}: the root vertex is the unique vertex which is incident to the root, while the root edge (in a map of size $> 1$) is the unique edge following the root in the positive direction (i.e., according to the permutation $\sigma$) around the root vertex.
A \textit{corner} is the angular section between two distinct adjacent half-edges.
The \textit{root corner} is the corner between the root and the root edge.
Half-edges are in obvious bijection with corners (for maps of size $> 1$), but it is often more convenient to work with the corners: for example, pointing out two corners is a clear way to show how to insert an edge in a map.


\subsection{Enumerative and bijective links between maps and diagrams}

We demonstrate in this paper the existence of a size-preserving bijection between bridgeless maps and connected diagrams:
\[
 [\text{bridgeless combinatorial maps}] \overset{\theta}\longleftrightarrow
 [\text{connected chord diagrams}].
\]
Indeed, we prove that $\theta$ is the \emph{restriction} of a bijection between combinatorial maps and indecomposable diagrams:
\[
 [\text{combinatorial maps}] \overset{\phi}\longleftrightarrow
 [\text{indecomposable chord diagrams}].
\]
Conversely, we prove that $\phi$ is the \emph{extension} of $\theta$ obtained by composing with a canonical decomposition of rooted maps (respectively, indecomposable diagrams) in terms of the bridgeless (respectively, connected) component of the root.

The existence of $\theta$ implies in particular the following enumerative statement.
\begin{theorem} The number of rooted bridgeless combinatorial maps of size $n$ is equal to the number of rooted connected chord diagrams of size $n$.
\label{theo:simplebijection}
\end{theorem}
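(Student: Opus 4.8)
The most natural and informative route is to construct the size-preserving bijection $\theta$ explicitly, since any bijection between the size-$n$ bridgeless maps and the size-$n$ connected diagrams forces their cardinalities to agree. The plan is to define $\theta$ by induction on the size $n$, matching a recursive decomposition of connected diagrams against a recursive decomposition of bridgeless maps, and to arrange that the two decompositions carry the same combinatorial data.

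On the diagram side, I would peel off the root chord of a connected diagram $D$ of size $n > 1$, recording how it sat inside $D$: its root in-degree $\rid$ in the intersection graph (the chords it crosses) together with the intervals where those crossings occur. Removing the root chord leaves a smaller, possibly disconnected, diagram, and the aim is to show that every connected diagram of size $n$ arises in a unique way from a smaller connected structure by a root-chord insertion $\Rootmap$; the accompanying functional equation for connected diagrams will be nonlinear, reflecting that the leftover splits into a sequence of connected pieces. Dually, on the map side, I would delete the root edge of a bridgeless map and show that every bridgeless map of size $n$ is obtained uniquely by inserting an edge between two corners of a smaller bridgeless structure via $\Mapins$. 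The bijection is then built inductively: decompose the diagram, translate the recorded insertion data into corner-insertion data through the inductive hypothesis, and reassemble the map.

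The heart of the argument is a statistic match. For $\theta$ to be well-defined and bijective it suffices that, for corresponding smaller objects, the number of distinct connected diagrams produced by the available root-chord insertions equals the number of distinct bridgeless maps produced by the available corner insertions. Here the role of $\rid$ (and its non-root companion $\id$) is decisive: as announced in the abstract, the root in-degree should translate into the number of vertices of the associated map, and the vertex count is exactly what governs how many admissible corners are available for inserting the root edge. I would make this dictionary precise, check that it is consistent under the recursion, and verify the base case $n = 1$ by direct inspection.

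I expect the main obstacle to be twofold. First, proving that the insertion operations respect the defining constraints in both directions simultaneously: that a root-chord insertion keeps a diagram connected exactly when the matched edge insertion keeps a map bridgeless, and symmetrically for the removals. Being bridgeless is a global property of a map, so the delicate point is to show that it is controlled by precisely the same local insertion data that controls connectedness of the diagram. Second, nailing the statistic dictionary on the nose --- not merely that the two families are equinumerous at each step, but that $\rid$ corresponds to vertices and that the insertion sites are in genuine bijection --- requires getting the recursive bookkeeping exactly right. Once both are established, the claimed equality of counting sequences is immediate; a generating-function check that both sides satisfy the same equation could serve as an independent confirmation.
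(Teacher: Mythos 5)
Your proposal has a genuine gap at its foundation: the decompositions you propose do not stay inside the two classes, and the claim of unique generation by a single insertion is false on both sides. Deleting the root chord of a connected diagram can disconnect it into several components (e.g.\ the diagram with chords $(0,3),(1,5),(2,4)$ is connected, but removing the root chord leaves the nested, disconnected pair $(1,5),(2,4)$), and deleting the root edge of a bridgeless map can create a bridge (e.g.\ the bridgeless map of size $3$ consisting of the root plus a double edge between two vertices: removing the root edge leaves a single edge, which is a bridge). So not every connected diagram is $\Rootdiag_k$ of a smaller connected diagram, and not every bridgeless map is $\Rootmap_k$ of a smaller bridgeless map; moreover $\Mapins$ inserts a map \emph{via a bridge}, so it cannot appear in any decomposition of bridgeless maps. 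Once the leftover is a multi-component diagram (resp.\ a map with bridges), the number of root insertions that restore connectivity (resp.\ that are compatible with bridgelessness) depends on the shapes and arrangement of the pieces in a complicated way, and matching those counts across the two classes is essentially the whole theorem --- which your sketch defers rather than proves. The paper sidesteps exactly this trap with a different decomposition (the boxed product): pull the right endpoint of the root chord leftward (resp.\ slide the non-root endpoint of the root edge toward the root) until the object splits into exactly \emph{two} connected diagrams (resp.\ two bridgeless maps), the second carrying a marked interval (resp.\ marked corner). Since a size-$k$ diagram has $2k-1$ intervals and a size-$k$ map has $2k-1$ corners, both classes satisfy $c_1=1$ and $c_n=\sum_{k=1}^{n-1}(2k-1)\,c_k\,c_{n-k}$, which proves the theorem.

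Your proposed statistic dictionary is also quantitatively wrong, so it could not rescue the induction. The number of admissible insertion sites is the number of intervals or corners, $2k-1$, which is governed by the number of chords/edges, not by the number of vertices; vertices play no role in counting corners. Furthermore, under the paper's actual bijection the vertices of the map correspond to the \emph{terminal} chords of the diagram (that is the parameter alluded to in the abstract), and the root-vertex in-degree $\rid(M)$ for the rightmost DFS corresponds to the position $b(C)$ of the first terminal chord --- not to the set of chords crossed by the root chord. So the matching you call decisive (root in-degree $\leftrightarrow$ vertex count $\leftrightarrow$ number of admissible corners) is incorrect at both ends, and repairing it would force you back to an edge/chord-counting argument of the kind the paper actually gives.
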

\noindent
The fact that bridgeless maps and connected diagrams define equivalent combinatorial classes has apparently not been previously observed in the literature, let alone with a bijective proof.
In contrast, an explicit bijection between combinatorial maps and indecomposable diagrams was already given by Ossona de Mendez and Rosenstiehl \cite{OdMRtrans,OdMRencoding}, who moreover wrote (in the early 2000s) that the corresponding enumerative statement ``was known for years, in particular in quantum physics'', although ``no bijective proof of this numerical equivalence was known''.
\begin{theorem}[Ossona de Mendez and Rosenstiehl \cite{OdMRtrans,OdMRencoding}]
The number of rooted combinatorial maps of size $n$ is equal to the number of rooted indecomposable chord diagrams of size $n$.
\label{theo:OMRbijection}
\end{theorem}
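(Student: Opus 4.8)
The plan is to prove the stronger statement that there is an explicit size-preserving bijection $\phi$ between rooted combinatorial maps and indecomposable chord diagrams; the enumerative equality then follows at once. I would build $\phi$ directly on top of the bijection $\theta$ between bridgeless maps and connected diagrams supplied by Theorem~\ref{theo:simplebijection}, by exhibiting two parallel ``root-component'' decompositions and checking that $\theta$ is compatible with them. In particular $\phi$ will restrict to $\theta$ on the bridgeless/connected objects by construction, matching the relationship announced in the excerpt.

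First I would set up the decomposition on the diagram side. Given an indecomposable diagram $D$, let $K$ be the connected component of the root chord in the intersection graph; this is a connected diagram, say of size $k$, and I claim $D$ is recovered uniquely from $K$ together with, for each of the $2k-1$ internal intervals of $K$, an arbitrary (possibly empty) chord diagram placed into that interval. The key point is that a chord outside $K$ can neither cross nor strictly enclose a chord of $K$: crossing would join it to $K$ directly, and enclosing one while the root chord lies outside would force, along the crossing chain connecting that enclosed chord to the root, some $K$-chord to cross the outer chord, again a contradiction. Hence every non-$K$ chord nests strictly inside a single interval of $K$, and conversely, since $K$ is connected it ``spans'' the diagram, so inserting diagrams into the $2k-1$ internal intervals — but crucially not off either end, which would produce a concatenation — always yields an indecomposable diagram with root component exactly $K$. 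As every chord diagram factors uniquely as a concatenation of indecomposables, each inserted piece is a possibly-empty sequence of indecomposable diagrams, which supplies the recursion.

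Next I would establish the matching decomposition on the map side. Given a rooted map $M$, let $B$ be its bridgeless root component, i.e. the $2$-edge-connected block containing the root; this is a bridgeless map of size $k$, and $M$ should be recovered uniquely from $B$ together with, at each of its $2k-1$ corners, a possibly-empty sequence of rooted maps glued on through bridges (the maps hanging off that corner, taken in their induced order). This is the decomposition of a map along the tree of its $2$-edge-connected blocks, read off from the root, and it is arranged to have exactly the same shape as the diagram-side decomposition: a size-$k$ component carrying $2k-1$ slots, each slot holding a recursive sequence of the smaller objects. The sequence structure, rather than a single attachment per corner, is what makes the two shapes count identically.

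With both decompositions in hand, $\phi$ is defined by a size-preserving recursion: decompose $M$ into $(B,\text{attachments})$, apply $\theta$ to send $B$ to a connected diagram $K$ of the same size, apply $\phi$ recursively to each attached map, and reassemble by inserting the resulting diagrams into the intervals of $K$ that correspond under $\theta$ to the corners of $B$. Running this in reverse peels the connected root component off an indecomposable diagram, so $\phi$ is a bijection, and size is preserved at every step; the claimed equality of counts follows by induction on size. The main obstacle — where essentially all the work lies — is verifying that $\theta$ respects the slot correspondence: one must pin down how $\theta$ identifies the $2k-1$ corners of a bridgeless map with the $2k-1$ intervals of the associated connected diagram, and confirm that this identification is natural enough that attaching sub-maps at corners mirrors inserting sub-diagrams at intervals while preserving $2$-edge-connectivity on one side and indecomposability on the other. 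Establishing the two canonical decompositions rigorously — in particular the uniqueness of the bridgeless-block decomposition and the sequence structure at each corner — is the remaining technical ingredient. A more self-contained alternative would be to follow Ossona de Mendez and Rosenstiehl and read a chord diagram directly off a depth-first (Trémaux) traversal of the map, but the decomposition route above is more modular and reuses $\theta$.
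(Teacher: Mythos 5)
Your proposal is correct, but it takes a genuinely different route from the paper's own proof of this statement. The paper proves Theorem~\ref{theo:OMRbijection} via Proposition~\ref{prop:recurrence2}: a Tutte-style root-edge deletion on maps (root edge a bridge or not) matched against root-chord deletion on diagrams (deletion decomposes the diagram or not), showing both counting sequences satisfy $b_1=1$ and $b_n=\sum_{k=1}^{n-1}b_k\,b_{n-k}+(2n-3)b_{n-1}$ --- a short, self-contained argument that never invokes Theorem~\ref{theo:simplebijection}. Your route --- peel off the connected component of the root chord, respectively the bridgeless block of the root, leaving a sequence of smaller objects in each of the $2k-1$ intervals, respectively corners, then bootstrap from $\theta$ --- is instead essentially the decomposition the paper develops later (Proposition~\ref{prop:decomposition} and the definition of $\overline\theta$ in Section~4.2) in order to build the explicit bijection $\phi$ and prove Theorem~\ref{theo:restriction}. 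Your supporting arguments are sound: in particular, the observation that a chord outside the root component $K$ can neither cross nor enclose a chord of $K$ (since a crossing chain from an enclosed chord to the root chord would force some $K$-chord to cross the enclosing chord) correctly shows that non-$K$ chords nest inside single intervals of $K$, and indecomposability correctly rules out material beyond the last point of $K$, so the ``connected component $\times$ sequences in $2k-1$ slots'' decomposition is valid and mirrors the block-tree decomposition on the map side. What your approach buys is strictly more than the statement: a bijection $\phi$ that by construction restricts to $\theta$ on bridgeless maps and connected diagrams, which the recurrence proof cannot provide; what it costs is the dependence on Theorem~\ref{theo:simplebijection} (legitimate, since Proposition~\ref{prop:recurrence} precedes this theorem) and the extra structural lemmas. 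One clarification: the ``main obstacle'' you flag at the end is not actually an obstacle for the enumerative statement. For pure counting, any fixed identification of the $2k-1$ corners of a size-$k$ bridgeless map with the $2k-1$ intervals of a size-$k$ connected diagram suffices (for instance the paper's Bridge First Labeling against left-to-right interval order); the carefully chosen, commutation-compatible correspondence (Lemmas~\ref{lem:commut} and~\ref{lem:commut2}) is needed only for the stronger claims of Section~4, namely that $\phi=\overline\theta$ and that statistics transfer across the bijection.
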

\noindent
It may appear surprising that Theorem~\ref{theo:simplebijection} has been seemingly overlooked despite Theorem~\ref{theo:OMRbijection} having been ``known for years'', and with the latter even being given a nice bijective proof over a decade ago (that was further analyzed and simplified by Cori \cite{Cori2009}).
Yet, as we will discuss, there is a partial explanation, namely that Ossona de Mendez and Rosenstiehl's bijection \emph{does not restrict} to a bijection between bridgeless maps and connected maps (and moreover cannot for intrinsic reasons, see Section~\ref{subsec:arques-beraud-statement}).
In other words, both of the bijections $\theta$ and $\phi$ we describe in this paper are apparently fundamentally new.

\subsection{Structure of the document}

We will begin in Section~\ref{sec:cardinalities} by showing that connected diagrams and bridgeless maps are equinumerous due to them satisfying the same recurrences, and similarly for indecomposable diagrams and general maps.  Implicitly this already induces bijections, but there are choices to be made, and good choices will give bijections preserving interesting and important statistics.  Thus we will proceed in Section~\ref{sec:operations} to define operations on diagrams and maps which will be the building blocks of the bijections.  The bijections themselves are presented in Section~\ref{sec:bijection}.  Our bijection from connected diagrams to bridgeless maps has two descriptions, one of which makes clear that it extends to a bijection between indecomposable diagrams and general maps that we also give.  Furthermore, we characterize those diagrams which are taken to planar maps under our bijection.

The remainder of the paper looks at applications resulting from our bijections.
Section~\ref{sec:qft} applies our bijection from connected diagrams to some chord diagram expansions in quantum field theory which some of us, with other collaborators, have discovered as series solutions to a class of functional equations in quantum field theory.  Some interesting results have been proved thanks to the diagram expansions, but some of the diagram parameters were obscure.  We will use our bijections to maps  to simplify and make more natural these parameters and the resulting expansion.  Most notably, a special class of chords, known as terminal chords, corresponds to vertices in the maps. Moreover, we use this new interpretation in terms of maps to give a combinatorial proof to a quite involved formula appearing in~\cite{HYchord}, which was a key point of that article but did not have a clear explanation aside a technical recurrence, and with similar ideas we prove a conjecture of Hihn.

Section~\ref{sec:arques-beraud} revisits a functional equation of Arquès and Béraud for the generating function counting rooted maps with respect to edges and vertices. We give a new bijective interpretation of this functional equation directly on indecomposable chord diagrams, with the important property that it restricts to connected diagrams to verify a modified functional equation. These equations have also appeared recently in studies of the combinatorics of lambda calculus, and we explain how to use our results to verify a conjecture that a certain family of lambda terms is equinumerous with bridgeless maps.


\section{Equality of the cardinality sequences}
\label{sec:cardinalities}

Once the observation has been made, it is quite elementary to show that the cardinalities of the above-mentioned classes are the same by proving that they satisfy the same recurrences, as we will do in this section.
First, we establish the recurrence for connected diagrams and bridgeless maps, which implies Theorem~\ref{theo:simplebijection}.
Then, we establish a recurrence for indecomposable diagrams and unrestrained maps, which yields a new proof of Theorem~\ref{theo:OMRbijection}.
Note that the propositions we prove in this section also yield implicit correspondences between the combinatorial classes, but they do not determine which map a given diagram must be sent to.
Although it is easy to settle that in an arbitrary way, the more careful analysis of Section~\ref{sec:operations} and \ref{sec:bijection} will yield bijections preserving various important statistics.

\subsection{Between connected diagrams and bridgeless maps}
\label{ss:btwn-cd-bm}

We combinatorially show the following recurrence -- which characterizes the sequence \texttt{A000699} in the OEIS -- for connected diagrams and bridgeless maps.
%
%

\begin{proposition} The number $c_n$ of rooted connected diagrams of size $n$ and the number of rooted bridgeless maps of size $n$ both satisfy $c_1 =1$ and
\begin{equation}
c_n = \sum_{k=1}^{n-1} (2k-1) \, c_k \, c_{n-k}.
\end{equation}
\label{prop:recurrence}
\end{proposition}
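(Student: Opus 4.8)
The plan is to treat the two classes separately, exhibiting for each a size-preserving bijection that realizes the right-hand side, after disposing of the base case: there is exactly one connected diagram of size $1$ (the single chord) and exactly one bridgeless map of size $1$ (the trivial map consisting of just the root), so $c_1 = 1$ in both worlds. For $n \ge 2$ the quantity $\sum_{k=1}^{n-1}(2k-1)\,c_k\,c_{n-k}$ counts triples consisting of an object of size $k$, a choice among $2k-1$ distinguished ``slots'' of that object, and an object of size $n-k$; the whole point is to identify, in each world, what plays the role of the $2k-1$ slots and what gluing operation reassembles the size-$n$ object from the two pieces.

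For connected diagrams the key observation is that a diagram of size $k$ has exactly $2k-1$ intervals (consecutive pairs among its $2k$ points). I would therefore aim for a bijection between connected diagrams of size $n$ and triples $(D_1, I, D_2)$, where $D_1$ is connected of size $k$, $I$ is one of its $2k-1$ intervals, and $D_2$ is connected of size $n-k$. In the forward direction one inserts $D_2$ into the interval $I$ of $D_1$, but \emph{not} by naive concatenation: placing $D_2$ consecutively inside an interval leaves it as a nested, hence disconnected, sub-block, and this already fails at $n=2$, where naive insertion produces the nested diagram $\chd02$-free configuration rather than the unique connected crossing diagram. Instead the insertion must spread $D_2$ so that its root chord straddles $I$ and crosses the chords of $D_1$ spanning $I$, thereby forcing the intersection graph to remain connected. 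In the reverse direction one must locate a canonical ``root component'' $D_1$ inside a given connected diagram, together with the interval into which the remainder was inserted; pinning down this component unambiguously, via the root chord and its crossing structure, is what upgrades the counting identity to a genuine bijection.

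For bridgeless maps the parallel observation is that a closed map of size $k$ has exactly $2k-1$ half-edges, hence $2k-1$ corners: its $k$ edges consist of the root (a single dangling half-edge) together with $k-1$ genuine edges contributing $2(k-1)$ half-edges. I would thus seek a bijection between bridgeless maps of size $n$ and triples $(M_1, c, M_2)$, where $M_1$ is bridgeless of size $k$, $c$ is one of its $2k-1$ corners, and $M_2$ is bridgeless of size $n-k$, the piece $M_2$ being grafted onto $M_1$ at the corner $c$. Here the essential constraint is that the grafting must introduce the connection in a $2$-edge-connected way: attaching $M_2$ by a single new edge would create a cut edge and destroy bridgelessness. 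The natural remedy is to splice $M_2$ onto $M_1$ by identifying the root vertex of $M_2$ with the corner $c$ (a vertex identification can create a cut vertex but never a cut edge, so the result is bridgeless precisely when both pieces are), arranging the attachment so that the connecting half-edges lie on a common cycle; reconciling the two roots and the edge count while keeping the assembled map closed is the delicate point.

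The main obstacle, in both worlds, is not the counting but the canonicity of the decomposition: one must fix a distinguished size-$k$ ``root piece'' so that the map from size-$n$ objects to triples is well defined and invertible, and one must verify that the gluing preserves exactly the property in play—connectivity of the intersection graph for diagrams, and $2$-edge-connectivity for maps. I expect the maps side to be the more delicate, precisely because preserving bridgelessness rules out the obvious single-edge attachment and forces an attachment along a cycle with careful root bookkeeping. Once the correct operations are in place, checking that they are mutually inverse and size-additive is routine, and matching them against the recurrence is immediate. For the present proposition only the resulting numerical identity is needed; the statistic-preserving refinements of these operations are deferred to the later sections.
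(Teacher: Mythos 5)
Your counting skeleton is right (triples, $2k-1$ intervals versus $2k-1$ corners), but both constructions have genuine gaps, and the one on the maps side is fatal as stated. Identifying the root vertex of $M_2$ with a corner of $M_1$ adds no edge, so the arithmetic fails: a closed map of size $m$ has one dangling root half-edge and $m-1$ real edges, so your merge of a size-$k$ and a size-$(n-k)$ piece yields $n-2$ real edges and \emph{two} dangling half-edges --- an open map, not a closed map of size $n$. The failure is visible already at $n=2$: the recurrence forces the unique triple (trivial map, its one corner, trivial map) to produce the loop at the root vertex, which no vertex identification can create. The paper's operation is essentially the repair your ``delicate point'' remark is groping for, but it works with edges rather than vertices: if $M_1$ is trivial, insert a \emph{new} edge joining the root corner of $M_2$ to its marked corner (creating the missing edge and a cycle, hence no bridge); otherwise, unstick the second endpoint of the root edge of $M_1$ and re-attach it in the marked corner of $M_2$, with the root of $M_2$ completed into the vacated corner. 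Crucially, this edge-based surgery admits a canonical inverse --- slide the second endpoint of the root edge toward the root until an edge becomes a bridge, then cut it --- whereas splitting a merged vertex back into two is ambiguous (which half-edges go to which side?), and your ``attachment along a common cycle'' gives no canonical rule.

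On the diagrams side your proposal is closer in spirit but still incomplete: you correctly observe that naive insertion of $D_2$ into an interval of $D_1$ produces a nested, disconnected block, but the fix you sketch (``spread $D_2$ so that its root chord straddles $I$'') is not a defined operation --- there are many ways to interleave $2(n-k)$ endpoints across an interval, and you defer exactly the step that makes the identity a bijection, namely the canonical recovery of the root piece. The paper's operation resolves this with a single precise move: insert all of $C_2$ immediately after the root chord of $C_1$, then migrate the \emph{right endpoint of the root chord of $C_1$} into the marked interval of $C_2$ (note the marked interval lives in the second factor, which is equivalent to your convention after reindexing $k \mapsto n-k$). Connectivity is preserved because the stretched root chord now ties the two components together, and the inverse is canonical: mark the interval after the root chord, then pull the right endpoint of the root chord leftward until the diagram falls into two connected components. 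So while your plan identifies the correct enumerative structure, the two gluing operations --- the actual content of the proof --- are either unworkable (maps) or unspecified (diagrams), and the proposal would not compile into a proof without replacing them by surgeries of the kind the paper uses.
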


\begin{proof} The recurrence relation translates the fact that it is possible to combine two objects, one of which is weighted by twice its size (minus 1), to bijectively give a bigger object of cumulated size.   We describe how to do so for our two classes. \vspace{3pt}

\begin{figure}[!ht]
\begin{center}
\begin{tabular}{c}
{ \begin{minipage}{0.13\textwidth}
\small
\begin{center}
 connected diagrams
\end{center} \end{minipage}
} \ \,\ \ \  \begin{minipage}{0.7\textwidth}
\begin{center}
\includegraphics[width=0.97\textwidth]{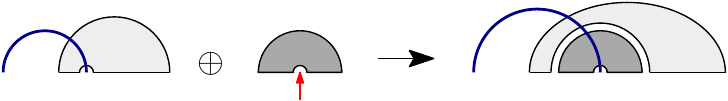} 
\end{center}
\end{minipage} \ \ \ 
\\ 
{ \begin{minipage}{0.12\textwidth}
\small
\begin{center}
 bridgeless maps
\end{center} \end{minipage}
} \ \ \  \ \ $\left\{\ \ \begin{minipage}{0.7\textwidth}
\begin{center}
\includegraphics[width=0.95\textwidth]{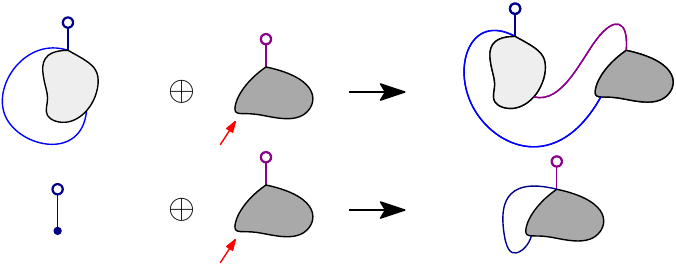} 
\end{center}
\end{minipage}
\right.$
\end{tabular}
\end{center}
\caption{Schematic decomposition of connected diagrams and bridgeless maps.}
\label{fig:decomp}
\end{figure}

\noindent \textbf{Connected diagrams.}
For connected diagrams, $2k-1$ counts the number of intervals delimited by $k$ chords.
In other words, it means there are $2k-1$ ways to insert a new root chord in a diagram of size $k$. We can find in the literature numerous ways to combine a diagram $C_1$ with another diagram $C_2$ with a marked interval~\cite{NWchord}. The one we choose comes from~\cite{CYchord} and is illustrated in Figure~\ref{fig:decomp}. The idea is to insert $C_2$ into $C_1$, just after the root chord of $C_1$. Then, we move the right endpoint of the root chord of $C_1$ to the marked interval of $C_2$. We thus obtain our final combined diagram.

To recover $C_1$ and $C_2$, we mark the interval just after the root chord. Then, we pull the right endpoint of the diagram to the left until the diagram disconnects into two connected components. The first component is $C_1$, the second one $C_2$. \vspace{3pt}

\noindent \textbf{Bridgeless maps.}
In maps of size $k$, the number $2k-1$ refers to the number of corners. Given two maps $M_1$ and $M_2$ where $M_2$ has a marked corner, we construct a larger map as follows (this is also illustrated in Figure~\ref{fig:decomp}).

If $M_1$ has size $1$, we insert a new edge in $M_2$ which links the root corner of $M_2$ to its marked corner. If $M_1$ has size greater than $1$ then it has a root edge. Let us unstick the second endpoint of the root edge and insert it in the marked corner of $M_2$. Then, we take the root of $M_2$ and  insert it where  the second endpoint of the root edge of $M_1$ was. We thus obtain our final map. Note that no bridge has been created in the process.

To recover $M_1$ and $M_2$, we start by marking the corner after the second endpoint of the root edge of the new map. Then, grab this endpoint and slide it up, towards the root. When a bridge appears, we stop the process and cut the bridge, marking it as a root. The two resulting diagrams are $M_1$ are $M_2$. If we reach the root vertex with this process without creating any bridge, then it means that $M_1$ was the trivial map with one half-edge. In that case, we obtain $M_2$ by just removing the root edge.
\end{proof}

\subsection{Between indecomposable diagrams and  maps}
\label{ss:btwn-id-m}

We now prove a similar proposition for indecomposable diagrams and unconstrained maps.
%

\begin{proposition} The number $b_n$ of indecomposable diagrams of size $n$ and the number of rooted  maps of size $n$ both satisfy $b_1 =1$ and
\begin{equation}
b_n = \sum_{k=1}^{n-1} b_k \, b_{n-k} + (2n-3) b_{n-1}.
\label{eq:recind}
\end{equation}
\label{prop:recurrence2}
\end{proposition}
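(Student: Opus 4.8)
The plan is to follow the same strategy as in the proof of Proposition~\ref{prop:recurrence}: for each of the two classes I will exhibit two size-increasing construction operations whose images partition the objects of size $n$ and which realize the two summands of~\eqref{eq:recind}. The base case $b_1=1$ is immediate, since there is a unique indecomposable diagram and a unique map of size $1$, so I focus on $n\ge 2$. The first summand $\sum_{k=1}^{n-1} b_k\,b_{n-k}$ should come from an operation gluing two objects of sizes $k$ and $n-k$, while the second summand $(2n-3)\,b_{n-1}$ should come from an operation taking a single object of size $n-1$ together with one of its $2n-3$ distinguished positions ($2n-3$ intervals for a diagram, $2n-3$ corners for a map).

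For maps I would classify a map $M$ of size $n$ according to whether its root edge is a bridge. If the root edge is \emph{not} a bridge, deleting it yields a map $M'$ of size $n-1$ together with a marked corner (the corner that the far end of the root edge occupied); conversely, given $M'$ and one of its $2n-3$ corners, I reattach a new root edge running from the root vertex to that corner, which cannot be a bridge because it closes a cycle through the connected $M'$. This realizes the term $(2n-3)\,b_{n-1}$. If the root edge \emph{is} a bridge, cutting it separates $M$ into two maps $M_1$ (carrying the original root) and $M_2$ (rooted at the freed half-edge of the bridge); a half-edge count shows the sizes add to $n$, so this realizes $\sum_{k=1}^{n-1} b_k\,b_{n-k}$, the inverse reinserting the bridge as the new root edge and absorbing the root of $M_2$.

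For indecomposable diagrams I would split on whether $D$ has a proper \emph{initial closed block}: a self-matched segment $\{1,\dots,j\}$ (with $j\le 2n-2$) immediately following the least point $0$. If there is no such block, then the right endpoint $p$ of the root chord lies strictly inside the diagram, and deleting the root chord leaves an indecomposable diagram $\widehat D$ of size $n-1$ with a marked interval recording the former position of $p$; conversely, prepending a root chord into any of the $2n-3$ intervals of an indecomposable $\widehat D$ returns such a $D$, because the root chord shields every prefix of $D$ up to $p$ while indecomposability of $\widehat D$ handles the prefixes beyond $p$. This realizes $(2n-3)\,b_{n-1}$. If an initial closed block does exist, I peel off the \emph{smallest} one as a diagram $D_2$ of size $m$, whose minimality forces it to be indecomposable, and take $D_1$ to be the indecomposable size-$(n-m)$ diagram that remains; conversely I insert $D_2$ just inside the left endpoint of the root chord of an indecomposable $D_1$, which again stays indecomposable by the same shielding argument. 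This realizes $\sum_{k=1}^{n-1} b_k\,b_{n-k}$.

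The crux, in both settings, is to check that the two operations are mutually inverse bijections whose images exactly partition the size-$n$ objects. For diagrams the delicate interaction is between the shielding effect of the root chord and the (in)decomposability of the sub-diagrams: I must verify that in the case with no initial closed block, deleting the root chord really does yield an \emph{indecomposable} $\widehat D$ and leaves $p$ strictly interior, so that the marked position is one of the $2n-3$ intervals and the count is $(2n-3)\,b_{n-1}$; that in the other case the smallest initial block and the remainder $D_1$ are both indecomposable; and that the two cases are genuinely disjoint, which reduces to showing that inserting $D_2$ inside the root chord creates an initial closed block whereas prepending a root chord to an interval of an indecomposable diagram creates none. For maps the corresponding care is in the bridge case, where I must assign roots to the two pieces consistently and check the bookkeeping $(2k-1)+(2(n-k)-1)+1 = 2n-1$, and confirm that the reinserted root edge in the non-bridge case is never a bridge. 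I would finally sanity-check the whole case analysis against the values $b_2=2$ and $b_3=10$. Establishing this common recurrence for both classes then reproves Theorem~\ref{theo:OMRbijection}.
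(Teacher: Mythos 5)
Your proof is correct, and for maps it coincides with the paper's argument exactly: the same Tutte-style split on whether the root edge is a bridge, with the non-bridge case realizing $(2n-3)\,b_{n-1}$ via the $2n-3$ corners and the bridge case realizing the convolution. For diagrams your underlying decomposition is also the paper's (peel off either an indecomposable factor sitting just after the left endpoint of the root chord, or the root chord itself), but your case criterion --- existence of a proper initial closed block --- is a sharper formulation than the paper's stated dichotomy ``deletion of the root chord makes the diagram decomposable / leaves it indecomposable'', and it correctly handles a boundary case that the paper's wording glosses over. Namely, when $D$ is a root chord nested over an indecomposable diagram $E$ of size $n-1$, deleting the root chord yields the indecomposable $E$, so by the paper's literal split $D$ would fall into the second case; yet $D$ cannot be recovered by inserting a root chord into one of the $2n-3$ intervals of $E$ (the right endpoint would have to land after the last point), and instead $D$ is produced by the first case's reconstruction with the one-chord diagram as the remainder. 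Your split, by allowing the block to reach $j=2n-2$, assigns exactly these $b_{n-1}$ nestings to the product term, so your two operations are genuinely mutually inverse on each case, whereas under the paper's phrasing the misassigned nestings merely cancel between the two summands so that the total still comes out right. The supporting verifications you flag --- minimality of the block forcing its indecomposability, indecomposability of the remainder via the shielding argument, the right endpoint being strictly interior when no block exists, and disjointness of the two images --- are precisely the checks needed, and all of them go through.
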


\begin{proof} The decompositions for both classes, which we describe in this proof, are illustrated by Figure~\ref{fig:decomp2}.

\begin{figure}[!ht]
\begin{center}
\begin{tabular}{c}
{ \begin{minipage}{0.2\textwidth}
\small
\begin{center}
indecomposable diagrams
\end{center} \end{minipage}
} \   $\left\{\ \ \begin{minipage}{0.62\textwidth}
\begin{center}
\includegraphics[width=0.95\textwidth]{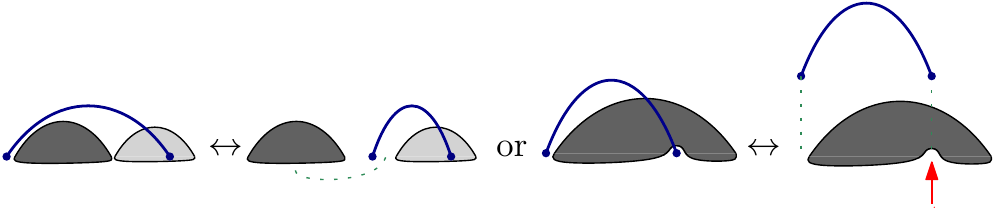} 
\end{center}
\end{minipage}
\right.$ \ 
\\ \ \\
{ \begin{minipage}{0.2\textwidth}
\small
\begin{center}
 rooted maps
\end{center} \end{minipage}
} \   $\left\{\ \ \begin{minipage}{0.62\textwidth}
\begin{center}
\includegraphics[width=0.95\textwidth]{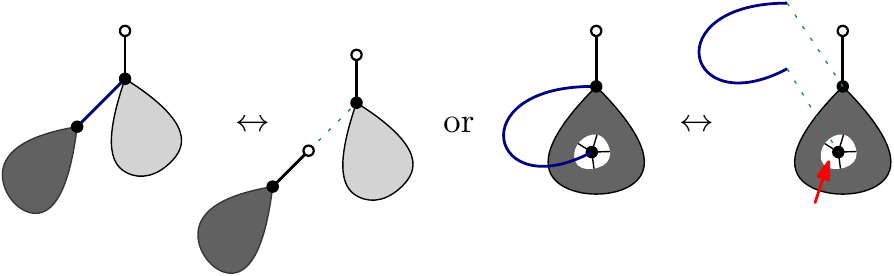} 
\end{center}
\end{minipage}
\right.$\ \ \ \ 
\end{tabular}
\end{center}
\caption{Schematic decomposition of indecomposable diagrams and  maps.}
\label{fig:decomp2}
\end{figure}

\noindent \textbf{Indecomposable diagrams.} For an indecomposable diagram $D$ of size \mbox{$n>1$}, there are two exclusive possibilities.
\begin{itemize}
\item \textbf{The deletion of the root chord makes the diagram decomposable}, i.e. the resulting diagram is the concatenation of several indecomposable diagrams. Let $D_1$ be the first one of them, and $D_2$ the diagram $D$ where we have removed $D_1$ while leaving the root chord in place. The transformation is reversible; we can recover $D$ from $D_1$ and $D_2$ by putting $D_1$ in the leftmost interval (after the left endpoint of the root chord) of $D_2$. Thus, if $D_1$ has size $k$, the number of such diagrams $D$ is $b_k b_{n-k}$.
\item \textbf{The deletion of the root chord induces another indecomposable diagram $D'$.} Then $D'$ has size $n-1$ and we can recover $D$ via a root chord insertion. As mentioned in the proof of Proposition~\ref{prop:recurrence}, a chord diagram with $k$ chords has $2k-1$ intervals, so there are $2n-3$ different ways to insert a root chord in $D'$.
Thus, the number of such diagrams is $(2n-3) b_{n-1}$. 
\end{itemize}
The conjunction of both cases gives Equation~\ref{eq:recind}.

\noindent \textbf{Maps.}
The decomposition we give is based on Tutte's classic root edge removal procedure, extended to the arbitrary genus case \cite{ABmaps,Eynard2016}.
We distinguish again two exclusive possibilities for a rooted  map of size $n>1$.
\begin{itemize}
\item \textbf{The root edge is a bridge.} In other words, $M$ joins two different maps $M_1$ and $M_2$ via a bridge. If $M_1$ has size $k$, there are then $b_k b_{n-k}$ such maps.
\item \textbf{The root edge is not a bridge.} Then $M$ is obtained from a map of size $n-1$ by a root edge insertion. There are $2n-3$ ways to insert a root edge in a map of size $n-1$ (this corresponds to the number of corners). Thus, the number of such maps is $(2n-3) b_{n-1}$. 
\end{itemize}
Again, Equation~\ref{eq:recind} results from the consideration of these two cases.
\end{proof}

\section{Basic operations}
\label{sec:operations}

We define in this section several basic operations on chord diagrams and combinatorial maps, which will be used in Section~\ref{sec:bijection} to formally construct bijections between connected diagrams and bridgeless maps, and between indecomposable diagrams and general maps.

\subsection{Operations on chord diagrams}


\begin{definition}[Operations $\Rootdiag$ and $\Diagins$]
Let $D$ be a diagram of size $n$, $k$ an integer $1 \leq k \leq 2n-1$, and $D'$ an arbitrary diagram.
We write $\RootChord k(D)$ to denote the diagram obtained from $D$ by inserting a new root chord whose right endpoint ends in the $k$th interval of $D$ (from left to right), and $\DiagIns {D'} k(D)$ to denote the diagram obtained from $D$ by inserting the diagram $D'$ into the $k$th interval of $D$.  (Figure \ref{fig:diagraminsertion} shows examples of both operations.)
\end{definition}

\begin{figure}[!ht]
\centering
\includegraphics[width = \textwidth]{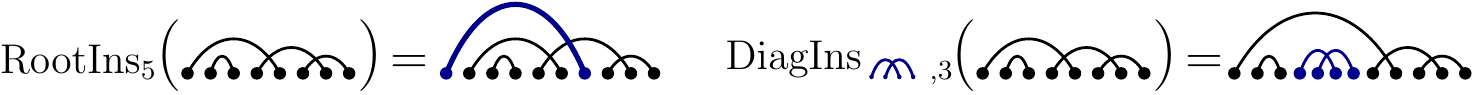}
\caption{Illustration of operations $\Rootdiag$ and $\Diagins$.}
\label{fig:diagraminsertion}
\end{figure}

The following technical lemma describes an important commutation relation between $\Rootdiag$ and $\Diagins$.

\begin{lemma}
Let $k$ and $\ell$ be two integers and $D$ an indecomposable chord diagram. We have the commutation rules
\begin{align}
 \DiagIns D {\ell} \circ \RootChord k &= \RootChord k \circ \DiagIns D {\ell-2},  &\textrm{ if }k \leq \ell - 2,   \label{eq:com1}\\ 
 \DiagIns D {\ell} \circ \RootChord k
  & =  \RootChord {k+ 2 |D|} \circ \DiagIns D {\ell-1},   & \textrm{ if } 1 \leq \ell - 1 \leq k, 
    \label{eq:com2}
\end{align}
where $|D|$ is the number of chords in $D$.
\label{lem:commut}
\end{lemma}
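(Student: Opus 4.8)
The plan is to prove both identities by the same method: read each side as a \emph{composite operation} on an arbitrary input diagram $E$, and show the two composites yield literally the same diagram by tracking how the point- and interval-labellings are transformed. I would fix $E$ of size $n$ with points $0 < \cdots < 2n-1$, so that interval $j$ lies between points $j-1$ and $j$, and first record explicitly the effect of each elementary operation. Applying $\RootChord{k}$ prepends a new minimal point (the left endpoint of the new root chord) and drops its right endpoint $s$ inside interval $k$; consequently interval $j$ of $E$ becomes interval $j+1$ for $j<k$, interval $k$ is \emph{split} into the two intervals $k+1$ and $k+2$ (bounded by $s$), and interval $j$ becomes interval $j+2$ for $j>k$. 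Dually, $\DiagIns{D}{\ell}$ inserts the $2|D|$ points of $D$ into interval $\ell$, so every interval strictly left of $\ell$ keeps its index while every interval to the right is shifted up by $2|D|$.

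With this bookkeeping in place, the whole content of the lemma becomes the single geometric observation that the two operations act on disjoint regions of the line, the index corrections merely compensating for the points each operation inserts to the left of the other's target. I would organize this into the two cases of the hypothesis according to whether $D$ lands to the right or to the left of $s$. In the regime $k \le \ell-2$ of \eqref{eq:com1}, $D$ is inserted to the right of $s$: the root chord contributes two points (its left endpoint and $s$) to the left of $D$'s target, which is exactly why the target index drops from $\ell$ to $\ell-2$ when the root chord is applied second, while $D$ lies to the right of interval $k$ and so does not disturb the root chord's landing interval, which stays $k$. In the regime $1 \le \ell-1 \le k$ of \eqref{eq:com2}, $D$ is inserted to the left of $s$ but right of the root chord's left endpoint: now only one point lies to the left of $D$'s target (explaining the drop from $\ell$ to $\ell-1$), whereas the $2|D|$ points of $D$ now precede $s$, which is precisely why the root chord's target index grows from $k$ to $k+2|D|$.

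The step I expect to be the genuine obstacle — and the only place where care is really needed — is the treatment of the boundary values $\ell=k+2$ in \eqref{eq:com1} and $\ell=k+1$ in \eqref{eq:com2}. These are exactly the values at which the relevant target interval of $F=\RootChord{k}(E)$ is one of the two halves into which $\RootChord{k}$ splits the original interval $k$ (the right half when $\ell=k+2$, the left half when $\ell=k+1$), so the clean ``disjoint regions'' picture is tightest there. I would settle these by writing out both resulting diagrams point-by-point: for $\ell=k+2$ one checks that inserting $D$ between $s$ and the former point $k$ agrees with first inserting $D$ into interval $k$ of $E$ and then dropping $s$ into the leftmost sub-interval so created; for $\ell=k+1$ one checks symmetrically that $D$ lands between the former point $k-1$ and $s$. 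Once the endpoints of the root chord and the block $D$ are seen to occupy the same relative positions in both constructions, the diagrams coincide as ordered matchings and each identity follows. I would finally remark that the argument never uses indecomposability of $D$, the relations being purely statements about insertion into a linear order, and I would only record the elementary range conditions (e.g. $\ell$ at most the number of intervals of $F$) needed for all four operations to be defined.
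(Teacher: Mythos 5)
Your proposal is correct and takes essentially the same approach as the paper's proof: both arguments reduce the lemma to tracking how interval indices are relabelled by the two operations (the $i$th interval becoming the $(i+1)$th or $(i+2)$th after $\RootChord k$, and shifting by $2|D|$ past the insertion point after $\DiagIns D \ell$), then noting the two insertions act on disjoint regions of the linear order. Your explicit handling of the split interval and the boundary cases $\ell = k+2$ and $\ell = k+1$ merely spells out details the paper leaves implicit, and your observation that indecomposability of $D$ is never actually used is accurate.
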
 

\begin{proof} Each time we $(i)$ insert a new root chord into a diagram $C$ and then $(ii)$ insert a diagram into $C$, we can choose to do it in the opposite order -- $(ii)$ then $(i)$ -- as long as the diagram is not inserted into the first interval. The only things we have to take care of are the positions where the insertions occur, which can change after a root chord insertion or a diagram insertion. Thus, the $i$th leftmost interval becomes, after an operation $\RootChord k$, the $(i+1)$th leftmost interval if $i < k$, and the $(i+2)$th one if $i > k$. Similarly, after an operation $\DiagIns D \ell$, the $i$th leftmost interval remains the $i$th leftmost interval if $i < \ell$, and will become the $(i + 2|D|)$th leftmost interval if $i > \ell$.
Equations~\eqref{eq:com1} and~\eqref{eq:com2} follow from this analysis.
\end{proof}

Finally, we define a \textit{boxed product}\footnote{The terminology comes from Flajolet and Sedgewick~\cite[p. 139]{Flajolet-Sedgewick}. It means that we insert a combinatorial object into another at a particular place.	} for connected diagrams, which exactly corresponds to the combination of two connected diagrams described in the proof of Proposition~\ref{prop:recurrence}. 

\begin{definition}[Boxed product for connected diagrams]
Let $C_1$ and $C_2$ be two connected diagrams, and $i$ be an integer between $1$ and $2|C_2|-1$, where $|C_2|$ is the size of $C_2$. The connected diagram $C_1 \star_i C_2$ is defined as
\begin{align*}
\RootChord i (C_2) && \textrm{ if }C_1\textrm{ is the one-chord diagram,} \\
\RootChord {i+\ell} \left( \DiagIns {C_2} \ell (\widehat{C_1}) \right) && \textrm{ if }C_1\textrm{ is of the form }\RootChord \ell (\widehat{C_1}).
\end{align*}
\label{def:prod}
\end{definition}

Examples of this operation are shown in Figure~\ref{fig:diagramproduct}. Let us recall, as used in the proof of Proposition~\ref{prop:recurrence}, that the star product induces a bijection between connected diagrams $C$, and triples $(C_1,C_2,i)$ where $C_1$ and $C_2$ are two connected diagrams, and $i \, \in \, \{1,\dots,2|C_2|-1\}$.

Other similar definitions are both possible and useful.  We will define a variant of the boxed product for some technical work in 
Subsection~\ref{subsec bin tree} (see Definition~\ref{def var box}).

\begin{figure}[!ht]
\centering
\includegraphics[width = \textwidth]{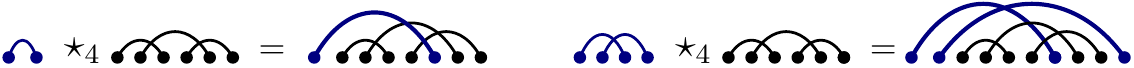}
\caption{Illustration of the boxed product for connected diagrams.}
\label{fig:diagramproduct}
\end{figure}

\subsection{The Bridge First Labeling of a map}
\label{ss:labeling}


\begin{figure}[!ht]
\centering
\includegraphics[scale=1]{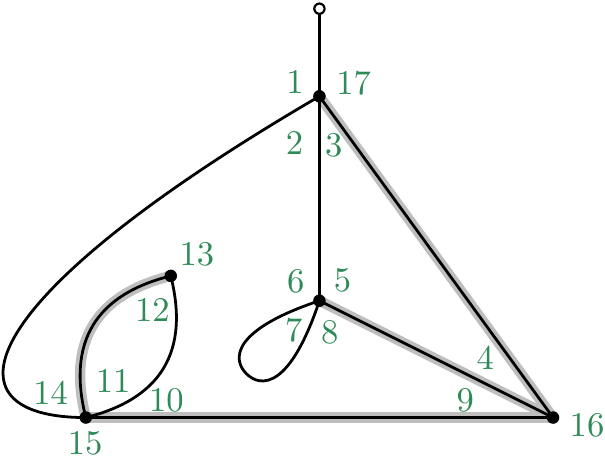}
\caption{The Bridge First Labeling of a map. The overlined edges correspond to the edges which become bridges during the algorithm described in Subsection \ref{ss:labeling}. Alternatively, they form the spanning tree associated to the  rightmost DFS.}\label{corner_numbering}
\end{figure}

Given a rooted map $M$ (potentially with dangling edges), we describe in this subsection a way to label the corners of $M$, which we call the \textit{Bridge First Labeling} of $M$. We choose this labeling because we want the operations of insertions in maps to satisfy an analogue of Lemma~\ref{lem:commut}.

The Bridge First Labeling is given by the following algorithm.
\begin{itemize}
\item The first corner we consider is the root corner. We label it by $1$.
\item Assume the current corner is labeled by $k$, and consider the (potentially dangling) edge $e$ adjacent to this corner in the counterclockwise order.  There are three possibilities:
\begin{itemize}
\item The edge $e$ is a bridge. Go along this edge to the next corner. 
Label this corner by $k+1$. 
\item The edge $e$ is a dangling edge. Go to the following corner in the counterclockwise order, and label it by $k+1$.
\item The edge $e$ is neither dangling nor a bridge. Cut $e$ into two dangling edges. Go to the following corner in the counterclockwise order, and label it by $k+1$. 
\end{itemize}
\item The algorithm stops when we reach the root.
\end{itemize}

An example of a run of this algorithm has been started in Figure~\ref{fig:bfl-algo}.

\begin{figure}[!ht]
\centering
\includegraphics[width = \textwidth]{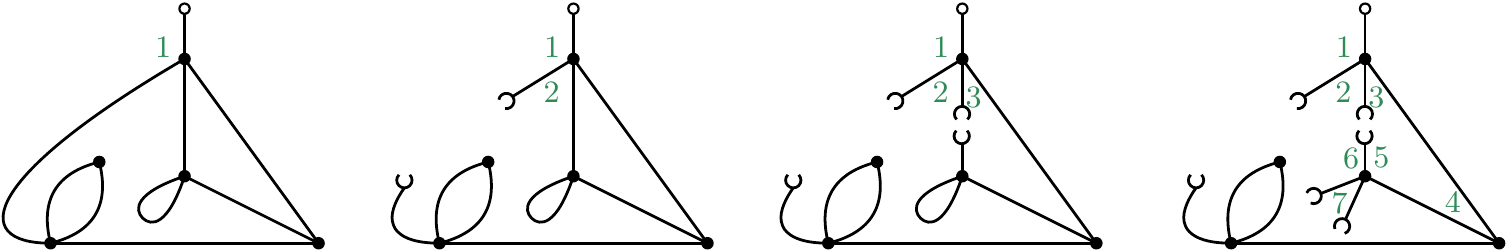}
\caption{The first steps of the Bridge First Labeling of the map of Figure~\ref{corner_numbering}.}
\label{fig:bfl-algo}
\end{figure}

Alternatively, the Bridge First Labeling can be deduced from the tour\footnote{in the sense of \cite{BernardiTutte}: we visit every half-edge, starting by the root. If a half-edge does not belong to the spanning tree, we go to the next half-edge in counterclockwise order; it a half-edge does belong to it, we follow the associated edge.} of the spanning tree induced by the  Depth First Search (DFS) of the map where we favor the rightmost edges (call this a \textit{rightmost DFS}). The notion of rightmost DFS will return in Subsection~\ref{subsec new dfs}.

\subsection{Operation in maps}

Now that we have set a suitable way to label the corners of a map, we define two analogues of $\Rootdiag$ and $\Diagins$ for maps:
\begin{definition}[Operations $\Rootmap$ and $\Mapins$]
Let $M$ be a map of size $n$, $k$ an integer $1 \le k \le 2n-1$, and $M'$ an arbitrary map.
We write $\RootChord k(M)$ to denote the map obtained from $M$ by adding an edge linking the root corner and the $k$th corner of the Bridge First Labeling of $M$.
We write $\MapIns {M'}k(M)$ to denote the insertion of $M'$ in $M$ via a bridge at the $k$th corner of the Bridge First Labeling of $M$.
\end{definition}
Examples are given by Figure \ref{fig:mapinsertion}.

\begin{figure}[!ht]
\centering
\includegraphics[width = \textwidth]{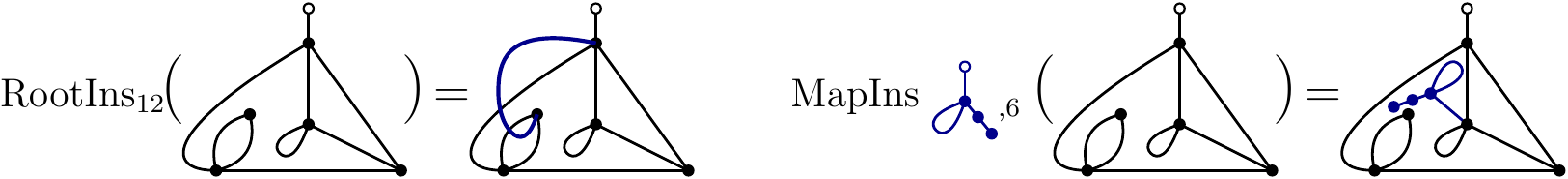}
\caption{Illustration of operations $\Rootmap$ and $\Mapins$.}
\label{fig:mapinsertion}
\end{figure}

The next lemma explains why we have chosen the Bridge First Labeling as a canonical way to number the corners of a map: the operations $\Rootmap$ and $\Mapins$ satisfy an analogous commutation relation as the corresponding operations $\Rootdiag$ and $\Diagins$ on diagrams (Lemma~\ref{lem:commut}). Numerous statistics will be thus preserved when we transform a map into a diagram.

\begin{lemma}
Let $k$ and $\ell$ be two integers, and $M$ be a combinatorial map (with only one dangling edge, marking the root). We have the commutation rules
\begin{align}
 \MapIns M {\ell} \circ \RootEdge k &= \RootEdge k \circ \MapIns M {\ell-2},  &\textrm{ if }k \leq \ell - 2,   \\ 
 \MapIns M {\ell} \circ \RootEdge k
  & =  \RootEdge {k+ 2 |M|} \circ \MapIns M {\ell-1},   & \textrm{ if } 1 \leq \ell - 1 \leq k, 
\end{align}
where $|M|$ is the number of edges in $M$.
\label{lem:commut2}
\end{lemma}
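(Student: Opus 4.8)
The plan is to reduce the statement to a pair of renumbering rules for the Bridge First Labeling and then to repeat, essentially verbatim, the bookkeeping argument from the proof of Lemma~\ref{lem:commut}. Indeed, the operations $\Rootmap$ and $\Mapins$ were tailored precisely so that their effect on the labels of the corners already present in a map mimics the effect of $\Rootdiag$ and $\Diagins$ on the intervals of a diagram. So the heart of the matter is to establish two claims about how corner labels move.

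First, for a root edge insertion $\RootEdge k$: the new (non-bridge) root edge creates two new corners, and a corner bearing label $i$ before the operation bears label $i+1$ afterwards if $i < k$, and label $i+2$ if $i > k$. Second, for a map insertion $\MapIns M \ell$: a corner with label $i < \ell$ keeps its label, while a corner with label $i > \ell$ receives label $i + 2|M|$, the $2|M|$ freshly created labels being filled by the corners internal to the inserted copy of $M$ together with the corner split off at position $\ell$. These are exactly the renumbering rules stated for intervals in the proof of Lemma~\ref{lem:commut}, with $2|M|$ playing the role of $2|D|$.

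I would justify the claims using the ``bridge first'' priority, most transparently through the tour / rightmost-DFS description of the labeling. When the tour reaches the corner at which the bridge to $M$ has been grafted, that bridge is traversed before anything else; hence all of $M$ is explored---contributing exactly $2|M|$ corners in its own Bridge First order, which is well defined because $M$ is assumed to carry a single dangling edge marking its root---before the tour backtracks across the bridge and resumes in the ambient map. This forces the corners of $M$ to form one contiguous block of labels immediately after $\ell$, producing the uniform $+2|M|$ shift beyond position $\ell$ and leaving the labels below $\ell$ (visited before the bridge) untouched. For $\RootEdge k$, the two half-edges of the inserted root edge are handled by the cutting step of the algorithm and account for the global $+1$ shift together with the extra $+1$ incurred past the target corner $k$, exactly as the left and right endpoints of a root chord behave under $\RootChord k$.

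With these renumbering rules in hand, the two identities follow by the same case analysis as in Lemma~\ref{lem:commut}: when $k \le \ell - 2$ the map insertion happens strictly beyond the root edge insertion, so the two operations act on disjoint regions and commute once the target of $\Mapins$ is corrected from $\ell$ to $\ell - 2$; when $1 \le \ell - 1 \le k$ the insertion of $M$ falls at or before the target corner $k$, which therefore migrates to $k + 2|M|$, while the target of $\Mapins$ is corrected from $\ell$ to $\ell - 1$. The main obstacle is the rigorous verification of the two renumbering claims directly against the Bridge First Labeling algorithm: one must check that neither the edge-cutting step nor the grafting of the bridge disturbs the relative traversal order of the corners left in place, so that the only changes to the labels are the predicted shifts. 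Once this locality is confirmed, the remainder is the purely arithmetic bookkeeping already carried out for diagrams.
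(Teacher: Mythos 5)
Your proposal is correct and follows essentially the same route as the paper's proof: both reduce the lemma to the two renumbering rules for the Bridge First Labeling (a shift of $+1$ before and $+2$ after position $k$ under $\RootEdge k$; labels $\le \ell$ fixed and a uniform shift of $+2|M|$ beyond $\ell$ under $\MapIns M \ell$, since the bridge-first priority forces the grafted copy of $M$ to be traversed as one contiguous block), and then conclude by the same arithmetic as in Lemma~\ref{lem:commut}. Your justification via the tour of the rightmost-DFS spanning tree is just the paper's alternative description of the same labeling, and your accounting of the $2|M|$ new labels (the $2|M|-1$ internal corners plus the corner split at position $\ell$) is, if anything, slightly more careful than the paper's.
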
 

\begin{proof}
Similarly as in Lemma~\ref{lem:commut}, we have to understand how a root edge insertion or a diagram insertion affects the labels of a map. 

The edge added by the operation $\RootEdge k$ will be necessarily cut in half at the start of the Bridge First Labeling algorithm. The rest of the tour will be unchanged, except for an extra step at the $k$th position, which is the visit of the second dangling edge resulting from the root edge. Therefore,
a corner labeled by $i$ with $i < k$ will carry the label $i+1$ (the first dangling edge has been visited but not the second one),  
while a corner labeled by $i$ with $i < k$ will carry the label $i+2$.

Concerning the operation $\MapIns M \ell$, it will only affect the labels of the corners which are after $\ell$. Indeed, after the $\ell$th step, we have to visit the entire map $M$, which counts $2|M|$ corners. Thus, a corner with label $i > \ell$ will carry the label $i + 2|M|$ after the  operation $\MapIns M \ell$.
\end{proof}

Finally, we define a boxed product for bridgeless maps. As for connected diagrams, this product describes the combination between two bridgeless maps which is stated in the proof of Proposition~\ref{prop:recurrence}. It is the formal analog of Definition~\ref{def:prod}.

\begin{definition}[Boxed product for bridgeless maps] Let $M_1$ and $M_2$ be two bridgeless maps, and $i$ an integer  between $1$ and $2|M_2|-1$, where $|M_2|$ is the size of $M_2$. The bridgeless map $M_1 \star_i M_2$ is defined as
\begin{align*}
\RootEdge i (M_2) && \textrm{ if }M_1\textrm{ is reduced to a root,} \\
\RootEdge {i+\ell} \left( \MapIns {M_2} \ell (\widehat{M_1}) \right) && \textrm{ if }M_1\textrm{ is of the form }\RootEdge \ell (\widehat{M_1}).
\end{align*}
\label{def:prod2}
\end{definition}

Once again, following the proof of Proposition~\ref{prop:recurrence}, for each bridgeless map $M$ of size $>1$, there exists a unique triple $(M_1,M_2,i)$ where $M_1$ and $M_2$ are  two bridgeless maps such that $M = M_1 \star_i M_2$. Examples of this boxed product are shown in Figure~\ref{fig:mapproduct}.

\begin{figure}[!ht]
\centering
\includegraphics[width = 0.7\textwidth]{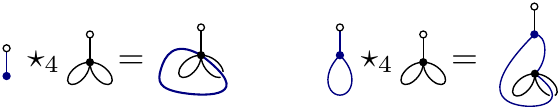}
\caption{Illustration of the boxed product for bridgeless maps.}
\label{fig:mapproduct}
\end{figure}

\section{Description of the main bijections}
\label{sec:bijection}

With all the tools we have introduced, it is now easy to construct explicit bijections between connected diagrams and bridgeless maps.


\subsection{Natural bijections}

We establish first a bijection between bridgeless maps and connected diagrams, which we denote $\theta$. 
\begin{definition}[Bijection $\theta$ between bridgeless maps and connected diagrams]
Let $M$ be a bridgeless map.
\begin{itemize}
\item If $M$ is reduced to a root, then $\theta(M)$ is the one-chord diagram.
\item Otherwise, $M$ is of the form $M_1 \star_i M_2$. Then $\theta(M)$ is equal to  $\theta(M_1) \star_i \theta(M_2)$, where $\theta(M_1)$ and $\theta(M_2)$ are computed recursively.
\end{itemize}
\label{def:theta}
\end{definition}

\begin{figure}[!ht]
\centering
\includegraphics[scale=1.3]{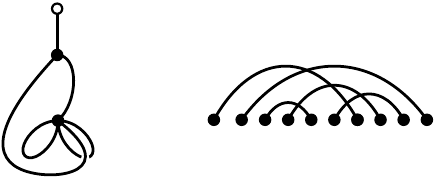}
\caption{A bridgeless map and a connected diagram in bijection under $\theta$.}
\label{fig:thetaex}
\end{figure}

The mapping $\theta$ is provably bijective since we can define its inverse $\theta^{-1}$ by symmetry.
Figure~\ref{fig:thetaex} presents a  bridgeless map and a connected diagram in bijection under $\theta$, the decompositions of which are shown by Figures~\ref{fig:diagraminsertion} and ~\ref{fig:mapinsertion}. 




As mentioned in the introduction, it was already known that rooted maps are in bijection with indecomposable diagrams \cite{OdMRtrans, OdMRencoding, Cori2009}. However, this known bijection does not restrict to a bijection between bridgeless maps and connected diagrams, so we will now give one which does.

\begin{definition}[Bijection $\phi$ between maps and indecomposable diagrams]\label{defn:phi}
Let $M$ be a combinatorial map. We define here the indecomposable diagram $\phi(M)$ as follows. (Figure~\ref{fig:phi2} illustrates this definition.)
\begin{itemize}
\item If $M$ is reduced to the root, then $\phi(M)$ is the one-chord diagram.
\item Assume that the root edge of $M$ is a bridge, i.e. $M$ is of the form $\MapIns {M_\downarrow} 1 (M_\uparrow)$. Then $\phi(M)$ is defined as 
\[\phi(M)= \DiagIns {\phi\left(M_\downarrow\right)} 1 \left(\phi(M_\uparrow)\right).\] (The diagrams $\phi(M_\downarrow)$ and $\phi(M_\uparrow)$ are defined recursively.)
\item Assume that the root edge of $M$ is not a bridge, i.e. $M$ is of the form $M = \RootEdge k (M')$. Then $\phi(M)$ is defined as
\[\phi(M)=\RootChord k \left(\phi(M')\right).\]
(The diagram $\phi(M')$ is defined recursively.)
\end{itemize}
\label{def:phi2}
\end{definition}

\begin{figure}[!ht]
\centering
\includegraphics[width = \textwidth]{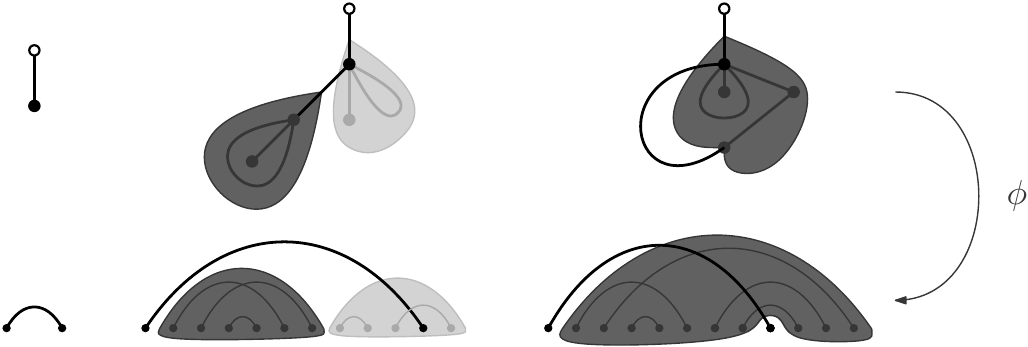}
\caption{How $\phi$ is defined. }\label{fig:phi2}
\end{figure}

Remarkably, the two previous  bijections are compatible with each other.
\begin{theorem} The bijection $\phi$ is a bijection between rooted maps and indecomposable diagrams whose restriction to bridgeless maps is $\theta$. (Therefore, $\phi$ sends bridgeless maps to connected diagrams.)
\label{theo:restriction}
\end{theorem}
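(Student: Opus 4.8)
The plan is to prove that $\phi$ restricted to bridgeless maps equals $\theta$, and then argue that the stated consequences follow. The two bijections are both defined recursively on the size of the map, so the natural strategy is \textbf{induction on $|M|$}, comparing the recursive clauses of Definition~\ref{def:theta} and Definition~\ref{def:phi2}. The base case is immediate: when $M$ is reduced to the root, both $\theta(M)$ and $\phi(M)$ are declared to be the one-chord diagram.

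For the inductive step I would take a bridgeless map $M$ of size $>1$ and expose it in \emph{both} decomposition formats. On the one hand, since $M$ is bridgeless its root edge is not a bridge, so Definition~\ref{def:phi2} puts $M = \RootEdge k(M')$ and gives $\phi(M) = \RootChord k(\phi(M'))$. On the other hand, Definition~\ref{def:theta} writes $M = M_1 \star_i M_2$ and gives $\theta(M) = \theta(M_1)\star_i\theta(M_2)$. The heart of the argument is to reconcile these two views. Unfolding the boxed product via Definition~\ref{def:prod2}, when $M_1$ is of the form $\RootEdge \ell(\widehat{M_1})$ we have $M = \RootEdge{i+\ell}(\MapIns{M_2}{\ell}(\widehat{M_1}))$, which identifies the $\Rootmap$-exponent $k = i+\ell$ and the residual map $M' = \MapIns{M_2}{\ell}(\widehat{M_1})$ (and analogously, when $M_1$ is reduced to a root, $k=i$ and $M' = M_2$). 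So I need to show $\phi$ commutes with the boxed-product structure: concretely, that $\phi(\MapIns{M_2}{\ell}(\widehat{M_1})) = \DiagIns{\theta(M_2)}{\ell}(\theta(\widehat{M_1}))$ or the appropriate analogue, after which matching up $\RootChord k$ on both sides yields $\phi(M) = \theta(M_1)\star_i\theta(M_2) = \theta(M)$ by the inductive hypothesis applied to the strictly smaller pieces $M_1$, $M_2$, $\widehat{M_1}$.

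The main obstacle will be precisely this commutation/compatibility lemma linking $\phi$ to $\Mapins$ and the boxed product. The clean way to handle it is to prove, again by induction, an auxiliary statement that $\phi(\MapIns{M'}{\ell}(M)) = \DiagIns{\phi(M')}{\ell}(\phi(M))$ for all maps, i.e.\ that $\phi$ intertwines map-insertion with diagram-insertion just as its top clause does for $\ell=1$ with a bridge at the root. The delicate point is bookkeeping the corner labels under the Bridge First Labeling: pushing a $\Mapins$ or $\Rootmap$ through the recursive unfolding of $\phi$ changes which corner carries which index, and this is exactly where Lemma~\ref{lem:commut2} (the commutation rules for $\Rootmap$ and $\Mapins$) and its diagram counterpart Lemma~\ref{lem:commut} must be invoked in tandem, so that the index shifts on the map side and the diagram side agree. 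Once that intertwining is established, the equality $\phi|_{\text{bridgeless}} = \theta$ follows, and since $\theta$ is a bijection onto connected diagrams while $\phi$ is a bijection onto indecomposable diagrams, the final parenthetical claim that $\phi$ carries bridgeless maps to connected diagrams is automatic.
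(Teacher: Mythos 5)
Your proposal is correct in substance, and it takes a genuinely different route from the paper. The paper does not induct directly on bridgeless maps: it first proves a canonical decomposition (Proposition~\ref{prop:decomposition}) of an arbitrary map into its bridgeless root component $M_B$ together with a sorted sequence of insertions $\MapIns{M_1}{i_1}\circ\dots\circ\MapIns{M_k}{i_k}(M_B)$, uses this to define an intermediate bijection $\overline\theta$ extending $\theta$ (which restricts to $\theta$ trivially, since $k=0$ for bridgeless maps, and is bijective by symmetry), and then proves $\phi=\overline\theta$ by induction on all maps in three cases (Proposition~\ref{prop:phi=theta}), sliding the $\Rootdiag$/$\Rootmap$ operators through the chains of insertions via Lemmas~\ref{lem:commut} and~\ref{lem:commut2}. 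You instead bypass $\overline\theta$ entirely and compare the two recursions head-on for bridgeless $M$, reducing everything to the single intertwining identity $\phi(\MapIns{M'}{\ell}(M)) = \DiagIns{\phi(M')}{\ell}(\phi(M))$, which generalizes the bridge clause of Definition~\ref{def:phi2} from $\ell=1$ to all $\ell$. What each buys: the paper's detour delivers more than the bare statement --- the description $\phi=\overline\theta$ is itself the ``extension of $\theta$'' result advertised in the introduction and is reused later (e.g.\ in the proof sketch of Proposition~\ref{prop:translation}), and bijectivity of $\phi$ comes for free from that of $\overline\theta$; your route is leaner for the restriction claim, and your intertwining lemma is arguably a cleaner-to-state invariant of $\phi$.

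Two points need attention to make your plan airtight. First, in the induction proving the intertwining lemma, the case where the root edge of $M$ is itself a bridge, $M=\MapIns{M_\downarrow}{1}(M_\uparrow)$ with $\ell\ge 2$, is \emph{not} covered by Lemmas~\ref{lem:commut} and~\ref{lem:commut2}: those only commute insertions past \emph{root} insertions. You need the additional (true, same-flavor) compatibility of two nested insertions, namely $\MapIns{M'}{\ell}\circ\MapIns{M_\downarrow}{1} = \MapIns{\MapIns{M'}{\ell-1}(M_\downarrow)}{1}$ when $\ell$ falls among the corners of $M_\downarrow$ in the Bridge First Labeling, and $\MapIns{M'}{\ell}\circ\MapIns{M_\downarrow}{1} = \MapIns{M_\downarrow}{1}\circ\MapIns{M'}{\ell-2|M_\downarrow|}$ otherwise, together with the matching $\Diagins$/$\Diagins$ identities on intervals; these must be checked against the labeling algorithm just as the cited lemmas were. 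Second, two smaller matters: your final step invokes the inductive hypothesis ``applied to $\widehat{M_1}$'', but $\widehat{M_1}$ need not be bridgeless, so the hypothesis does not apply to it --- instead apply it to $M_1$ and use the uniqueness of the decomposition $\theta(M_1)=\phi(M_1)=\RootChord{\ell}(\phi(\widehat{M_1}))$ to identify $\widehat{C_1}=\phi(\widehat{M_1})$ in Definition~\ref{def:prod2}'s diagram analogue; and bijectivity of $\phi$, which the theorem also asserts, is taken as given in your write-up --- it does follow routinely because the three clauses of Definition~\ref{def:phi2} implement, on both sides, the complete and mutually exclusive decompositions underlying Proposition~\ref{prop:recurrence2}, but a sentence to that effect (or the paper's $\overline\theta$ argument) is required.
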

The proof will be postponed for the next subsection.

\subsection{Extension of $\theta$ and equality between bijections}

In this subsection, we give another description of $\phi$, which is directly based on $\theta$.
To do so, we again exploit the fact that rooted maps and indecomposable diagrams have equivalent decompositions, but now in terms of bridgeless maps and connected diagrams. 
The next proposition states those decompositions for both families, the principle of which is illustrated in Figure~\ref{fig:decomposition}.

\begin{proposition}\textbf{Decomposition of diagrams.} Any indecomposable diagram $D$ can be uniquely decomposed as a connected diagram $C$ and a sequence $(D_1,i_1),\dots,$ $(D_k,i_k)$  where each $D_j$ is an indecomposable diagram and $i_j$ is a integer such that $i_1 \leq \dots \leq i_k$ and
\[D = \DiagIns {D_1} {i_1} \circ \DiagIns {D_2} {i_2} \circ \dots \circ \DiagIns {D_k} {i_k} (C).\]
\textbf{Decomposition of maps.} Any map $M$ can be uniquely decomposed as a bridgeless map $M_B$ and a sequence $(M_1,i_1),\dots,(M_k,i_k)$  where each $M_j$ is a map and $i_j$ is a integer such that $i_1 \leq \dots \leq i_k$ and
\[M =  \MapIns {M_1} {i_1} \circ \MapIns {M_2} {i_2} \circ \dots \circ \MapIns {M_k} {i_k} (M_B).\]
\label{prop:decomposition}
\end{proposition}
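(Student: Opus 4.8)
The plan is to prove the two decompositions in parallel, exploiting the fact that the diagram operation $\Diagins$ and the map operation $\Mapins$ obey formally identical index-shift and commutation rules (Lemmas~\ref{lem:commut} and~\ref{lem:commut2}). I would treat the diagram statement in full and then transcribe the argument to maps, replacing ``weakly connected component of the root chord in the intersection graph'' by ``$2$-edge-connected block containing the root corner'' and ``interval'' by ``corner of the Bridge First Labeling''. In both cases the strategy is to exhibit an explicit inverse to the reassembly map $(C,(D_1,i_1),\dots,(D_k,i_k)) \mapsto \DiagIns{D_1}{i_1}\circ\dots\circ\DiagIns{D_k}{i_k}(C)$ and to check that the two maps are mutually inverse, which establishes existence and uniqueness at once.

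For existence on the diagram side, given an indecomposable $D$ I would let $C$ be the subdiagram induced by the chords in the weakly connected component of the root chord in the intersection graph; this $C$ is connected by construction. The key geometric observation is that every remaining chord has both endpoints inside a single interval of $C$: if a chord straddled two intervals of $C$ it would cross some chord of $C$ and would therefore lie in the root component, a contradiction. Hence the non-$C$ chords partition according to which interval of $C$ they occupy, and within each interval they form a subdiagram. Each such subdiagram factors uniquely as a concatenation of indecomposable diagrams (the unique factorization of any diagram into indecomposable factors underlying Proposition~\ref{prop:recurrence2}); listing these factors interval by interval, in increasing interval order and left to right within an interval, produces a sequence with $i_1\le\dots\le i_k$. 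A direct check against the definition of $\Diagins$ confirms that reassembling in this order returns $D$.

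For uniqueness I would argue that each ingredient is intrinsic to $D$. First, $C$ is forced: inserting a diagram into an interval nests all of its chords between two adjacent points, so no inserted chord ever crosses a chord of $C$ or of another inserted block; hence $C$ is exactly the root component of $D$, the same for every normal-form decomposition. Once $C$ is fixed, the set of inserted chords and the interval each occupies are determined, and the requirements that each $D_j$ be indecomposable and that $i_1\le\dots\le i_k$ pin down the sequence; Lemma~\ref{lem:commut} is what guarantees that any other grouping or ordering of the same insertions rewrites to this sorted normal form, so distinct normal forms cannot yield the same $D$.

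The hard part will be the index bookkeeping that makes ``$i_1 \le \dots \le i_k$'' a genuine canonical form rather than a convenient labeling. Inserting $D_j$ at interval $i_j$ shifts all later interval labels by $2|D_j|$, so I must fix a convention (insert the largest-index blocks first, innermost in the composition) and verify, using the shift rules recorded in Lemma~\ref{lem:commut}, that insertions at equal or increasing indices do not interfere and that the recovered indices $i_j$ genuinely refer to intervals of $C$ itself. For maps the corresponding obstacle is to see that bridges attach the hanging submaps $M_j$ to the bridgeless block $M_B$ exactly as subdiagrams attach to $C$; this is the bridge-block (bridge-tree) decomposition of a connected graph, and the fact that the Bridge First Labeling numbers corners so that $\Mapins$ obeys the same shift rules (Lemma~\ref{lem:commut2}) lets the diagram argument be copied essentially verbatim, the only packaging difference being that a single bridge carries one arbitrary submap $M_j$ rather than a per-interval concatenation that must be split into indecomposables.
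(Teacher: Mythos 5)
Your proposal is correct and takes essentially the same approach as the paper's (very terse) proof: take $C$ to be the connected component of the root chord (resp.\ $M_B$ the bridgeless component of the root), observe that the remaining chords/submaps sit inside single intervals of $C$ (resp.\ hang off corners of $M_B$), and insert them starting from the largest index innermost so that $i_1\le\dots\le i_k$ genuinely refer to positions in $C$ (resp.\ the Bridge First Labeling of $M_B$). Your write-up simply supplies details the paper leaves implicit, namely the no-straddling argument, the unique factorization of each interval's contents into indecomposables, and the index bookkeeping via Lemmas~\ref{lem:commut} and~\ref{lem:commut2}.
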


\begin{figure}[!ht]
\centering
\includegraphics[width = \textwidth]{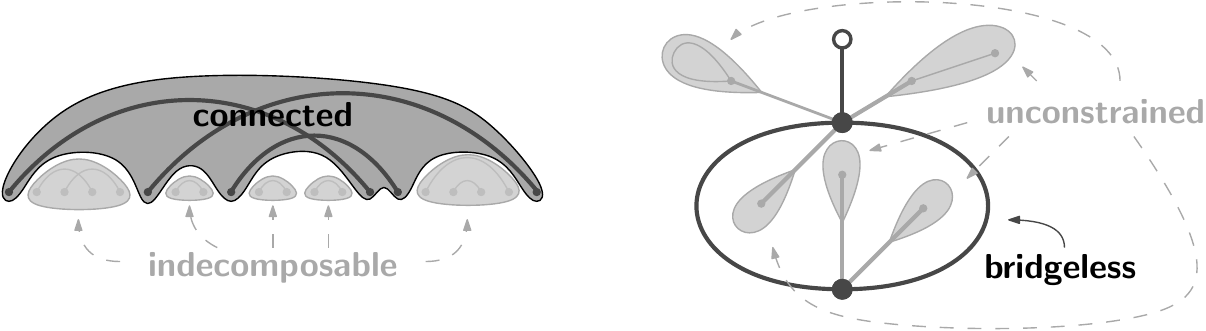}
\caption{\textit{Left.} Decomposition of an indecomposable diagram. \textit{Right.} Decomposition of an unconstrained map. It is also the image of the diagram under $\phi=\overline\theta$. }\label{fig:decomposition}
\end{figure}

\begin{proof}
\noindent \textbf{Indecomposable diagrams.}
Here $C$ is the connected component of $D$ that includes the root chord. We can recover $D$ from $C$ by inserting in each interval of $C$ a sequence of indecomposable diagrams. We can do that starting from the right and ending to the left, which gives the above decomposition.

\noindent \textbf{Maps.} Here $M_B$ is the ``bridgeless component'' of the root (see right side of Figure~\ref{fig:decomposition}).
We recover $M$ from $M_B$ by grafting on each corner of $M_B$ a sequence of combinatorial maps. This can be done in the decreasing order for the Bridge First Labeling of $M_B$.
\end{proof}

\newcommand{\otheta}{\overline \theta}

\begin{definition}[Definition of $\otheta$]
Consider a map $M$. Let \[M =  \MapIns {M_1} {i_1} \circ \MapIns {M_2} {i_2} \circ \dots \circ \MapIns {M_k} {i_k} (M_B)\] be the decomposition of $M$ described by Proposition~\ref{prop:decomposition}. Then $\otheta(M)$ is defined as the diagram 
\[\otheta(M) = \DiagIns {\otheta(M_{1})} {i_1} \circ \DiagIns {\otheta(M_{2})} {i_2} \circ \dots \circ \DiagIns {\otheta(M_{k})} {i_k} \left( \theta(M_B) \right).\]
where $\theta$ is the bijection defined by Definition~\ref{def:theta} and where $\otheta(M_{1}),\dots,\otheta(M_{k})$  are computed recursively\footnote{Since we have $\otheta = \theta$ for bridgeless diagrams, the base cases of the recursion are well treated.}.
\end{definition}

It is easy to prove that $\otheta$ is a bijection since $\otheta^{-1}$ can be similarly defined by swapping the roles of maps and diagrams. Moreover, when $M$ is bridgeless, we have $k=0$. Therefore, the restriction of $\otheta$ to bridgeless maps is, by definition, equal to $\theta$.

Theorem~\ref{theo:restriction} then results from the following proposition.

\begin{proposition} We have $\phi=\otheta$. 
\label{prop:phi=theta}
\end{proposition}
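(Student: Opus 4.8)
The plan is to prove $\phi = \otheta$ by induction on the size of $M$, by showing that $\otheta$ satisfies exactly the recursion that \emph{defines} $\phi$ in Definition~\ref{def:phi2}. The base case ($M$ reduced to the root) is immediate, since both functions return the one-chord diagram. For the inductive step, assuming $\phi = \otheta$ on all maps of smaller size, it suffices to establish the two compatibility identities
\begin{align*}
\otheta\left(\MapIns{M_\downarrow}{1}(M_\uparrow)\right) &= \DiagIns{\otheta(M_\downarrow)}{1}\left(\otheta(M_\uparrow)\right), \\
\otheta\left(\RootEdge{k}(M')\right) &= \RootChord{k}\left(\otheta(M')\right),
\end{align*}
which correspond respectively to the bridge and non-bridge cases of Definition~\ref{def:phi2}. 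Indeed, granting these, the inductive step is immediate: for instance in the non-bridge case $\phi(M) = \RootChord{k}(\phi(M')) = \RootChord{k}(\otheta(M')) = \otheta(M)$ by the induction hypothesis and the second identity, and symmetrically in the bridge case.

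For the bridge case, first I would read off from Proposition~\ref{prop:decomposition} when the root edge of $M$ is a bridge: since $\MapIns{\cdot}{1}$ attaches its argument by a bridge at the root corner, the root edge of $M$ is a bridge precisely when the canonical decomposition of $M$ contains a graft at corner $1$ of $M_B$, i.e. when $i_1 = 1$, and the outermost such graft is the map $M_\downarrow$ hanging from the root-edge bridge. Writing $M = \MapIns{M_\downarrow}{1}(M_\uparrow)$, the canonical decomposition of $M_\uparrow$ is obtained from that of $M$ by simply deleting this outermost graft, so the definition of $\otheta$ gives at once $\otheta(M) = \DiagIns{\otheta(M_\downarrow)}{1}(\otheta(M_\uparrow))$, the first identity.

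For the non-bridge case I would take the canonical decomposition $M' = \MapIns{M'_1}{i'_1}\circ\dots\circ\MapIns{M'_{m'}}{i'_{m'}}(M'_B)$, apply $\RootEdge{k}$ on the outside, and push it inward past every $\MapIns{M'_j}{\cdot}$ using the commutation relations of Lemma~\ref{lem:commut2}, until it acts directly on $M'_B$. This yields the canonical decomposition $M = \MapIns{M'_1}{i_1}\circ\dots\circ\MapIns{M'_{m'}}{i_{m'}}(\RootEdge{k_0}(M'_B))$, with shifted positions $i_j$ and final index $k_0$ dictated by the index arithmetic of the lemma; here one must check that the $i_j$ stay sorted (root-edge insertion relabels corners monotonically) and that $M_B = \RootEdge{k_0}(M'_B)$ is genuinely the bridgeless component of the root of $M$. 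Crucially, $\RootEdge{k_0}(M'_B)$ is exactly the boxed product $(\text{root}) \star_{k_0} M'_B$ of Definition~\ref{def:prod2}, so Definitions~\ref{def:theta} and~\ref{def:prod} give $\theta(\RootEdge{k_0}(M'_B)) = \RootChord{k_0}(\theta(M'_B))$ regardless of the internal structure of $M'_B$; plugging into the definition of $\otheta$ produces
\[\otheta(M) = \DiagIns{\otheta(M'_1)}{i_1}\circ\dots\circ\DiagIns{\otheta(M'_{m'})}{i_{m'}}\left(\RootChord{k_0}(\theta(M'_B))\right).\]
On the other side, starting from $\otheta(M') = \DiagIns{\otheta(M'_1)}{i'_1}\circ\dots\circ\DiagIns{\otheta(M'_{m'})}{i'_{m'}}(\theta(M'_B))$ and pushing $\RootChord{k}$ inward through the $\DiagIns$ operations by the diagram commutation relations of Lemma~\ref{lem:commut}, I would obtain the very same expression, the agreement being forced because Lemmas~\ref{lem:commut} and~\ref{lem:commut2} have identical index formulas and $\otheta$ is size-preserving, so $|\otheta(M'_j)| = |M'_j|$ and every commutation step is of the same type on both sides. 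Comparing the two gives $\otheta(M) = \RootChord{k}(\otheta(M'))$, the second identity.

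The main obstacle will be the non-bridge case, specifically the bookkeeping that the two commutation lemmas fire in lockstep: at each step the same branch ($k \le \ell-2$ versus $1 \le \ell-1 \le k$) must be selected on the map side and the diagram side and must produce the same shift, and the resulting positions $i_j$ must remain sorted so that the value of $\otheta$ really is read from the genuine canonical decomposition of Proposition~\ref{prop:decomposition}. The conceptual reason everything aligns is that $\theta$, the two commutation lemmas, and the two boxed products were deliberately designed to mirror one another; the real work is the careful index tracking together with verifying that adding and deleting the root edge interacts with the decomposition exactly as the algebra predicts.
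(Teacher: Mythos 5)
Your base case and bridge case are sound and coincide with the paper's case~1. The genuine gap is in the non-bridge case: you assume that $\RootEdge k$ can always be pushed inward through the canonical decomposition of $M'$, arriving at $M = \MapIns{M'_1}{i_1}\circ\dots\circ\MapIns{M'_{m'}}{i_{m'}}\bigl(\RootEdge{k_0}(M'_B)\bigr)$ with $M_B = \RootEdge{k_0}(M'_B)$. This fails whenever the new root edge attaches to a corner lying \emph{inside} one of the grafted components $M'_j$ rather than inside $M'_B$. Read in reverse, the two cases of Lemma~\ref{lem:commut2} convert $\RootEdge a \circ \MapIns{M'_j}{b}$ into a composite of the form $\MapIns{M'_j}{\cdot}\circ\RootEdge{\cdot}$ only when $a \le b$ or $b \le a - 2|M'_j|$; the range $b < a < b + 2|M'_j|$ --- exactly the block of labels occupied by the inserted map --- is not covered, and no such commutation can exist there, because inserting the root edge into the grafted part puts the bridge on a cycle, so the bridgeless root component $M_B$ of $M$ becomes strictly larger than any root-edge insertion into $M'_B$. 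Concretely: let $M'$ be the size-$2$ map whose root edge is a bridge from the root vertex $u$ to a vertex $v$, and let $M=\RootEdge 2(M')$ attach the new root edge at the corner at $v$. Then $M$ is bridgeless, so $M_B = M$ has size $3$, whereas your formula predicts an $M_B$ of size $2$; here $a=2$, $b=1$, $|M'_1|=1$, and neither branch of the lemma applies.

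This is precisely why the paper's proof distinguishes \emph{three} cases rather than two. In the non-bridge situation it starts from the canonical decomposition of $M$ itself (Proposition~\ref{prop:decomposition}), not of $M'$, and slides the root insertion \emph{outward}, the direction in which Lemmas~\ref{lem:commut} and~\ref{lem:commut2} are total: for $\ell \ge 2$ the conditions $k\le\ell-2$ and $\ell-1\le k$ cover all indices, and all graft positions satisfy $i_j > 1$ since the root edge is not a bridge. It then splits on whether deleting the root edge of $M_B$ leaves a bridgeless map. Your argument only handles the first alternative (the paper's case~2, where $M_B$ is the boxed product of the trivial map with $M'_B$; your observation that $\theta(\RootEdge{k_0}(M'_B))=\RootChord{k_0}(\theta(M'_B))$ is correct there). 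The second alternative (the paper's case~3), where $M_B = M'\star_\ell M''$ with $M'$ nontrivial and $\theta(M_B)$ must first be unfolded via Definitions~\ref{def:prod2} and~\ref{def:prod} before any commuting takes place, is missing entirely, and your ``lockstep'' mechanism cannot reach it because the very first inward push is already illegal in that regime. To repair the proof you would need to add this case analysis on $M_B$, which essentially reconstructs the paper's argument.
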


\begin{proof} We prove that $\phi(M) = \otheta(M)$ for any map $M$ by induction on the size of the map. The base case (when $M$ reduced to a root) is given by the definitions.

Let $M$ be a map of size $>1$, which we decompose (by Proposition~\ref{prop:decomposition}) as \[M =  \MapIns {M_1} {i_1} \circ \MapIns {M_2} {i_2} \circ \dots \circ \MapIns {M_k} {i_k} (M_B).\] There are three possibilities.

\textbf{1. The root edge of $M$ is a bridge.}  Since the root edge of $M$ is a bridge, we have $i_1 = 1$. Moreover, referring to the notation of Definition~\ref{def:phi2}, $M_1=M_\downarrow$ and $M_\uparrow =  \MapIns {M_2} {i_2} \circ \dots \circ \MapIns {M_k} {i_k} (M_B)$. By using twice the definition of $\otheta$, we have 
\begin{align*}
\otheta(M) =& \DiagIns {\otheta(M_{1})} 1 \circ \DiagIns {\otheta(M_{2})} {i_2} \circ \dots \circ \DiagIns {\otheta(M_{k})} {i_k} \left( \theta(M_B) \right) \\ =& \DiagIns {\otheta(M_{1})} 1 \left( \otheta(M_\uparrow)\right).
\end{align*}
But by induction, $\otheta(M_1)=\phi(M_1)$ and $\otheta(M_\uparrow) = \phi(M_\uparrow)$. Thus, we recover the definition of $\phi$, and so $\otheta(M) = \phi(M)$.

\textbf{2. The root edge of $M$ is not a bridge and its deletion in $M_B$ gives a bridgeless map $M'_B$.} Then $M_B$ is of the form $M_B = \RootEdge i (M'_B)$.  By definition of $\theta$, we have $\theta(M_B) = \RootChord i (\theta(M'_B))$.
Therefore
\[
\otheta(M) = \DiagIns {\otheta(M_{1})} {i_1} \circ \dots \circ \DiagIns {\otheta(M_{k})} {i_k} \circ \RootChord i\left( \theta(M'_B) \right).\]
Since $i_1 > 1$, we can use Lemma~\ref{lem:commut2} to slide the operation $\Rootdiag$ to the left:
\[
\otheta(M) = \RootChord j \circ \DiagIns {\otheta(M_{1})} {j_1} \circ \dots \circ \DiagIns {\otheta(M_{k})} {j_k} \left( \theta(M'_B) \right).\]
(The integers $j$, $j_1, \dots, j_k$ are given by Lemma~\ref{lem:commut2}.) But by Lemma~\ref{lem:commut} 
we also have 
\begin{align*}
M =  & \MapIns {M_1} {i_1}  \circ \dots \circ \MapIns {M_k} {i_k} \circ \RootEdge i (M'_B) \\
= & \RootEdge j  \circ \MapIns {M_1} {j_1} \circ \dots \circ \MapIns {M_k} {j_k} (M'_B),
\end{align*}
with the same $j,j_1,\dots,j_k$ as above. So using successively the definition of $\phi$, the induction hypothesis, and the definition of $\overline\theta$,
\begin{align*}
\phi(M) =  & \RootEdge j   \left(\phi \left(\MapIns {M_1} {j_1} \circ \dots \circ \MapIns {M_k} {j_k} (M'_B) \right)\right) \\
= & \RootEdge j  \left( \otheta \left(\MapIns {M_1} {j_1} \circ \dots \circ \MapIns {M_k} {j_k} (M'_B) \right)\right) \\
= & \RootEdge j \left( \DiagIns {\otheta(M_{1})} {j_1} \circ \dots \circ \DiagIns {\otheta(M_{k})} {j_k} \left( \theta(M'_B) \right)\right) = \overline\theta(M).
\end{align*}

\textbf{3. The root edge of $M$ is not a bridge and its deletion in $M_B$ does not give a bridgeless map.}  Since the deletion of the root edge of $M_B$ does not give a bridgeless map, $M_B$ is a boxed product (see Definition~\ref{def:prod2}) of the form \[M_B = M' \star_\ell M''\] where $M'$ is a bridgeless map of size $>1$, and $M''$ is some bridgeless map. Since $M'$ has size more than $1$, it can be put in the form $\RootEdge i (\widehat M)$. Then, by definition of the boxed product, $M_B$ can be written as 
\[ M_B = \RootEdge {i+\ell} \circ \MapIns {M''} {i} {\left(\widehat M\right)}.\]
Since by definition, $\theta(M_B) = \theta(M') \star_\ell \theta(M'')$, we also  have 
\[ \theta(M_B) = \RootChord {i+\ell} \circ \DiagIns {\theta(M'')} {i} \left(\theta(\widehat M)\right).\]
Then, using the same techniques as the previous case, we apply the definition of $\otheta$:
\[ \small
\otheta(M) = \DiagIns {\otheta(M_{1})} {i_1} \circ \dots \circ \DiagIns {\otheta(M_{k})} {i_k} \circ \RootChord {i+\ell} \circ \DiagIns {\theta(M'')} {i} \left(\theta(\widehat M)\right),\]
we commute the operators thanks to Lemma~\ref{lem:commut}:
\[
\otheta(M) = \RootChord j \circ \DiagIns {\otheta(M_{1})} {j_1} \circ \dots \circ \DiagIns {\otheta(M_{k})} {j_k}  \circ \DiagIns {\theta(M'')} {i} \left(\theta(\widehat M)\right),\]
we recognize the definition of $\otheta$:
\[
\otheta(M) =  \RootEdge j  \left( \otheta \left(\MapIns {M_1} {j_1} \circ \dots \circ \MapIns {M_k} {j_k} \circ \MapIns {M''} {i} \left(\widehat M \right) \right) \right), \]
and we use the induction hypothesis and the definition of $\phi$ to conclude.
\end{proof}


\subsection{Planar maps as diagrams with forbidden patterns}\label{sec forbidden}


%

Planarity of a combinatorial map can be recognized using its Euler characteristic.
\begin{definition}[Faces, Euler characteristic, planarity]
  Let $M = (H,\sigma,\alpha,r)$ be a rooted combinatorial map (potentially with dangling edges).
  The \definand{faces} of $M$ are the orbits of the composite permutation $\sigma\alpha$.
  The \definand{root face} is the face containing $r$.
  The \definand{Euler characteristic} of $M$ is defined by
\[ \chi(M) = |\orbit(\sigma)| + |\orbit(\alpha)| + |\orbit(\sigma\alpha)| - |H|.
\]
$M$ is said to be \definand{planar} if $\chi(M) = 2$.
\end{definition}

We here characterize the image of planar maps under the previous bijections.  

\begin{figure}[!ht]
\centering
\includegraphics{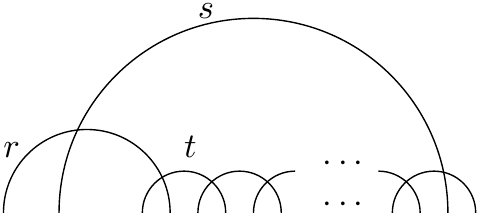}
\caption{Forbidden configuration for diagrams corresponding to planar maps.}\label{fig forbidden}
\end{figure}

\begin{proposition}
\label{prop:bij}
Under $\phi$ planar rooted maps with $n$ edges are in bijection with indecomposable diagrams with $n$ chords which do not contain the configuration of Figure~\ref{fig forbidden} as a subdiagram.
Thus, restricting to $\theta$, a bridgeless map is planar if and only if the corresponding connected diagram does not contain the forbidden configuration.
\end{proposition}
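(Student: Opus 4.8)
The plan is to track how the Euler characteristic $\chi$ behaves under the building-block operations $\RootEdge{k}$ and $\MapIns{M'}{k}$, and to identify the diagrammatic feature—the forbidden configuration of Figure~\ref{fig forbidden}—that corresponds precisely to a genus-raising step. Since $\phi = \otheta$ is built recursively from these operations (Proposition~\ref{prop:phi=theta} and the decomposition of Proposition~\ref{prop:decomposition}), it suffices to understand planarity one operation at a time. First I would show that the bridge-insertion operation $\MapIns{M'}{k}$ never changes planarity: gluing two maps by a bridge adds one edge and merges along the root face without creating a handle, so $\chi(\MapIns{M'}{k}(M)) = \chi(M) + \chi(M') - 2$, and the glued map is planar iff both pieces are. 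On the diagram side the matching operation $\DiagIns{D'}{k}$ inserts $D'$ into an interval of $D$, which cannot by itself create the forbidden pattern across the two pieces because the inserted chords are nested strictly inside a single interval. Thus these operations are ``transparent'' to planarity on both sides, and the entire question reduces to the root-edge insertion $\RootEdge{k}$ and its diagram counterpart $\RootChord{k}$.

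The heart of the argument is therefore to analyze the single operation $M \mapsto \RootEdge{k}(M)$, where a new root edge is drawn from the root corner to the $k$th corner of the Bridge First Labeling, and compare it to $\phi(M) \mapsto \RootChord{k}(\phi(M))$, which inserts a new root chord with right endpoint in the $k$th interval of the diagram. Adding an edge between two corners of a map either splits a face (keeping $\chi$ fixed, hence preserving planarity) or joins two distinct faces into one (dropping $\chi$ by $2$, hence destroying planarity); which case occurs depends on whether the two chosen corners lie on a common face. The key step is to translate ``the root corner and the $k$th corner lie on different faces'' into a purely diagrammatic condition on $\phi(M)$ and the interval $k$. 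I expect this condition to be exactly the appearance of the configuration of Figure~\ref{fig forbidden}: the new chord together with the already-present chords it crosses realizes the pattern precisely when the edge would merge two faces and raise the genus. Because the Bridge First Labeling is designed (Lemma~\ref{lem:commut2}) to make $\RootEdge{k}$ and $\DiagIns{D'}{\ell}$ commute with their diagram analogues in lockstep with Lemma~\ref{lem:commut}, the corner-face incidence structure is carried faithfully through $\phi$, so this local dictionary is consistent with the recursive construction.

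With these two ingredients—transparency of bridge insertion and the face-merging characterization of the problematic root-edge insertions—I would assemble the proof by induction on the size of $M$, following the three-case structure already used in the proof of Proposition~\ref{prop:phi=theta}. In each case one invokes the relation $\chi(\RootEdge{k}(M)) \in \{\chi(M), \chi(M)-2\}$ together with the inductive hypothesis that $\phi(M)$ is forbidden-pattern-free iff $M$ is planar, and checks that the newly inserted root chord $\RootChord{k}$ creates the forbidden subdiagram exactly when the corresponding root edge merges two faces. The restriction statement for $\theta$ is then immediate: since $\theta$ is literally the restriction of $\phi$ to bridgeless maps and sends them to connected diagrams, a bridgeless map is planar iff its image under $\theta$ avoids the forbidden configuration. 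The main obstacle I anticipate is the second step above—rigorously proving the local dictionary that ``the two corners joined by $\RootEdge{k}$ share a face'' is equivalent to ``$\RootChord{k}$ does not create the pattern of Figure~\ref{fig forbidden}.'' This requires a careful reading of how faces (orbits of $\sigma\alpha$) are encoded on the diagram side, and in particular how the Bridge First Labeling aligns corner-positions with intervals; getting the crossing pattern to match the face-adjacency combinatorics exactly is where the real work lies.
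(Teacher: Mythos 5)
Your global strategy is in fact the same as the paper's: the paper also begins by observing that $\MapIns{M_1}{k}(M_2)$ is planar iff both pieces are, and that $\RootEdge{k}(M_1)$ is planar iff $M_1$ is planar and the $k$th corner lies on the root face (your Euler-characteristic dichotomy $\Delta\chi \in \{0,-2\}$ is correct, and ``the $k$th corner shares a face with the root corner'' is exactly the paper's notion of \emph{external corner}). The trouble is that your proposal stops precisely where the real proof begins: you never establish the ``local dictionary'' equating face-merging root-edge insertions with creation of the configuration of Figure~\ref{fig forbidden}, and you acknowledge this yourself. Moreover, the one tool you cite for bridging the gap cannot do the job: Lemma~\ref{lem:commut2} only records how the Bridge First Labeling of corners is permuted by the operations; it says nothing about faces (orbits of $\sigma\alpha$), so it does not ``carry the corner-face incidence structure faithfully through $\phi$''. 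As written, the central equivalence of the proposition remains a conjecture inside your argument.

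Concretely, three ingredients are missing, and they constitute essentially all of the paper's proof. First, one needs a diagram-side surrogate for ``internal corner'' that can be propagated through \emph{both} operations: the paper defines \emph{blocked intervals} and proves by a separate induction (Lemma~\ref{lem blocked}) that for a planar map $M$ the blocked intervals of $\phi(M)$ correspond to the internal corners of $M$; this requires explicitly computing how the external corners transform under $\RootEdge{k}$ (they become the root corner together with the corners of BFL index $\ge k+2$), a computation absent from your sketch. Second, one needs the purely combinatorial step converting ``root chord $r$ inserted into a blocked interval'' into an occurrence of the forbidden pattern: the paper extracts a connected subdiagram $C$ with root chord $s$ and a chord $t$ both crossed by $r$, and chooses a \emph{minimal} chain of chords in $C$ joining $t$ to the right endpoint of $s$ to realize exactly the pattern of Figure~\ref{fig forbidden}; conversely, any occurrence of the pattern forces $r$ to have been inserted, at the moment of its creation as a root chord, into an interval lying under $s$ and $t$ in the same component, hence blocked. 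Third, one needs the persistence facts that further $\Rootdiag$ and $\Diagins$ operations preserve the pattern, and further $\Rootmap$ and $\Mapins$ operations preserve nonplanarity, which close the induction in the subcase where the intermediate map is already nonplanar (your stated one-step equivalence is only valid when the current map is planar). Your inductive skeleton would assemble these pieces correctly, but without the first two of them there is no proof.
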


Before we prove this result we need a couple more definitions.

\begin{definition}[Internal/external corners]
  Given a planar rooted map $M$, a corner whose second component is contained in the root face is called an \definand{external corner} of $M$.
  A corner which is not external is called \definand{internal}.
\end{definition}

\begin{definition}[Blocked/unblocked intervals]
  Given an indecomposable diagram $D$, an interval in $D$ is a \definand{blocked interval} if it is 
  \begin{itemize}
  \item under the root chord and under at least one other chord in the same connected component as the root chord,
    
  \item or already blocked in a component of the diagram obtained by removing the root chord.
  \end{itemize}
  An interval which is not blocked is called \definand{unblocked}.
\end{definition}

\begin{lemma}\label{lem blocked}
  Let $M$ be a planar rooted map.  The blocked intervals of $\phi(M)$ correspond to the internal corners of $M$.
\end{lemma}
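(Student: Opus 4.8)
The plan is to prove the statement by induction on the number of edges of $M$, following the three cases of the recursive definition of $\phi$ (Definition~\ref{def:phi2}). The starting point is the observation that $\phi$ induces a canonical bijection between the $2n-1$ corners of a size-$n$ map $M$ and the $2n-1$ intervals of $\phi(M)$: the corner carrying label $j$ in the Bridge First Labeling corresponds to the $j$th interval (counted from the left) of $\phi(M)$. This holds because $\phi$ builds $\phi(M)$ from $\phi$ of its pieces by applying $\RootChord k$ (resp. $\DiagIns {D'} 1$) exactly where $M$ was built using $\RootEdge k$ (resp. $\MapIns {M'} 1$), with the very same indices, and because the two families of operations transform labels and interval-positions identically, which is precisely what Lemmas~\ref{lem:commut} and~\ref{lem:commut2} guarantee. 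Under this index bijection the goal becomes the assertion that a corner is internal if and only if its interval is blocked. The base case ($M$ reduced to the root) is immediate, since the unique corner lies in the root face (external) while the unique interval of the one-chord diagram is under the root chord but under no other chord (unblocked).

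For the bridge case $M = \MapIns {M_\downarrow} 1 (M_\uparrow)$ and $\phi(M) = \DiagIns {\phi(M_\downarrow)} 1 (\phi(M_\uparrow))$, I would use on the map side that adding a bridge never splits a face: the bridge borders a single face on both sides, so the root face of $M$ is obtained by merging the root faces of $M_\uparrow$ and $M_\downarrow$ along the bridge. Consequently the internal corners of $M$ are exactly those of $M_\uparrow$ together with those of $M_\downarrow$. On the diagram side, since $\phi(M_\downarrow)$ is inserted as a self-contained block immediately after the left endpoint of the root chord, the only chord of the root's connected component spanning that block is the root chord itself; hence no interval inside $\phi(M_\downarrow)$ is blocked through the first clause of the definition, and the blocked intervals of $\phi(M)$ split as those of $\phi(M_\downarrow)$ (inherited through the second clause, applied to the freed component) together with those of $\phi(M_\uparrow)$. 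The inductive hypothesis applied to $M_\downarrow$ and $M_\uparrow$ then closes this case.

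For the non-bridge case $M = \RootEdge k (M')$ and $\phi(M) = \RootChord k (\phi(M'))$, removing the new root edge (resp. root chord) returns $M'$ (resp. $\phi(M')$, a single indecomposable component), so the second clause identifies the blocked intervals of $\phi(M)$ inherited from $\phi(M')$ with the internal corners inherited from $M'$ by the inductive hypothesis. The genuinely new content is to match the \emph{newly} blocked intervals, namely those that become, via the first clause, under the new root chord \emph{and} under some other chord of the root's component, with the \emph{newly} internal corners, namely those external in $M'$ that the non-bridge edge from the root corner to the $k$th corner now encloses. I would prove this by first giving a direct reading of externality: a corner is external precisely when its interval is directly exposed to the root region, meaning it is not under the root chord, or else under the root chord but shielded by no deeper chord of the root's component. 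Adding the chord $\RootChord k$ keeps the root corner and the $k$th corner on the root face and splits that face in two, and the corners it encloses are exactly the previously external ones whose intervals become nested under a chord of the new root component. Reconciling the two descriptions uses the rightmost-DFS reading of the Bridge First Labeling, which traverses the boundary of the root face in the order of the interval indices, so that ``enclosed by the new edge'' translates index by index into ``newly shielded by a root-component chord.''

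The main obstacle is this last matching in the non-bridge case: it is the only place where the planar geometry of which external corners get cut off when a chord is drawn inside the root face must be reconciled with the purely combinatorial nesting condition defining blocked intervals. I expect the cleanest route is to establish the directly-exposed characterization of external corners as an auxiliary claim, itself provable by the same three-case induction, since it converts the face-splitting statement into a statement about chord nesting that the Bridge First Labeling tracks index by index. Once the non-bridge case is settled, assembling the three cases completes the induction, and the specialization to $\theta$ and to connected diagrams is then immediate from Theorem~\ref{theo:restriction}.
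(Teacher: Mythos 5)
Your overall skeleton coincides with the paper's: the same induction over the recursive construction of Definition~\ref{defn:phi}, the same base case, and essentially the same bridge case (the paper, like you, observes that statuses on both sides are simply inherited from the two pieces). But the non-bridge case, which you yourself flag as ``the main obstacle,'' is not actually proved, and the route you sketch for it has a genuine flaw. The paper settles this case with two crisp positional facts: for $M = \RootEdge k (M_1)$ with $M$ planar (so $k$ is an external corner of $M_1$), (i) the external corners of $M$ are exactly the root corner together with the corners of Bridge First Labeling index $> k$ in $M_1$, and (ii) the new root chord blocks exactly the intervals $2$ through $k+1$ of $\phi(M) = \RootChord k (\phi(M_1))$, leaving all other intervals' statuses unchanged. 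With these, the matching is immediate index by index; no auxiliary characterization of externality is needed. Fact (ii) quietly uses that $\phi(M_1)$ is \emph{indecomposable}: every interval of an indecomposable diagram lies under some chord, and a short chain-of-crossings argument (starting from a covering chord of maximal right endpoint and using that the chord covering the insertion interval $k$ must cross the new root chord) places a covering chord in the root's connected component, as the first clause of the blocked-interval definition requires. You never invoke indecomposability, so even granting your framework you have no argument that the enclosed corners' intervals acquire a \emph{root-component} chord above them.

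More seriously, the auxiliary claim you propose to lean on is false as stated. You assert that a corner is external precisely when its interval ``is not under the root chord, or else under the root chord but shielded by no deeper chord of the root's component.'' This is only the negation of the \emph{first} clause of the blocked-interval definition and ignores the second (recursive) clause. Concretely, take $M$ whose root edge is a bridge attached to a sub-map $M_\downarrow$ containing an internal corner (e.g.\ a loop): in $\phi(M) = \DiagIns{\phi(M_\downarrow)}1(\phi(M_\uparrow))$ the corresponding blocked interval of $\phi(M_\downarrow)$ lies under the root chord of $\phi(M)$ but is shielded only by chords of a \emph{different} connected component, so your criterion declares the corner external while it is internal. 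Repairing the statement forces you to reinstate the recursive clause, at which point the ``auxiliary claim'' is just the lemma itself, and proving it ``by the same three-case induction'' gives you nothing you did not already need --- the induction must at some point confront exactly the positional facts (i) and (ii) above, which is where the paper does the real work (fact (i) being the assertion, which you do gesture at, that the Bridge First Labeling visits the external corners in counterclockwise order along the root face).
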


\begin{proof}
We prove this lemma by induction.  In the base case $M$ is the trivial map, which has no internal corners, corresponding to the one-chord diagram with no blocked intervals.  Suppose $M$ is a planar map with more than one half-edge.  There are two cases.

Suppose the root edge of $M$ is a bridge, so that $M=\MapIns {M_1}1(M_2)$, where $M_1$ and $M_2$ are planar maps.
All of the internal corners of $M_1$ and $M_2$ remain internal in $M$, and all of the external corners remain external (the external root corner of $M_2$ splits into two external corners in $M$).
Likewise, since the connected components remain the same, all of the blocked intervals of $\phi(M_1)$ and $\phi(M_2)$ remain blocked in $\phi(M) = \DiagIns {\phi(M_1)} 1 (\phi(M_2))$, and unblocked intervals remain unblocked.
By induction, the internal corners of $M_1$ and $M_2$ correspond to the blocked corners of $\phi(M_1)$ and $\phi(M_2)$, so this ends the proof.


The other case is $M=\RootEdge k(M_1)$, where $M_1$ is planar and $k$ is an external corner of $M_1$.
The external corners of $M$ are its root corner, along with the external corners of $M_1$ which counterclockwisely follow the corner labeled by $k$. 
Expressed in terms of the Bridge First Labeling of $M_1$, these external corners are those with an index larger than $k$, which correspond to the corners with index $\ge k+2$ in $M$.
On the other hand, for the diagram $\phi(M) = \RootChord k(\phi(M_1))$, the new root chord blocks the intervals $2$ through $k+1$ in $\phi(M)$, while leaving the other intervals unchanged.
By induction, internal corners of $M_1$ correspond to blocked corners of $\phi(M_1)$, so this concludes the proof.
\end{proof}

\begin{proof}[Proof of Proposition~\ref{prop:bij}]
Let us first observe that $\MapIns {M_1} k (M_2)$ is planar if and only if both $M_1$ and $M_2$ are planar, while $\RootChord k (M_1)$ is planar if and only if $M_1$ is planar and $k$ is an external corner of $M_1$.


Now, consider a map $M$ as built iteratively according to the induction used in the definition of $\phi(M)$.

Suppose $M$ is nonplanar. Then at some stage in this construction we must have built a map $\RootEdge k (M_1)$ by inserting a new root edge into an internal corner of a planar map $M_1$.  Let $M'=\RootEdge k(M_1)$.  We will now proceed to show that $\phi(M')$ has the configuration of Figure~\ref{fig forbidden}.

By Lemma~\ref{lem blocked}, since $\RootEdge k (M_1)$ comes from the insertion of a new root edge into an internal corner, $\RootChord k (\phi(M_1))$ comes from the insertion of a a new root into a blocked interval $k$ of $\phi(M_1)$.  Let $r$ be the root chord of $\phi(M') = \RootChord k (\phi(M_1))$.
Since the interval where $r$ is inserted is blocked, there is some subdiagram of $\phi(M_1)$ where the first point of the definition of blocked interval holds. In other words, there is a connected subdiagram $C$ of $\phi(M_1)$ with root chord $s$, and when $r$ is inserted via $\RootChord k (\phi(M_1))$, then $r$ crosses both $s$ and another chord $t$ of $C$.  Since $s$ is the root of $C$, other chords of $C$ can only cross $s$ on the right.  Also $C$ is connected, so there is a chain of chords connecting $t$ to the right hand side of $s$.   By taking a minimum chain we can guarantee that the chords in the chain go from left to right and  do not cross chords which are not their immediate neighbors.   Thus $r$, $s$ and the chain give the forbidden configuration in Figure~\ref{fig forbidden}.

Further operations of $\Rootdiag $ and $\Diagins$ preserve the forbidden configuration, and so $\phi(M)$ also has the forbidden configuration.  Thus we have proved that if $M$ is nonplanar then $\phi(M)$ has the forbidden configuration.

Now consider the converse.  With no planarity assumption on $M$, suppose that $\phi(M)$ has the forbidden configuration.  Then at some stage in the construction of $\phi(M)$ we must have built a diagram $\RootChord k (\phi(M_1))$ so that the newly constructed root chord, call it $r$, crosses the root chord $s$ of $\phi(M_1)$ and another chord $t$ of $\phi(M_1)$ and there is a chain in $\phi(M_1)$ joining the right end points of $t$ and $s$.  In particular the $k$th interval  of $\phi(M_1)$ is under $s$ and $t$, both of which are in the same connected component of $\phi(M_1)$.  Thus this interval is blocked in $\phi(M_1)$.  By Lemma~\ref{lem blocked}, it must correspond to an internal corner of $M_1$, and so $\RootEdge k (M_1)$ is nonplanar.  Further, once the map becomes nonplanar, no sequence of $\Rootmap$ or $\Mapins$ operations can make the map planar again, and so $M$ is also nonplanar. 
\end{proof}

\section{New perspectives on chord diagram expansions in quantum field theory}
\label{sec:qft}

\subsection{Context}
\label{ss:qftcontext}

Interestingly, by the work of some of the authors with other collaborators~\cite{MYchord, HYchord, CYchord}, rooted connected chord diagrams appear in quantum field theory where they give series solutions to certain Dyson-Schwin\-ger equations. We are going to see that the $\theta$ bijection of Section~\ref{sec:bijection} will simplify some formulas in this theory: Corollary~\ref{cor:allinmaps} recasts the main result of \cite{HYchord} in map language; Table~\ref{tab:transfer} shows how important parameters translate, some becoming considerably more natural; and along the way we prove and generalize a conjecture of Hihn (see the discussion at the end of Subsection~\ref{subsec:new qft}).  

Dyson-Schwinger equations are the quantum analogues of the classical equations of motion. The solutions of such functional equations 
are some Green functions of the quantum field theory. 
It turns out that these equations have a nice underlying combinatorial aspect.  They capture the decomposition of Feynman diagrams into subdiagrams, so viewing perturbative expansions as intricately weighted generating functions, the Dyson-Schwinger equations can be interpreted as equations for the generating functions of appropriate combinatorial classes of Feynman diagrams.  Furthermore these functional equations mirror the combinatorial decomposition of the graphs.  Using the universal property of the Connes-Kreimer Hopf algebra of rooted trees, we can also view the Dyson-Schwinger equations as functional equations for classes of rooted trees.  This happens by using the rooted trees to represent insertion structures of Feynman diagrams.
%

One of us has a program with various collaborators to better understand the combinatorial underpinnings of Dyson-Schwinger equations.  One of the successes of this program has been to solve certain classes of Dyson-Schwinger equations using expansions indexed by chord diagrams, first in \cite{MYchord} and then generalized in \cite{HYchord}. 

The paper \cite{MYchord} considered the Dyson-Schwinger equation corresponding to the situation where one primitive Feynman graph is inserted into itself in all possible ways on one internal edge.  One important instance of a Dyson-Schwinger equation of this form was solved by Broadhurst and Kreimer in \cite{BKerfc}.  This  also gives a good concrete example of this kind of Dyson-Schwinger equation.  Diagrammatically this Dyson-Schwinger equation is
\begin{equation}\label{eq diagrammatic DSE}
\includegraphics{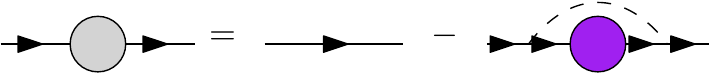}
\end{equation}
where the purple (darker) blob represents a sequence of any number of copies of the second term on the right hand side of the light grey blob equation, namely
\[
\includegraphics{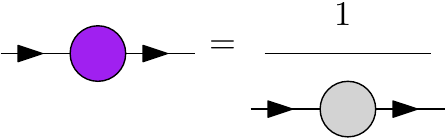}
\]
which is conventionally written as an inverse, meaning to expand as a geometric series.  The actual physical Dyson-Schwinger equation is an integral equation which arises when each Feynman graph is replaced by its Feynman integral, in this case with the Feynman diagrams viewed in massless Yukawa theory.  The diagrammatics are one notation for this equation.  An example of a diagram appearing in this expansion is given in Figure~\ref{fig Yukawa}.  Such Feynman graphs have a rooted tree structure by how they were constructed.

\begin{figure}\centering
  \includegraphics{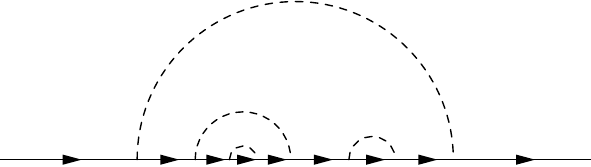}
  \caption{An example of a Feynman graph which occurs in the simpler Dyson-Schwinger equation.}\label{fig Yukawa}
\end{figure}

Returning to the set up of \cite{MYchord} and the connection to chord diagrams,
after being converted into differential form following \cite{Ymem} or \cite{kythesis},  the Dyson-Schwinger equation considered in \cite{MYchord} becomes
\begin{equation}\label{eq s 2 case}
G(x,L) = 1 - xG(x, \partial_{-\rho})^{-1}(e^{-L\rho} - 1)F(\rho)|_{\rho=0}
\end{equation}
where 
\[
F(\rho) = f_0\rho^{-1} + f_1 + f_2\rho + \cdots
\]
is the series expansion of the regularized Feynman integral of the primitive Feynman graph (see \cite{MYchord} for details).  Equation~\eqref{eq s 2 case} should be interpreted as a formal series equation.  

To continue the Broadhurst Kreimer example, in that case we have $F(\rho) = 1/(\rho(1-\rho))$ and so then $G(x,L)$ in~\eqref{eq s 2 case} is the sum indexed by all Feynman diagrams generated by~\eqref{eq diagrammatic DSE} where each diagram contributes its Feynman integral. The variable $L$ is defined as $L=\log(q^2/\mu^2)$ where $q$ is the momentum coming in and going out of each diagram and $\mu$ is a renormalization constant, and $x$ is the coupling constant (giving the strength of the interaction).

\medskip

To proceed, we need two further definitions
concerning rooted connected chord diagrams which arise from the quantum field theory application, see \cite{MYchord, HYchord}.  

\begin{definition}[Intersection order] The \definand{intersection order} of the chords of a rooted connected diagram $C$ is defined as follows.
\begin{itemize}
  \item The root chord of $C$ is the first chord in the intersection order.
  \item Remove the root chord of $C$ and let $C_1, C_2, \ldots, C_k$ be the connected components of the result ordered by their first vertex.
  \item For the intersection order of $C$, after the root chord come all the chords of $C_1$ ordered recursively in the intersection order, then all the chords of $C_2$ ordered by intersection order, and so on.
\end{itemize}
\end{definition}

  This intersection order is not in general the same as the order by first endpoint, see Figure~\ref{fig diff order} for an example. 
The intersection order and the order by first endpoint both define total orders on chords extending the partial order induced by paths in the intersection graph (recall Definition~\ref{def:intgraph-condiag}).

\begin{definition}[Terminal chord] A chord $c$ is \definand{terminal} if the left endpoint of every chord intersecting $c$ is to the left of $c$.
\end{definition}
Equivalently, a chord $c$ is terminal if it does not cross any chords larger than it in the intersection order; or (third equivalent definition) a chord is terminal if it is a sink in the intersection graph. For example, in Figure~\ref{fig diff order}, only chords labeled by $3$ and $4$ are terminal.

\begin{figure} \centering
  \includegraphics{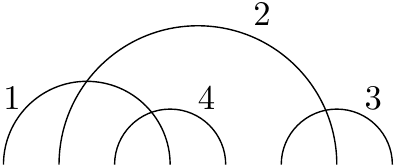}
  \caption{An example where the intersection order (indicated) is not the order by first end point.}\label{fig diff order}
\end{figure}

The main result of \cite{MYchord} was to solve the Dyson-Schwinger equation \eqref{eq s 2 case} as 
\begin{equation}\label{eq first dse sol}
G(x,L) = 1 - \sum_{C} \left(\sum_{i = 1}^{b(C)} f_{b(C)-i} \frac{(-L)^i}{i!} \right) x^{|C|}f_0^{|C|-\ell}\prod_{j=2}^{\ell}f_{t_j-t_{j-1}}
\end{equation}
where the sum is over connected diagrams $C$ and the terminal chords of $C$ are indexed by $b(C)=t_1 < t_2 < \cdots < t_{\ell}$ in the intersection order.  Note that this gives $G(x,L) - 1$ as a kind of strangely weighted generating function of connected diagrams. Its first terms are given by
\[ G(x,L) - 1 = f_0 L x + \left(f_1 L - f_0 \frac{L^2} 2 \right) f_0 x^2 + \dots ,\]
which respectively correspond to the one-chord diagram and the connected two-chords diagram.
In \cite{CYchord} two of us used tools of asymptotic combinatorics to better understand some of these parameters and in particular were able to conclude that in each of the next-to${}^{k}$-leading log expansions only $f_0$ and $f_1$ contribute.

We can compare~\eqref{eq first dse sol} to the original Feynman diagram expansion.  Both are expansions over combinatorial objects yielding the same series $G(x,L)$.  In the Feynman diagram expansion each diagram have a very complicated contribution, namely its Feynman integral, to the sum. Thus if we want to find properties like the asymptotic behavior of $G(x,L)$, the Feynman diagram expansion hides important features in the Feynman integrals and so only a combinatorial analysis can get us so far.  

  In the chord diagram expansion each chord diagram has a simple contribution to the sum -- just certain monomials in the $f_i$.  This means that, in principle, combinatorial tools could fully understand $G(x,L)$, and in practice we can make good progress as in \cite{CYchord}.  On the other hand, we have lost a physical interpretation for each diagram (the Feynman diagrams directly represent particles and their interactions); each chord diagram just represents some terms in expansions of some Feynman diagrams.

\medskip

In \cite{HYchord}, generalizing \cite{MYchord}, one of us with Markus Hihn solves the Dyson-Schwinger equation
%
\begin{equation}\label{eq gen case}
G(x, L) = 1 - \sum_{k \geq 1}x^kG(x, \partial_{-\rho})^{1-sk}(e^{-L\rho}-1)F_k(\rho)_{\rho=0}
\end{equation}
where 
$F_k(\rho) = \sum_{i\geq 0} a_{k, i} \rho^{i-1}$ and $s$ is a positive integer parameter. 
This Dyson-Schwinger equation corresponds to the case where we are still restricted to propagator corrections but now we can have any number of primitive Feynman graphs (the integer $k$ refers to their possible sizes, where the size is the dimension of the cycle space of the graph), and the number of insertion places is one less than $s$ times the size of the graph, where $s$ can be any positive integer.  

The main result of \cite{HYchord} is that \eqref{eq gen case} is solved by  
\begin{equation}
G(x,L) = 1 - \sum_C \left(\sum_{i = 1}^{b(C)} a_{d(b(C)), b(C)-i} \frac{(-L)^i}{i!} \right) w(C)
A(C) x^{\|C\|},
\label{eq:solutionDS}
\end{equation}
where the first sum runs over all connected diagrams $C$, carrying a positive integer weight $d(c)$ on each of its chords $c$, and such that the position of the first terminal chord is $b(C)$. As for the other parameters, $|C|$ denotes the number of chords; $\|C\|$ is the sum of the chord weights; $t_1=b(C)<t_2<\dots<t_\ell = |C|$ lists the positions of all the terminal chords in intersection order; 
\begin{equation}
w(C) = \prod_{m=1}^{|C|}\binom{d(m)s + \nu(m) -2}{\nu(m)};
\label{eq:defw}
\end{equation}
and  
\begin{equation} A(C) = \prod_{\substack{c \text{ not}\\ \text{terminal}}}a_{d(c), 0}\prod_{j=2}^{\ell}a_{d(t_j), t_j-t_{j-1}}.
\label{eq:defA}
\end{equation}
For the definition of $w(C)$, we need another parameter $\nu(c)$ which is discussed in the next subsection.  Note that again $G(x,L) - 1$ is a weighted generating function of connected diagrams.


  \begin{example}\label{eg chord params}
  As an example, take the diagram in Figure~\ref{fig diff order}.  Note that the terminal chords are chords $3$ and $4$, so $b(C)=3$.  If all the chords are weighted by $1$ then $A(C) = a_{1,0}^2a_{1, 1}$.  If the first chord is weighted by $2$ while the rest are weighted by $1$ then $A(C) = a_{1,0}a_{2,0}a_{1,1}$, while if the fourth chord is weighted by $2$ and the rest are weighted by $1$ then $A(C)= a_{1,0}^2a_{2,1}$.  Note that the weight of the first terminal chord does not affect $A(C)$.

  Continuing the example, note that if all chords are weighted by $2/s$ (since $s$ is an integer and the weights are nonnegative integers, this means $s=2$ and all weights are $1$ or $s=1$ and all weights are 2) then $w(C)$ is independent of $\nu$ and equals $1$ for all $C$.  More generally, $\nu(C)$ will be defined in the next subsection, but for now taking it as given that $\nu(1) = \nu(2) = 0$ and $\nu(3)=2$ and $\nu(4)=1$ then we can compute $w(C)$.  Say $s=2$ and all chords are weighted by $1$ except the third which has weight $2$, then $w(C) = \binom{3}{2} = 3$.  With the same weights but $s=3$ we get $w(C) = \binom{1}{0}\binom{1}{0}\binom{3}{2}\binom{7}{6} = 21$.
\end{example}
  
The theorem stating that \eqref{eq:solutionDS} solves \eqref{eq gen case} was shown by checking that the coefficients of the Dyson-Schwin\-ger equation and the eventual solution both satisfy the same recurrences with the same initial conditions. 
This was done in two steps.  First viewing each as a series in $L$ with coefficients which are functions of $\alpha$, these coefficients were shown to satisfy the same recurrence.
For the Dyson-Schwinger equation this $L$-recurrence is the renormalization group equation, an important equation for quantum field theories.  

The second step was 
to check that the linear coefficient in $L$ matches in the chord diagram expansion and the Dyson-Schwinger equation giving the initial conditions for the $L$-recurrence.  These coefficients are themselves series in $x$ and the proof is again done by matching recurrences.  However, this time the recurrence is more obscure, 
corresponding neither to a straightforward combinatorial decomposition nor to a standard physics identity.  Stated as an identity of weighted generating functions this equation becomes what will be numbered by \eqref{eq crazy formula} in the next subsection. In \cite{MYchord} and \cite{HYchord} we understood this formula by passing to a class of rooted trees but this class was messy and we were not able to understand the formula directly on the chord diagrams.   We will discuss this formula further, reinterpreting it in terms of rooted maps, and providing a combinatorial interpretation also at the level of rooted maps. This will show that the connection between chord diagrams and rooted maps can improve our understanding as the whole story can be formulated with one class of objects, namely rooted maps.

\subsection{Diagram parameters and binary trees}\label{subsec bin tree}

To see how the bijection $\theta$ from connected diagrams to bridgeless maps helps simplify the situation, we need to understand these additional parameters as they were originally defined.

The first thing we need is a variant of the boxed product (see Figure~\ref{fig var box} for an illustration).
\begin{definition}[Variant product for connected diagrams]\label{def var box}
  Let $C_1$ and $C_2$ be two connected diagrams and $i$ an integer between $1$ and $2|C_2|-1$.  The connected diagram $C_1 \varbox_i C_2$ is defined as
  \begin{align*}
    \RootChord i (C_2)& \text{ if $C_1$ is the one-chord diagram} \\
      \RootChord {i+\ell} \left(\DiagIns {\widehat{C_1}} i (C_2)\right) & \text{ if $C_1$ is of the form $\RootChord \ell (\widehat{C_1})$}
  \end{align*}
\end{definition}
 Decomposition according to this variant of the boxed product is known as the root-share decomposition in \cite{MYchord, HYchord}.

\begin{figure} \centering
  \includegraphics{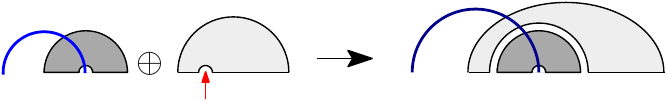}
  \caption{Schematic of the variant boxed product or root-share decomposition.}\label{fig var box}
\end{figure}

Note that this product gives the same recurrence of ordinary generating functions as the $\star$ product.  The $\star$ product is combinatorially more convenient, particularly for the asymptotic counting of \cite{CYchord}, while the $\varbox$ product is what was originally used in \cite{MYchord} and \cite{HYchord}.  The two different products clearly give a permutation $\iota$ of the set of the connected diagrams, taking the $1$-chord diagram to itself, and otherwise for a connected diagram $C=C_1\star_i C_2$, letting $\iota(C) = \iota(C_1)\varbox_i \iota(C_2)$.

The constructions below use the $\varbox$ product so as to align with the original definitions from \cite{MYchord} and \cite{HYchord}, but an analogous theory could be worked out from the $\star$ product.

The origin of the next definition is to carve out a class of rooted planar binary trees satisfying the same recurrence as comes from either connected diagram product.

\begin{definition}[Tree $\tau$(C)]
  The map $\tau$ from connected diagrams to rooted planar binary trees with labeled leaves is defined as follows.
The leaves of the tree correspond to the chords of the diagram; this correspondence is indicated by labeling the leaves by the indices of the chords in intersection order.
  \begin{itemize}
  \item The image of the one-chord diagram under $\tau$ is the rooted binary tree with one node.  This node is a leaf and is labeled $1$.
  \item Suppose $C$ is a connected chord diagram with at least 2 chords.  Write $C = C_1\varbox_k C_2$.  Let $T_1 = \tau(C_1)$ and $T_2=\tau(C_2)$.   Let $v$ be the $k$th vertex of $T_2$ in a pre-order traversal.  Let $T$ be the binary rooted tree obtained by beginning with $T_2$ and replacing $v$ with a new vertex which has the subtree rooted at $v$ as its right child and $T_1$ as its left child.  Relabel the leaves of $T$ to correspond to the same chords but as indexed in $C$, that is, the leaf $1$ from $T_1$ remains $1$, next come all the leaves of $T_2$ maintaining their relative order, and finally come all the other leaves of $T_1$ maintaining their relative order.
  \end{itemize}
\end{definition}

See Figure~\ref{fig tree} for two examples; see \cite{MYchord, HYchord} for many more examples.

It turns out that $\tau$ is one-to-one, though describing the inverse map is tricky, 
and the best characterization we have for the image of $\tau$ is rather complicated (see \cite{MYchord}).
Nonetheless, $\tau$ does have some nice properties.  By construction leaves correspond to chords under $\tau$ and vertices (including leaves) correspond to intervals.  Furthermore, these trees can see the $\nu$ parameter, and the most natural decomposition of trees -- the decomposition into the root along with the left and right subtrees -- gives the formula \eqref{eq crazy formula} below.

\begin{figure} \centering
  \includegraphics{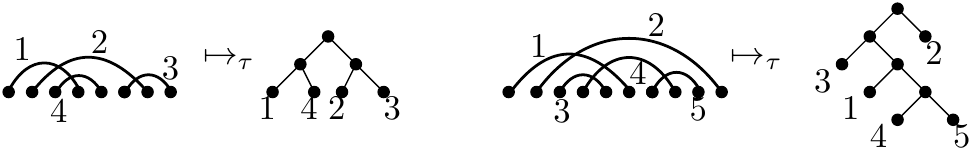}
  \caption{An example of the action of $\tau$.}\label{fig tree}
\end{figure}

Now we are ready for the original definition of $\nu$ (see \cite{HYchord} for more information on $\nu$).

\begin{definition}[Parameter $\nu(c)$]
  Let $C$ be a connected diagram and let $c$ be a chord of $C$.  Let $\nu(c)$ be the length of the path which begins at the leaf of $\tau(C)$ associated to $c$ and goes up and to the left as far as possible. If this leaf is a left child, then $\nu(c)=0$. 
\end{definition}

For the first tree in Figure~\ref{fig tree}, $\nu(1)=\nu(2)=0$, $\nu(3)=2$ and $\nu(4)=1$ agreeing with what was used in Example~\ref{eg chord params}.  For the second tree in Figure~\ref{fig tree}, $\nu(1) = 0$, $\nu(2)= 1$, $\nu(3) = 0$, $\nu(4)=0$, and $\nu(5)=3$.  At this stage it is not apparent what this parameter measures about the chord diagram.

We are finally ready for the promised mysterious formula.
To prove the main results of \cite{MYchord} and \cite{HYchord} we needed formulas which come from decomposing the binary tree associated to a diagram into its left and right subtrees.  Reversing this decomposition involves grafting the trees and shuffling \emph{some} of their labels (see \cite[Section 5]{HYchord} for this grafting, and the shuffling operation worked out in detail). We have no interpretation for the decomposition directly at the level of chord diagrams.
The formula in its more refined version is \cite[Proposition 6.10]{HYchord}:
\begin{equation}\label{eq crazy formula}
\begin{gathered}
\sum_{\substack{\|C\| = i+1\\b(C) = j+1\\\nu(b(C)) = n}} \hat w(C)A(C) = \sum_{k=1}^{i}\sum_{\ell=1}^j \binom{j}{\ell}\left(\sum_{\substack{\|D_1\|=k\\b(D_1)\geq \ell}}w(D_1)a_{d(b(D_1)), b(D_1)-\ell}A(D_1)\right)\\
\hspace{4cm} \times \ \left(\sum_{\substack{\|D_2\|=i-k+1\\b(D_2)=j-\ell+1\\\nu(b(D_2))=n-1}}\hat w(D_2)A(D_2)\right)
\end{gathered}
\end{equation}
where 
\[\hat w(C) = \prod_{m \neq b(C)}\binom{d(m)s + \nu(m) -2}{\nu(m)} = \frac{w(C)}{\binom{d(b(C))s + \nu(b(C)) -2}{\nu(b(C))}}.\]
We will give an interpretation of this equation in terms of maps in Subsection~\ref{ss:interpretation}.

Notice that the first terminal chord always has a special role to play in these quantum field theoretic chord diagram expansions.  It has its own special factor in the solutions to the Dyson-Schwinger equations \eqref{eq first dse sol} and \eqref{eq:solutionDS}.  In \eqref{eq crazy formula} on the left hand side we are ignoring the first terminal chord aside from fixing its size and index in the summation conditions.  Then in the decomposition on the right hand side the first terminal chord of the subdiagrams in the second sum remains the first terminal chord in the whole diagram and so does not contribute outside the summation conditions, but the first terminal chord of the subdiagrams in the first sum becomes a later terminal chord in the whole diagram and so it contributes a factor and more possibilities of first terminal chord must be summed over.

%
%

To step towards our new interpretation we need to associate numbers to chords in a more natural way, which the next subsection does.

\subsection{New interpretations on chord diagrams of the quantum field theoretic parameters} \label{subsec:new qft}

In this subsection, we describe an alternative notion of $\nu$-index, which we call the \textit{covering number} or \textit{$\omega$-index} and which is more meaningful at the chord diagram level, while still satisfying the above formulas. This new notion is not equivalent to the old one, so we have to establish some bijections to show that the statistics are indeed equidistributed.



\begin{definition}[Covering number $\omega(i)$]
  Let $C$ be a connected diagram. Fix an order $c_1 < \dots < c_n$ for the chords of $C$ (for example the intersection order). Proceeding through all the chords of $C$ in that order, mark all the intervals below the current chord with the index of that chord, replacing any previous marks.  At the end of this procedure, the intervals are partitioned among the chords according to their markings.   For $i \in \{1,\dots,n\}$, let $\omega(i)$ be the number of intervals labeled by $i$ in this way, minus $1$.
\end{definition}

An example of this construction for the intersection order is given in Figure~\ref{fig good intervals}. For this diagram, we have $\omega(1)=\omega(3) = 0$, $\omega(2)=\omega(5)=1$, $\omega(4) = 2$. Note that $\nu$ and $\omega$ are not equal. 

\begin{figure} \centering
  \includegraphics{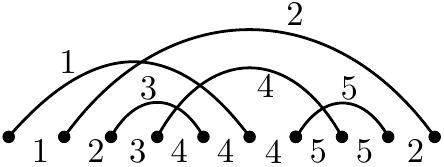}
  \caption{Covering numbers for the intersection order.}\label{fig good intervals}
\end{figure}

\begin{proposition}\label{prop nu omega equivalence}
  If we change every occurrence of $\nu$ to $\omega$, then, for the intersection order, Equation \eqref{eq crazy formula} still holds, and the function $G(x,L)$ defined by \eqref{eq:solutionDS} still solves \eqref{eq gen case}. 
\end{proposition}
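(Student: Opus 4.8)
The plan is to isolate precisely where $\nu$ enters and to reduce the whole statement to an equidistribution fact about $\nu$ and $\omega$. Inspecting \eqref{eq:solutionDS}, \eqref{eq:defw}, \eqref{eq:defA} and \eqref{eq crazy formula}, one sees that the chord statistic $\nu$ occurs \emph{only} through the binomial weights $w(C)$ and $\widehat{w}(C)$; every other ingredient — the size $\|C\|$, the position $b(C)$ of the first terminal chord, the terminal positions $t_1<\dots<t_\ell$, the factor $A(C)$, and the $L$-polynomial $\sum_i a_{d(b(C)),\,b(C)-i}(-L)^i/i!$ — is intrinsic to the (weighted) diagram and refers neither to $\nu$ nor to $\omega$. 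Consequently it suffices to prove that replacing $\nu$ by $\omega$ inside $w$ and $\widehat{w}$ leaves every weighted sum of diagrams occurring in \eqref{eq:solutionDS} and \eqref{eq crazy formula} invariant. I would package this invariance as a single weight-preserving bijection $\Phi$ on connected diagrams that fixes $\|\cdot\|$, $b$, the terminal data (hence $A$ and the $L$-factor) and the per-chord weights $d$, and that transports the covering numbers to the tree numbers, i.e.\ $w_\omega(C)=w_\nu(\Phi(C))$ and $\widehat{w}_\omega(C)=\widehat{w}_\nu(\Phi(C))$. Given such a $\Phi$, the $\omega$-form of \eqref{eq:solutionDS} equals the original $\nu$-form term by term after applying $\Phi$, so it defines the same series $G(x,L)$ and therefore still solves \eqref{eq gen case}; and if $\Phi$ is moreover compatible with the root-share (i.e.\ $\varbox$) splitting used on the right-hand side of \eqref{eq crazy formula}, that identity transports as well. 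This route avoids re-examining the statistic-free renormalization-group step of the original proof entirely.

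To build $\Phi$ I would work recursively through the variant boxed product of Definition~\ref{def var box}, the decomposition that governs both the binary tree $\tau(C)$ and the original derivation of \eqref{eq crazy formula}. The base case is the one-chord diagram, which $\Phi$ fixes. For $C=C_1\varbox_k C_2$ the essential input is a structural lemma describing how each statistic changes under a single insertion. For $\nu$ this is transparent from $\tau(C)$: since $\tau(C_1)$ is grafted as a \emph{left} subtree, the numbers $\nu$ of the chords of $C_1$ are unchanged, while the unique new right-child edge created by the insertion increases by exactly one the $\nu$-value of a single chord of $C_2$ (the one at the bottom of the right spine through the affected vertex), all other values being unchanged — consistent with the fact that $\sum_c\nu(c)=|C|-1$ grows by one. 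I would prove the analogous rule for $\omega$ directly on diagrams, tracking how the interval-overwriting sweep in intersection order redistributes intervals when $C_1$ is inserted into the $k$th interval of $C_2$ and the chords are reindexed, and then define $\Phi$ on $C$ from its values on $C_1$ and $C_2$ so as to reconcile the two insertion rules while leaving the intrinsic terminal/order/weight data fixed.

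The main obstacle is exactly this reconciliation. The number $\nu$ is read off a global tree structure, whereas $\omega$ is produced by a global left-to-right overwriting of intervals; these are genuinely different bookkeepings — indeed $\nu\neq\omega$ chord by chord, as Figure~\ref{fig good intervals} already shows — so $\Phi$ cannot simply commute with $\varbox$, for were it to do so it would fix every diagram and force $\nu=\omega$. The delicate points are to show that under $\varbox_k$ the multiset of pairs $(d(m),\omega(m))$ over the chords of $C$ splits into the contributions of $C_1$ and $C_2$ in the same combinatorial pattern as the pairs $(d(m),\nu(m))$ do, to handle correctly the special status of the first terminal chord $b(C)$ (whose statistic is the one omitted in $\widehat{w}$ and which also carries the distinguished $L$-factor), and to verify that the resulting $\Phi$ is compatible with the specific $D_1,D_2$ splitting on the right of \eqref{eq crazy formula}. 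Once this $\varbox$-level matching of the $\nu$- and $\omega$-bookkeepings is established, both assertions of Proposition~\ref{prop nu omega equivalence} follow by transporting the already-proven $\nu$-identities through $\Phi$.
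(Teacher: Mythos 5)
Your proposal follows essentially the same route as the paper: its Lemma~\ref{lem match nu} proves, by induction on the $\varbox$ (root-share) decomposition and exactly your two insertion rules (each chord $i$ of $C_2$ gains $+1$ for precisely $v_i+1$ choices of $k$, both on the $\tau$-tree side and on the interval-overwriting side), that connected diagrams with prescribed terminal-chord positions and prescribed $\nu$-vector are equinumerous with those having the same prescribed $\omega$-vector --- which is precisely the equidistribution your bijection $\Phi$ would certify, the paper merely phrasing it as a cardinality induction (noting, as you should too, that the terminal positions of $C_1\varbox_k C_2$ do not depend on $k$). One remark: your closing worry about making $\Phi$ compatible with the $D_1,D_2$ splitting in \eqref{eq crazy formula} is unnecessary, since each of the three sums in that identity is a generating function in the invariant data (sizes, weights $d$, terminal positions, and the $\nu$/$\omega$ vector), so equidistribution alone renders each sum separately invariant under the swap, which is exactly how the paper concludes.
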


The proof of the proposition directly derives from the following lemma which says that the number of diagrams with the same $\nu$ and $\omega$ vectors are equal. Moreover, this remains true if we fix the indices of the terminal chords for the intersection order.
 
\begin{lemma}\label{lem match nu}
 Let $n$ be an integer. Given an $n$-vector $\overrightarrow v = (v_1,\dots,v_n)$, 
  and  a subset $S$ of $\{1, \ldots, n\}$, we denote by $A_{\overrightarrow v,S}$ (resp. $B_{\overrightarrow v,S}$) the set of connected diagrams of size $n$ such that the positions of the terminal chords for the intersection order are given by $S$, and such that $\nu(i) = v_i$ (resp. $\omega(i) = v_i$) for every $i \in \, \{1,\dots,n\}$. Then, for every vector $\overrightarrow v$ and subset $S$, the cardinal of $A_{\overrightarrow v,S}$ is the same as $B_{\overrightarrow v,S}$.
\end{lemma}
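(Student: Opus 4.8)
The plan is to prove the stronger claim that the joint distribution of the statistic vector together with the set of terminal positions is identical for $\nu$ and for $\omega$, by induction on the size $n$, using the fact (recalled after Definition~\ref{def:prod} and Definition~\ref{def var box}) that every connected diagram of size $n>1$ decomposes uniquely as a variant boxed product $C = C_1 \varbox_i C_2$ with $C_1,C_2$ connected and $1 \le i \le 2|C_2|-1$. Since the intersection order and the set $S$ of terminal positions are intrinsic to $C$, the first step is to record, once and for all, how these data are determined by $C_1$, $C_2$ and the index $i$: where the chords of $C_1$ and $C_2$ land in the intersection order of $C$ (readable from the relabeling rule used to define $\tau$, i.e. chord $1$ of $C_1$ stays first, then come the chords of $C_2$, then the remaining chords of $C_1$), and which chords are terminal in $C$ (in particular the root chord is never terminal for $n>1$). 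This bookkeeping is the same whether we track $\nu$ or $\omega$, so it reduces the lemma to comparing how the two statistic \emph{vectors} recurse under a single boxed-product step.

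Next I would compute the two recursions separately. For $\nu$, I would exploit that it is a pure tree statistic on $\tau(C)$: grafting $\tau(C_1)$ as the left child at the $i$th preorder vertex of $\tau(C_2)$ leaves every leaf's up-and-left arm length unchanged except along the grafting path, so only a single, index-$i$-determined chord has its $\nu$-value modified, all other values being inherited verbatim from $C_1$ and $C_2$. For $\omega$ I would instead argue directly on the covering procedure: passing from $C_2$ (with $\widehat{C_1}$ inserted in interval $i$, then a new root added) only re-marks the intervals that were previously governed by the chord covering interval $i$, so again exactly one index-$i$-determined chord changes its covering number while the rest are inherited. The aim is then to reconcile these two local updates under the stated bijection between the intervals of $C$ and the vertices of $\tau(C)$, so that the distribution of $(\text{vector},S)$ over all $C$ with fixed factors $(C_1,C_2)$ is the same for $\nu$ and for $\omega$.

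The hard part will be precisely this matching. The covering number is an overwriting-based quantity living on the diagram, whereas $\nu$ is an up-and-left-arm quantity living on the tree $\tau(C)$, and the lemma is only an equidistribution statement — the two need not agree on a fixed diagram. Both are forced to have the same total $\sum_i \nu(i)=\sum_i \omega(i)=n-1$ and the same interaction with the terminal set, but verifying that their single-step increments correspond requires a careful analysis of how the overwriting in the covering procedure mirrors the preorder grafting in $\tau$; if the two increments do not coincide pointwise I would instead exhibit, for fixed $(C_1,C_2)$, a reindexing bijection $i \mapsto i'$ carrying the $\nu$-update to the $\omega$-update, which suffices to close the induction. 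Throughout, the special status of the first terminal chord — whose index $b(C)$ is singled out by the summation conditions defining $A_{\overrightarrow v,S}$ and $B_{\overrightarrow v,S}$ — must be tracked, since it is the one position whose value and location are constrained separately, and one must check it is preserved under whichever correspondence realizes the equidistribution.
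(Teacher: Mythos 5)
Your skeleton matches the paper's proof: induction on the unique $\varbox$-decomposition $C = C_1 \varbox_k C_2$, the observation that the intersection order and the terminal set of $C$ are determined by the factors (independently of $k$ and of which statistic is tracked), and a single-increment analysis showing that one boxed-product step raises exactly one coordinate of the statistic vector by $1$. Your two local analyses are also correct: for $\nu$, the grafted vertex lengthens exactly one leftward path of $\tau(C_2)$; for $\omega$, the split $k$th interval of $C_2$ keeps its old mark at both ends while everything inserted between them is overwritten by the non-root chords of $C_1$. But there is a genuine gap at exactly the point you flag as the hard part, and both of your proposed ways to close it are unworkable as stated. The claim that ``the distribution of $(\text{vector},S)$ over all $C$ with fixed factors $(C_1,C_2)$ is the same for $\nu$ and for $\omega$'' is false: a fixed diagram $C_2$ generally has different $\nu$- and $\omega$-vectors (see Figure~\ref{fig nu vs omega}, where the two statistics attach the vectors $(0,1,1)$ and $(0,0,2)$ to \emph{different} diagrams), so the multiset of coordinates incremented as $k$ varies differs for the same pair, and consequently no reindexing bijection $k \mapsto k'$ on a single fixed pair $(C_1,C_2)$ can carry the $\nu$-update to the $\omega$-update either.

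The missing idea is a cross-class comparison together with a multiplicity count. Fix vectors $\overrightarrow u$, $\overrightarrow v$ and terminal sets $S_1$, $S_2$, and compare the products $C_1 \varbox_k C_2$ with $(C_1,C_2) \in A_{\overrightarrow u,S_1} \times A_{\overrightarrow v,S_2}$ against the products $C'_1 \varbox_k C'_2$ with $(C'_1,C'_2) \in B_{\overrightarrow u,S_1} \times B_{\overrightarrow v,S_2}$ --- two \emph{different} pairs of diagrams, matched in equal numbers by the induction hypothesis applied to the factor classes. The quantitative fact that closes the argument, which your proposal never states, is that the increment distribution over $k$ depends only on the vector $\overrightarrow v$ of the second factor and not on the diagram realizing it: in the $\nu$-world the number of choices of $k$ incrementing coordinate $i$ is $v_i+1$, because the leftward path from leaf $i$ in $\tau(C_2)$ contains $v_i+1$ vertices; in the $\omega$-world it is also $v_i+1$, because $C'_2$ contains exactly $v_i+1$ intervals marked $i$. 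Hence for every vector $\overrightarrow w$ of the form $(0,v_1,\dots,v_{i-1},v_i+1,v_{i+1},\dots,v_{n_2},u_2,\dots,u_{n_1})$ both worlds contribute exactly $v_i+1$ products to the class of $\overrightarrow w$ (and zero to any other vector), with the same terminal set; summing over all $\overrightarrow u$, $\overrightarrow v$, $S_1$, $S_2$ with $n_1+n_2=n$ completes the induction. Without this count, your plan stalls exactly where you anticipated it might.
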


For example, there are three connected diagrams with $3$ chords and with only the last chord as a terminal chord.  These diagrams are illustrated in Figure~\ref{fig nu vs omega} with their values of $\nu$ and $\omega$ written as vectors along with the constructions to determine the vectors.  Note that for both $\nu$ and $\omega$ there is one diagram corresponding to the vector $(0, 1, 1)$ and two corresponding to $(0, 0, 2)$ but which diagrams are which is not the same.

\begin{figure}
\center
  \includegraphics[width=0.9 \textwidth]{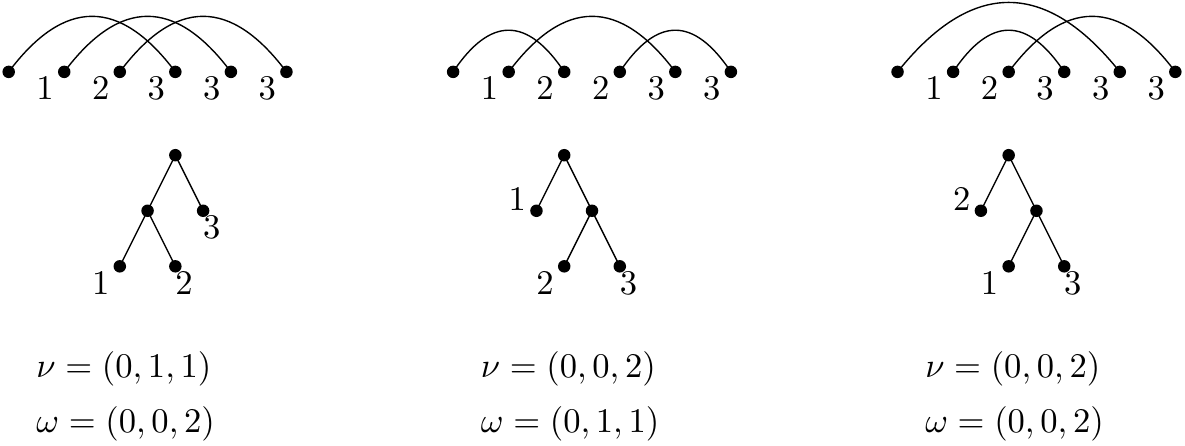}
  \caption{Connected diagrams on $3$ chords with only the last chord terminal along with some associated information.}
  \label{fig nu vs omega}
\end{figure}

\begin{proof}
The proof is by induction on the number of chords. The result clearly holds for $n=1$.

Consider two vectors $\overrightarrow u = u_1,\dots,u_{n_1}$ and $\overrightarrow v = v_1,\dots,v_{n_2}$ , and two subsets $S_1 \subseteq \{1, \ldots, n_1\}$ and $S_2 \subseteq \{1, \ldots, n_2\}$. We suppose by induction that 
$|A_{\overrightarrow u,S_1}|=|B_{\overrightarrow u,S_1}|$ and $|A_{\overrightarrow v,S_2}|=|B_{\overrightarrow v,S_2}|$.

We are going to prove that the $\nu$-indices among the diagrams of the form $C_1 \varbox_k C_2$ with $C_1 \in \, A_{\overrightarrow u,S_1}$, $C_2 \in \, A_{\overrightarrow v,S_2}$ and $k \in \, \{1,\dots,n\}$, are distributed in the same way as the $\omega$-indices among the diagrams  of the form $C'_1 \varbox_k C'_2$ with $C'_1 \in \, B_{\overrightarrow u,S_1}$, $C'_2 \in \, B_{\overrightarrow v,S_2}$ and $k \in \, \{1,\dots,n\}$.
The induction will then be shown by summing over all vectors $\overrightarrow{u}$, $\overrightarrow{v}$ and subsets $S_1,S_2$ such that $n_1 + n_2 = n$. Remark that in diagrams of the form $C = C_1 \varbox_k C_2$, the positions of the terminal chords in $C$ for the intersection order only depend on $S_1$ and $S_2$; this is why we only need to focus on the $\nu$-indices and the $\omega$-indices.

%

 Fix $C_1 \in \, A_{\overrightarrow u,S_1}$ and $C_2 \in \, A_{\overrightarrow v,S_2}$. When constructing $\tau(C_1\varbox_k C_2)$ from $\tau(C_1)$ and $\tau(C_2)$, we add a new vertex along one of the leftwards paths,  so we increase exactly one $\nu$-index by $1$.  Furthermore, running over all $k$ means performing this path lengthening once at each vertex of $\tau(C_2)$. We can more precisely observe that, for every $i \in \{1,\dots,n_2\}$, there are $v_i+1$ possibilities among the choices of $k$ to increase $\nu(i)$ by $1$, since, by definition, the leftward path  starting at the leaf labeled by $i$ contains $v_i+1$ vertices. Eventually, we notice that for every vector $\overrightarrow{w}$ of the form $(u_1=0,v_1,\dots,v_{i-1},v_i+1,v_{i+1}\dots,v_{n_2},u_2,\dots,u_{n_1})$, the set $A_{\overrightarrow{w},S}$ contains exactly $v_i+1$ diagrams of the form $C_1\varbox_k C_2$, and zero such diagrams if $\overrightarrow{w}$ has a different form.
 

  Now consider $C = C_1'\varbox_k C_2'$ where $C'_1 \in \, B_{\overrightarrow u,S_1}$ and  $C'_2 \in \, B_{\overrightarrow v,S_2}$ are fixed. For the intersection order of $C$, every non-root chord of $C'_1$ comes after any chord of $C'_2$.  Thus, since the non-root chords of $C'_1$ are below every chord of $C'_2$, the marking of the intervals of $C'_1$ (except the first one) will overwrite the marking of the intervals delimited by the chord of $C'_2$. So, except \textit{a priori} for the root chord, the $\omega$-index associated to the chords of $C'_1$ will remain unchanged in $C$. However the $\omega$-index for the root chord is always $0$, because the label of every interval below the root chord other than the first one will be overwritten by other chords of $C$.
  
  Concerning the intervals delimited by $C_2'$, the marking will be unchanged except for the $k$th leftmost interval of $C_2'$, where the insertion of $C'_1$ occurred, splitting this interval in two. 
  The marking from the  non-root chords of $C_1'$ will occur and this will overwrite all the labels inserted into interval $k$, leaving just the two ends to be marked as the $k$th interval was in $C_2'$. So if the label of the $k$th interval was $i$, then $\omega(i)$ will be increased by 1 and this is the only value of $\omega$ that changes. But, as we run over $k$, there are exactly $v_i+1$ intervals labeled by $i$ in $C'_2$.  Therefore, for every vector $\overrightarrow{w}$ of the form $(0,v_1,\dots,v_{i-1},v_i+1,v_{i+1}\dots,v_{n_2},u_2,\dots,u_{n_1})$, the set $B_{\overrightarrow{w},S}$ contains exactly $v_i+1$ diagrams of the form $C_1'\varbox_k C_2'$, and zero such diagrams if $\overrightarrow{w}$ has a different form.

Comparing the results for $C_1\varbox_k C_2$ and $C'_1\varbox_k C'_2$ over all $k$ enables us to conclude.
\end{proof}

The ideas of this last proof are closely related to some unpublished ideas of one of us along with Markus Hihn \cite{Hpersonal}.
Lemma~\ref{lem match nu} enables us to have a direct proof of Proposition~\ref{prop nu omega equivalence}.

\begin{proof}[Proof of Proposition~\ref{prop nu omega equivalence}.]
  Lemma~\ref{lem match nu} tells us that the generating functions of connected chord diagrams counted by terminal chords and $\nu$ vectors is the same with $\omega$ vectors instead.  An additional integer weight on each chord carries through the constructions with no changes.  Examples of such generating functions then, with some very particular choices of functions of these parameters, are $G(x,L)$ and the sums appearing in \eqref{eq crazy formula}, hence these formulas cannot tell the difference between $\nu$ and $\omega$.
\end{proof}

Lemma~\ref{lem match nu} also proves a conjecture of Hihn \cite[Section 3.2.1]{Hphd}, which states that the number of chord diagrams $C$ with a fixed set of terminal chords and $\nu(|C|)=m$ is equal to the number of chord diagrams with the same set of terminal chords and where the vertex in the intersection graph corresponding to the last chord has $m$ neighbors.  The last vertex having $m$ neighbours is the same as saying the last chord crosses $m$ other chords.  Furthermore in the algorithm to build $\omega$ the last chord marks all the intervals under it and the number of intervals under a chord is one more than the number of chords it crosses.  Therefore Hihn's conjecture is exactly that the number of chord diagrams $C$ with a fixed set of terminal chords and $\nu(|C|) = m$ is equal to the number of chord diagrams with the same set of terminal chords and $\omega(|C|) = m$.  This statement is a corollary of Lemma~\ref{lem match nu}.  Some of Hihn's attempts to prove the conjecture led to the arguments of \cite{Hpersonal} which were generalized into Lemma~\ref{lem match nu}.  

Using $\omega$ in place of $\nu$ makes the parameters of \eqref{eq crazy formula} more natural, but what about the decomposition itself: what chord diagram construction builds a connected diagram out of two connected diagrams in binomially many ways.  For the $\nu$-index, the binomial coefficient counted shuffles of a subset of the labels of $\tau(C)$.  For $\omega$ the rooted maps will save the day: there we have a direct interpretation involving shuffling the edges around the root vertex, see Figure~\ref{fig:principle}.  Rooted maps are the one place where everthing becomes relatively natural.  To get there we need one last change of order on the chords.

\subsection{Changing the ordering of the chords}\label{subsec chord orders}

The intersection order does not induce a nice natural description when it is transposed to the set of combinatorial maps via the bijection $\theta$. In this subsection, we describe a new ordering on the chords of an indecomposable diagram for which Formulas~\eqref{eq:solutionDS} and~\eqref{eq crazy formula} still work, \textit{and} have a simple interpretation in the world of maps.

\begin{definition}[Peeling order]
The \definand{peeling order} of an indecomposable diagram $D$ is defined as follows. 
  \begin{itemize}
  \item The root chord of $D$ is the first chord in the peeling order.
  \item Remove the root chord of $D$. The result is not necessarily indecomposable. Let $D_1, D_2, \dots, D_k$ be the indecomposable diagrams we obtain from left to right.
  \item For the peeling order of $D$, after the root chord come all the chord of $D_k$ ordered recursively in the peeling order, then all the chords of $D_{k-1}$ ordered recursively, and so on.  
  \end{itemize}
\end{definition}

\begin{figure}[!ht]
\centering
\includegraphics[width = 0.8 \textwidth]{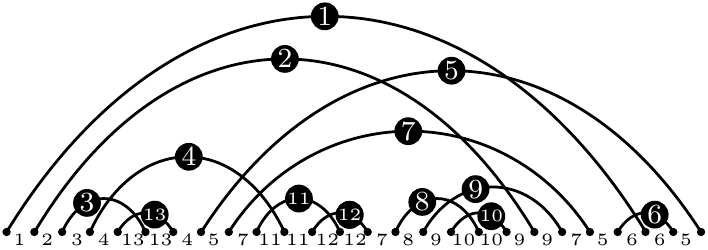}
\caption{The peeling order of a connected diagram. The covering numbers are also indicated under the intervals.}
\label{fig:peeling}
\end{figure}

An example of the peeling order is given by Figure~\ref{fig:peeling}.
Note that like the intersection order and the order by first endpoint, the peeling order extends the partial order on chords induced by the intersection graph.

Naturally, any connected diagram inherits a $\omega$-indexing from the peeling order. However, the vector distribution over all connected diagrams is not the same as for the intersection order\footnote{We have observed that a chord with a high $\omega$-index tends to be smaller in the intersection order than in the peeling order.}. Luckily, the parameters appearing in Equations~\eqref{eq:solutionDS} and~\eqref{eq crazy formula} do
 not require the exact ordering of the chords, but weaker statistics, such as the multiset of the gaps between two consecutive terminal chords. It turns out that these weaker statistics agree for the intersection and the peeling order, implying that the quantum field theory formulas still hold for the peeling order.
This also emphasizes that the gaps between terminal chords are the more natural chord diagram parameter rather than the indices of the terminal chords themselves.

\begin{proposition}\label{prop:peeling}
If we change every occurrence of $\nu$ to $\omega$, then, for the \emph{peeling order}, Equation~\eqref{eq crazy formula} still holds, and the function $G(x,L)$ defined by \eqref{eq:solutionDS} still solves \eqref{eq gen case}. 
\end{proposition}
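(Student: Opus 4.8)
The plan is to reduce Proposition~\ref{prop:peeling} to an equidistribution statement in the spirit of Lemma~\ref{lem match nu}, and then to prove that equidistribution by the same inductive scheme, now carried out for the peeling order. First I would isolate exactly which features of the chord order the formulas actually see. In \eqref{eq:solutionDS} the quantity $w(C)$ of \eqref{eq:defw} is a product over chords of binomials depending only on the pair $(d(c),\omega(c))$, while $A(C)$ of \eqref{eq:defA} is a product whose factors depend only on whether $c$ is terminal, on $d(c)$, and --- for a terminal chord --- on the gap $t_j-t_{j-1}$ to its predecessor terminal chord; the only remaining dependence on the order is through $b(C)=t_1$ and the weight $d(t_1)$. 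Crucially, the generating function sums over all integer weightings, and since $x^{\|C\|}=\prod_c x^{d(c)}$ and every other factor is attached to a single chord, the sum over weightings factorizes as a product over chords. Hence, after summing over weights, the contribution of a diagram depends only on $b(C)$, on $\omega(t_1)$, and on the multiset of triples $\{(\text{terminal or not},\,\omega(c),\,\text{gap}(c)):c\neq t_1\}$. Since Proposition~\ref{prop nu omega equivalence} already validates the formulas for the intersection order with $\omega$, it suffices to prove that this coarse joint statistic is equidistributed over connected diagrams when computed via the peeling order versus the intersection order.

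I would prove the equidistribution by induction on the number of chords, using the $\varbox$ decomposition $C=C_1\varbox_k C_2$ exactly as in the proof of Lemma~\ref{lem match nu}. Two facts drive the intersection-order induction there: a description of how the chords of $C_1$ and $C_2$ interleave in the order of $C_1\varbox_k C_2$ (every non-root chord of $C_1$ follows every chord of $C_2$), and the observation that forming the product raises exactly one $\omega$-index by one, with the value $\omega(i)$ raised with multiplicity $v_i+1$ as $k$ ranges. The heart of the present proof is to re-establish the analogous two facts for the peeling order: a recursive description of the peeling order of $\RootChord{k+\ell}\left(\DiagIns {\widehat{C_1}} k (C_2)\right)=C_1\varbox_k C_2$ (see Definition~\ref{def var box}) in terms of the peeling orders of $C_1$ and $C_2$, together with the effect of the product on the covering numbers. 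Terminality is intrinsic to the intersection graph and so independent of the chosen order, whence the terminal/non-terminal labels match automatically; the content is to show that the induced multiset of gaps between consecutive terminal chords, and the single $\omega$-increment, behave under $\varbox$ in the peeling order exactly as they do in the intersection order, so that the induction of Lemma~\ref{lem match nu} transfers verbatim.

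The main obstacle is precisely this recursive description. The peeling order removes the root chord and then processes the resulting \emph{indecomposable} pieces from right to left, whereas the intersection order processes the \emph{connected components} of the root-deletion from left to right; the two decompositions and their traversal directions are genuinely different, and individual $\omega$-values and terminal positions really do shift between the orders (as the footnote in Subsection~\ref{subsec chord orders} already signals), which is why the statement is distributional rather than diagram-by-diagram. I would therefore spend the bulk of the argument tracking, through the insertion $\DiagIns {\widehat{C_1}} k (C_2)$, how the right-to-left peeling traversal distributes the chords of $C_1$ and $C_2$, how the interval-marking procedure redistributes the covering numbers --- in particular checking that running over $k$ raises $\omega(i)$ with multiplicity $v_i+1$, just as on the intersection side --- and how the gaps between terminal chords accumulate. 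Once these quantities are shown to match the intersection-order computation, summing over all decompositions $C_1\varbox_k C_2$ closes the induction, and attaching arbitrary chord weights carries through with no change exactly as in the proof of Proposition~\ref{prop nu omega equivalence}, since the correspondence is established chord by chord.
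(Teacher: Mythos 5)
Your proposal gets the architecture right, and at its technical core it coincides with the paper's proof: your reduction to coarse statistics (size and weights, $b(C)$ together with $d(t_1)$ and $\omega(t_1)$, the multiset of pairs $(d(c),\omega(c))$, the terminal/non-terminal labels, and the weighted gap monomial) matches exactly the list of statistics (1)--(4) that the paper isolates, and the ``main obstacle'' you identify --- tracking the right-to-left peeling traversal through the $\varbox$ decomposition and the accumulation of gaps between terminal chords --- is precisely the paper's inductive proof that $\alpha_{inter}(C)=\alpha_{peel}(C)$, which is indeed the bulk of its argument (the cases where the re-inserted piece $D$ is or is not the rightmost indecomposable component, the observation that the chord peeled just before $D$ is terminal, and the resulting gap bookkeeping). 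So the facts your plan needs are all true, and your scheme would close.

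The genuine divergence is your framing, and it rests on a misreading. You assert that ``individual $\omega$-values and terminal positions really do shift between the orders \dots\ which is why the statement is distributional rather than diagram-by-diagram,'' and you accordingly plan an equidistribution induction over classes in the style of Lemma~\ref{lem match nu}. What the footnote in Subsection~\ref{subsec chord orders} signals is that the \emph{indexing} shifts: the $\omega$-\emph{vector} and the positions $t_j$ of the terminal chords differ between the two orders. But the coarse statistics you reduced to are in fact equal \emph{diagram by diagram}, and this is what the paper proves: each chord's covering number is order-independent, because the owner of any interval (the last chord covering it) is decided by the relations ``above before below'' and ``crossing from the left before crossing from the right,'' which both orders respect; and the two orders literally coincide up to the first terminal chord, so $b(C)$ agrees per diagram. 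This stronger statement renders your equidistribution scaffolding (induction classes indexed by coarse statistics, multiplicity matching of the $v_i+1$ increments over $k$) unnecessary --- it handles points (2) and (3) by short structural arguments and reserves the $\varbox$ induction for the gap monomial alone. Your route buys nothing extra here, costs additional bookkeeping, and, as written, defers rather than executes the one delicate verification (the recursive description of the peeling order of $C_1\varbox_k C_2$); but since that verification is exactly what the paper supplies and nothing in your plan would fail, I would classify this as a correct plan whose hard core is the paper's own, wrapped in weaker distributional packaging.
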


\begin{proof}
Notably using Proposition~\ref{prop nu omega equivalence}, we saw that Formulas~\eqref{eq crazy formula} and~\eqref{eq:solutionDS} only depend on some statistics on the connected diagrams $C$ that are:
\begin{itemize}
\item[(1)] the number of chords $|C|$, the sum of the chord  weights $\|C\|$, the product \[\prod_{c\textrm{ not terminal}} 
a_{d(c),0}\] (which appears in the definition of $A(C)$ -- see Equation~\eqref{eq:defA});
\item[(2)] the position of the first terminal chord for the intersection order $b(C)$;
\item[(3)] the multiset formed by the pairs $(d(k),\omega(k))$, where $d(k)$ is the weight associated to the $k$th 
chord in the intersection order, and $\omega(k)$ its covering number for the intersection order (used to define $w(C)$ -- see Equation~\eqref{eq:defw}); 
\item[(4)] the monomial $\alpha(C) = \prod_{j=2}^\ell a_{d(t_j),t_j - t_{j-1}}$, where $t_1=b(C)<t_2<\dots<t_\ell = |C|$ lists the positions of all the terminal chords in intersection order.
\end{itemize} 
We are going to prove that these statistics are preserved diagram by diagram when we replace the intersection order by the peeling order, which is sufficient to show the proposition. 

We can first check it on an example. Let us consider the diagram of Figure~\ref{fig:intersec} where we have put a weight $2$ on chords with labels $5$, $6$, $8$ (for the intersection order) and a weight $1$ on the remaining chords. We have \begin{itemize}
\item[(1)] $|C|=13$, $\|C\|=16$,  $\prod_{c\textrm{ not terminal}} a_{d(c),0}=a_{1,0}^7$; 
\item[(2)] $b(C) = 5$; 
\item[(3)] the multiset $ \{(d(k),\omega(k))\}$ contains 4 times $(1,0)$, 5 times $(1,1)$, once $(1,2)$, once $(2,1)$, twice $(2,2)$; 
\item[(4)] $\alpha(C) = a_{1,1}^3 a_{1,2} a_{2,1} a_{2,2}$.
\end{itemize}
We can then verify that the same diagram but with the peeling order (see Figure~\ref{fig:peeling}) satisfies the same equalities. However remark that the positions of the terminal chords differ between the peeling order and the intersection order (these positions are given by $5,6,7,9,10,12,13$ for the peeling order, and by $5,6,8,9,11,12,13$ for the intersection order).

\begin{figure}[!ht]
\centering
\includegraphics[width = 0.8 \textwidth]{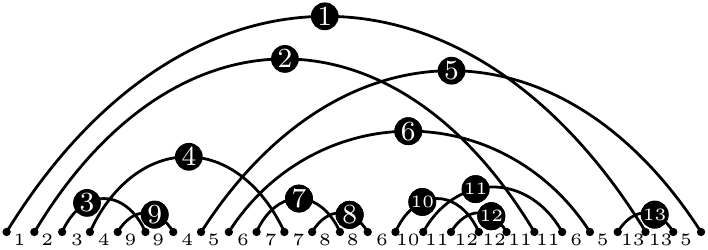}
\caption{The intersection order version of the diagram of  Figure~\ref{fig:peeling}.}
\label{fig:intersec}
\end{figure}

Return now to the proof.
Obviously, the statistics listed in (1) do not depend on the order.

As for the position of the first terminal chord  given by (2), we can observe that the intersection order and the peeling order coincide for the first chords until the first terminal chord. Indeed, in both cases, after putting in first position the root chord and removing it, the first diagram we recursively sort is either the topmost connected component $C_\uparrow$ (for the intersection order), or the rightmost indecomposable diagram $D_\rightarrow$ (for the peeling order). The diagram $C_\uparrow$  is included in $D_\rightarrow$ and will be peeled first in $D_\rightarrow$ because the connected components below $C_\uparrow$  are to the left of the rightmost endpoint of $C_\uparrow$ (so they will appear at some point of the peeling of $D_\rightarrow$ to the left of what remains of $C_\uparrow$). Thus, the position of the first terminal chord remains the same for the intersection and peeling order.

Now let us consider the multiset $\{(d(k),\omega(k))\}_k$ described by (3). 
Remark that the covering number of a chord $c$ will only depend on the chords above/below
$c$ in the diagram, and the chords intersecting $c$. But both for  intersection and peeling order, a chord $c_\downarrow$ which is below a chord $c_\uparrow$ will satisfy $c_\uparrow < c_\downarrow$, while a chord $c_\leftarrow$ intersecting from the left a chord $c_\rightarrow$ will satisfy $c_\leftarrow < c_\rightarrow$.
Therefore, the covering number associated to any chord will remain the same for the intersection and the peeling order, hence the equality of the multisets.

The point (4) is the most delicate equality to establish. To remove the ambiguity, let $\alpha_{inter}(C)$ be the version of $\alpha(C)$ for the intersection order, and $\alpha_{peel}(C)$ be the one for the peeling order. We are going to prove by induction that $\alpha_{inter}(C)=\alpha_{peel}(C)$ for any connected diagram $C$. Since the base case is clear, we assume that $C$ has at least $2$ chords. Let $C_1$, $C_2$, $i$ be such that $C = C_1 \varbox_i C_2$. We assume that $C_1$ is not reduced to one chord, since it is easy to conclude by induction in that case.

First we observe that, in the intersection order, each non-root chord of $C_1$ is after any chord of $C_2$ (by definition). So if $C_2$ exactly contains $j$ terminal chords, then the terminal chords with positions $t_1,t_2,\dots,t_j$ in $C$ are in $C_2$ (diagram in which the terminal chords have positions $t_1-1,t_2-1,\dots,t_j-1$), and the other ones are in $C_1$. Moreover, the last chord of a connected diagram for the intersection order is terminal, hence $t_j = |C_2|+1$. Additionally, if $t'_1=b(C_1),\dots,t'_k$ denote the positions of the terminal chords in $C_1$, we can check that $t_{j+p} = t'_p + |C_2| $ for $p \in \, \{1,\dots,k\}$. Taking all this into account, we obtain
\begin{align*}
\alpha_{inter}(C) & = \alpha_{inter}(C_2) \times a_{d(t_{j+1}),t_{j+1}-t_j} \times \alpha_{inter}(C_1) \\ & = a_{d(b(C_1)),b(C_1)-1} \  \alpha_{inter}(C_1) \  \alpha_{inter}(C_2).
\end{align*}

Now let us consider the peeling order. Let $D$ be the diagram $C_1$ with its root chord removed. When we remove the root chord of $C$, the diagram $D$ is left somewhere in the diagram $C_2$. When we continue to peel $C$, the chords of $D$ will remain unconsidered until the point where $D$ appears as one of the indecomposable diagrams $D_1, D_2, \dots, D_k$. There are then two possibilities: either $D=D_k$ and then the chord preceding the first chord of $D$ for the peeling order is a chord going over $D$ and ending at the rightmost point of the diagram; or $D = D_j$ with $j < k$ and then the chord preceding the first chord of $D$ is the last chord of $D_{j+1}$. In any case, the chord preceding the first chord of $D$ is terminal, so its position should be of the form $t_q$. Thus, if $t'_1,\dots,t'_k$ denote the positions of the terminal chords of $D$, then  $t_{q+r} = t'_r + t_q$, for $r \in \{1,\dots,k\}$. We have then
\[\prod_{j=q+1}^{q+k} a_{d(t_{j+1}),t_{j+1}-t_j}  = a_{d(t_{q+1}),t_{q+1}-t_q} \  \alpha_{peel}(D) = a_{d(b(D)),b(D)} \  \alpha_{peel}(D).\]
Furthermore, $C_1$ differs from $D$ just by a root chord insertion, hence we have $\alpha_{peel}(D)=\alpha_{peel}(C_1)$ so that 
\[\prod_{j=q+1}^{q+k} a_{d(t_{j+1}),t_{j+1}-t_j}  =  a_{d(b(C_1)),b(C_1)-1} \  \alpha_{peel}(C_1).\]

Compare now the peelings of $C$ and $C_2$. We can process them in parallel, except that at some point in the peeling of $C$, we have to treat the subdiagram $D$. After finishing the peeling of $D$, we can resume the peeling of $C$ and $C_2$ in parallel. Thus, since the chord visited just before $D$ has label $t_q$, and $D$ has $k$ terminal chords, the set of gaps between two terminal chords of $C_2$ is constituted by $t_2 - t_{1}, t_3 - t_2, \dots, t_q - t_{q-1}$ (occurring in $C$ before visiting $D$), then $t_{q+k+1}-|D|-t_q$ (in $C_2$ we do not visit $D$, so we have to subtract $|D|$ from the labels $\geq t_q + |C_2|$ of $C$  to recover the labels of $C_2$), and finally $t_{q+k+2} - t_{q+k+1},\dots,t_\ell-t_{\ell-1}$ (occurring in $C$ after $D$). Note that $t_{q+k}=t'_k + t_q$, which is also equal to $|D|+t_q$ since the last chord is always terminal. Therefore we have
\[\alpha_{peel}(C_2) = \prod_{j=1}^q a_{d(t_{j+1}),t_{j+1}-t_j} \times \prod_{j=q+k+1}^\ell a_{d(t_{j+1}),t_{j+1}-t_j}\]
so that
\[\alpha_{peel}(C) = a_{d(b(C_1)),b(C_1) - 1} \  \alpha_{peel}(C_1) \  \alpha_{peel}(C_2).\]
We then conclude that $\alpha_{inter}(C) = \alpha_{peel}(C)$ by the induction hypothesis.
\end{proof}

\subsection{Restating the quantum field theory formulas in terms of maps}\label{subsec new dfs}

Now let us think about how all the previous work clarifies the situation when the diagrams are transformed into combinatorial maps under $\theta$. 


The key is that here the orientation of the map given by the \textit{rightmost DFS} (Depth First Search) of the map.  The spanning tree is the same as in the Bridge First labeling but the labeling is quite different.

\begin{definition}[Rightmost DFS]
The principle of the rightmost DFS is the following. Starting from the root, we explore the map as far as possible by choosing at each newly visited vertex the nearest half-edge in clockwise order. If the other associated half-edge belongs to an already visited vertex, we backtrack. We stop once every edge has been visited. 
\label{def:rdfs}
\end{definition}

This map traversal naturally induces an orientation of the edges of the map, as illustrated by Figure~\ref{fig:rdfs}. 

\begin{figure}[!ht]
\centering
\includegraphics[scale=1.9]{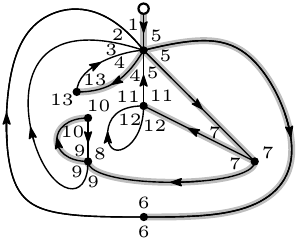}
\caption{The rightmost DFS of a map and its associated statistics.}
\label{fig:rdfs}
\end{figure}

We now give an equivalent of the $\omega$-index for maps $M$.  The principle is illustrated by Figure~\ref{fig:dfslabel}.

\begin{definition}[DFS-labeling of a map] We are going to label the corners of a map $M$ with integers $1,\dots,|M|$, using the orientation induced by the rightmost DFS.
 We start with the corner following the root, whose label is $1$. Suppose that the current corner is labeled by $i$, and the next corner around the vertex in the counterclockwise order is not labeled. If the edge separating these two corners is ingoing, then we label the second corner by $i+1$; otherwise, the edge is outgoing, and we label the corner by $i$. Once all corners around the current vertex have been labeled, we go to the vertex which has been visited next during the rightmost DFS. Around this vertex, there is only one ingoing edge coming from the spanning tree induced by the rightmost DFS --- it is the first edge that enabled the visit of this vertex. We then label the corner following this edge by the next available label, and continue the procedure. We stop when every corner is labeled. 
 \end{definition}
 
 The reader can refer again to Figure~\ref{fig:rdfs} for an example. 
 
Similarly to diagrams, we can define $\omega(k)$ for maps as the number of corners carrying the label $k$ (minus $1$). However it will be more convenient to define $\omega$ for edges. Thus, to each edge $e$, the integer $\omega(e)+1$ is the number of corners carrying the same label as the corner that is clockwisely adjacent to the ingoing part of $e$. Equivalently, $\omega(e)$ is the number of outgoing edges between the ingoing part of $e$ and the next ingoing half-edge after $e$ in the clockwise order. For example, the value of $\omega$ applied to the root of the map in Figure~\ref{fig:rdfs} is $2$, since there are three labels $5$.

 \begin{figure}[!ht]
\centering
\includegraphics[width=\textwidth]{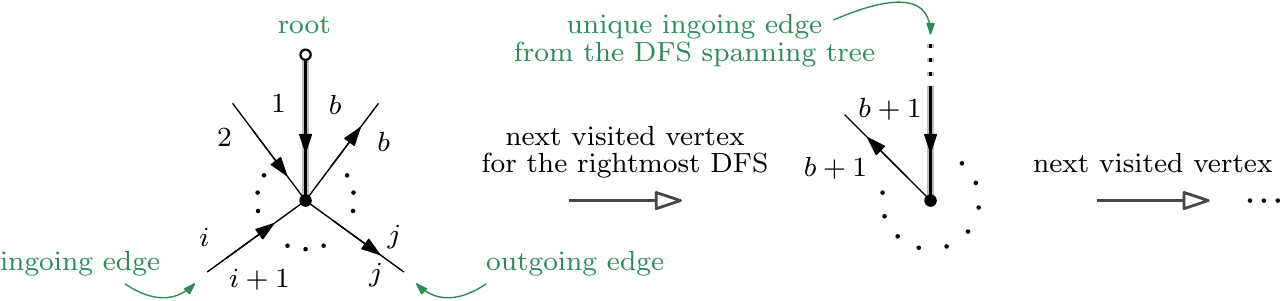}
\caption{DFS-labeling procedure}
\label{fig:dfslabel}
\end{figure}

We can now describe how the statistics from the QFT formulas translate to maps.

\begin{proposition} Under the bijection of Section~\ref{sec:bijection}, the parameters of \eqref{eq:solutionDS} are transferred as indicated by Table~\ref{tab:transfer}.
\label{prop:translation}
\end{proposition}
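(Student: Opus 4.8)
The plan is to prove the transfer of each parameter by induction on size, following the recursive definition of $\theta$ through the boxed product (Definitions~\ref{def:prod} and~\ref{def:prod2}). Writing a non-trivial connected diagram as $C = C_1 \star_i C_2$ and the corresponding bridgeless map as $M = M_1 \star_i M_2$ with $C_j = \theta(M_j)$, the identity $\theta(M_1 \star_i M_2) = \theta(M_1) \star_i \theta(M_2)$ reduces everything to understanding how a single boxed-product step acts on each statistic. On the diagram side this step unfolds as a $\Diagins$ insertion followed by a $\Rootdiag$ root-chord insertion, and on the map side as a $\Mapins$ grafting followed by a $\Rootmap$ root-edge insertion (in the degenerate case $C_1$ or $M_1$ trivial, only the root operation survives), so the work is to check that each of these elementary operations moves the diagram statistic and the map statistic in lockstep. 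The purely additive parameters are immediate: $|C|$ equals the number of edges because $\theta$ is size-preserving and matches chords with edges, and $\|C\|$ together with the individual weights $d(c)$ carry over once we fix, as part of the induction, the correspondence between chords and edges.

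Before the finer statistics can be compared I would first pin down the order in which chords and edges are enumerated. By Proposition~\ref{prop:peeling} we may use the peeling order on the diagram side, and I would show by induction that $\theta$ sends the peeling order on the chords of $C$ to the rightmost-DFS order on the edges of $M$ (Definition~\ref{def:rdfs}), i.e.\ the $k$th chord in peeling order corresponds to the $k$th edge visited by the search. This is compatible with the boxed product because both orders are generated root-first and then rightmost-component-first: peeling removes the root chord and processes $D_k, D_{k-1}, \dots$ from right to left, while the rightmost DFS explores the freshly attached component before backtracking. With the orders aligned, the $\omega$-index of the $k$th chord (the covering number from the interval-marking procedure) can be compared against the map $\omega$ of the $k$th edge (the clockwise count of outgoing edges from Figure~\ref{fig:dfslabel}).

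The heart of the argument is the correspondence between terminal chords and vertices, which I would establish together with the matching of the $\omega$-vectors, since the two interact. For the terminal--vertex count one shows that the set of terminal chords of $C_1 \star_i C_2$ is the disjoint union of those of $C_1$ and $C_2$: the root chord of a connected diagram of size $\ge 2$ is never terminal (connectivity forces it to cross some chord on its right), the operation $\Diagins$ nests $C_2$ inside an interval so that no new crossings are created between the two factors and terminal status is preserved on each side, and the final $\Rootdiag$ only adds a non-terminal chord. On the map side one shows dually that the vertices of $M_1 \star_i M_2$ are the disjoint union of those of $M_1$ and $M_2$: the operation $\Rootmap$ adds a non-bridge edge between two existing corners and hence creates no new vertex, while $\Mapins$ grafts $M_2$ onto a corner through a bridge, contributing exactly the vertices of $M_2$. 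Since the trivial map has one vertex and the one-chord diagram has one (terminal) chord, the induction closes and the number of terminal chords equals the number of vertices; refining this, terminal chords are matched with vertices and the gap statistics $t_j - t_{j-1}$ and the index $b(C)$ of the first terminal chord translate into the corresponding vertex data read off from the DFS.

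The main obstacle I anticipate is the simultaneous bookkeeping of the covering numbers through the boxed-product surgery. One must verify that the clockwise sweep defining $\omega(e)$ on the map --- reorganized by the half-edge cutting inherent in $\Rootmap$ and by the bridge grafting of $\Mapins$ --- reproduces exactly the interval-marking that defines $\omega$ on the diagram after a $\Rootdiag$ and a $\Diagins$. This is the same kind of delicate marking analysis that drove the proof of Lemma~\ref{lem match nu}, and the commutation relations of Lemmas~\ref{lem:commut} and~\ref{lem:commut2} are what guarantee that performing the root operation after the insertion (as in the boxed product) shifts all the labels consistently on both sides. Once this step is checked, every entry of Table~\ref{tab:transfer} follows, and consequently the quantum field theory solution \eqref{eq:solutionDS} and the identity \eqref{eq crazy formula} can be reread entirely in the language of rooted maps.
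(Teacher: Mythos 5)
Your proposal is sound in outline but scaffolds the induction differently from the paper. The paper's proof inducts on \emph{all} maps, not only bridgeless ones: it uses the fact that $\theta$ extends to $\phi$ (Theorem~\ref{theo:restriction}) and confronts the three cases of Definition~\ref{defn:phi} (trivial map; $M = \MapIns{M_2}{1}(M_1)$; $M = \RootEdge{i}(M')$) with their diagram images. This matches the recursive structure of the peeling order exactly --- removing the root chord of a connected diagram produces indecomposable, generally \emph{disconnected}, components --- so every entry of Table~\ref{tab:transfer} recurses locally through a single $\Mapins$ or $\Rootmap$ step. You instead induct along the boxed product within the bridgeless/connected classes. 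Since $M_1 \star_i M_2$ unfolds as $\RootEdge{i+\ell}\bigl(\MapIns{M_2}{\ell}(\widehat{M_1})\bigr)$, you end up analyzing the same two elementary operations, but you gain little from staying ``bridgeless'': your intermediate objects $\widehat{C_1}$ and $\DiagIns{C_2}{\ell}(\widehat{C_1})$ are decomposable anyway, and aligning the peeling order with the rightmost-DFS visit order across a $\star_i$ step requires an interleaving analysis of the kind the paper performs for the $\varbox$ product in the proof of Proposition~\ref{prop:peeling} --- the detour through $\phi$ exists precisely so that this alignment becomes a case-by-case local check. Both you and the paper leave the finest entries (the $\omega$-lockstep and the gaps $t_j - t_{j-1}$ versus in-degrees of the $j$th visited vertex) at the level of asserted ``tedious checking,'' so your rigor is comparable to the published sketch.

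One intermediate claim is false as stated: the terminal chords of $C_1 \star_i C_2$ are \emph{not} the disjoint union of those of $C_1$ and $C_2$ when $C_1$ is the one-chord diagram. In that case $C_1 \star_i C_2 = \RootChord{i}(C_2)$, whose new root chord is crossed (by connectivity) and hence non-terminal, so the terminal set is exactly that of $C_2$, and the lone terminal chord of $C_1$ is lost; symmetrically, $\RootEdge{i}(M_2)$ has exactly the vertices of $M_2$, no new vertex being created. Both sides drop one unit in the same degenerate case, so your induction still closes, and in the non-degenerate case your identity does hold because $\mathrm{terminal}(C_1) = \mathrm{terminal}(\widehat{C_1})$ (root-chord insertion never destroys terminality, since the root's left endpoint is leftmost). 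You should restate the lemma with the degenerate case carved out rather than assert the disjoint union unconditionally.
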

\begin{table}
\centering
\begin{tabular}{c|c} \hline
 \begin{minipage}{0.45\textwidth} \centering \vspace*{2pt} Parameters in connected chord diagrams
 \vspace*{2pt} \end{minipage} & Parameters in bridgeless maps \\ \hline\hline
chords & edges \\ \hline
terminal chords & \begin{minipage}{0.45\textwidth} \centering \vspace*{2pt}\vspace*{2pt} vertices; edges in the spanning tree induced by the rightmost DFS
 \vspace*{2pt}
 \end{minipage}  \\ \hline
 \begin{minipage}{0.45\textwidth} \centering \vspace*{2pt}  position $b(C)$  of the first terminal chord \vspace*{2pt}
  \end{minipage}
   & \begin{minipage}{0.45\textwidth} \centering \vspace*{2pt} number of ingoing edges (for the rightmost DFS)  incident to the root vertex 
 \vspace*{2pt}
 \end{minipage}
 \\ \hline 
 \begin{minipage}{0.45\textwidth} \centering \vspace*{2pt} gap $t_j-t_{j-1}$ between the $(j-1)$th and the $j$th terminal chords \vspace*{2pt}
  \end{minipage}
  &
 \begin{minipage}{0.45\textwidth} \centering \vspace*{2pt}
 number of ingoing edges (for the rightmost DFS) incident to the vertex which has been visited at position $j$ in the rightmost DFS  \vspace*{2pt}
 \end{minipage} \\ \hline
 \begin{minipage}{0.45\textwidth} \centering \vspace*{2pt}  $\omega$-index of the $k$th chord  \\  for the peeling order
 \vspace*{2pt}
  \end{minipage}
& \begin{minipage}{0.45\textwidth} \centering \vspace*{2pt} number of corners labeled by $k$ \\  for the DFS-labeling procedure \\ minus 1
 \vspace*{2pt}
 \end{minipage}
 \\ \hline 

\end{tabular}
\begin{caption}{
How parameters intervening in the QFT formulas transfer from diagrams to maps.}
\label{tab:transfer}
\end{caption}
\end{table}
This proposition can be in particular verified by comparing Figures~\ref{fig:peeling} and~\ref{fig:rdfs}, whose map and diagram are in bijection through $\phi$.

The most striking correspondence is the one between the terminal chords and the vertices of a map. First of all, it implies that the original QFT formulas can be expressed in terms of bridgeless maps counted with respect to edges and vertices, which are admittedly more natural than connected diagrams and terminal chords.
It also again emphasizes that the gaps not the terminal chords themselves are the right parameter.  Moreover, all the asymptotic results of \cite{CYchord} translate over to asymptotics about vertices of bridgeless maps.  For example, it proves that the number of vertices in a random bridgeless map asymptotically obeys a Gaussian law of mean $\sim \ln n$. 

\begin{proof}[Proof of Proposition~\ref{prop:translation} (sketch)]
The proof is a simple induction on (not necessarily bridgeless) maps $M$. It uses the fact that $\theta$ can be extended to $\phi$ (see Theorem \ref{theo:restriction}). Indeed, it is sufficient to consider $M$ under all possible forms (map reduced to one edge; $M=\MapIns {M_2} 1 (M_1)$; $M = \RootEdge i (M')$) and confront it to its image under $\phi$ (respectively the diagram reduced to one chord; $\phi(M)=\MapIns {\phi(M_2)} 1 (\phi(M_1))$; $\phi(M) = \RootEdge i (\phi(M'))$).

The proof is not difficult, but it requires a tedious checking through all para\-meters. All the necessary ideas are depicted in Figure~\ref{fig:translation}. \end{proof}

 \begin{figure}[!ht]
\centering
\includegraphics[width=\textwidth]{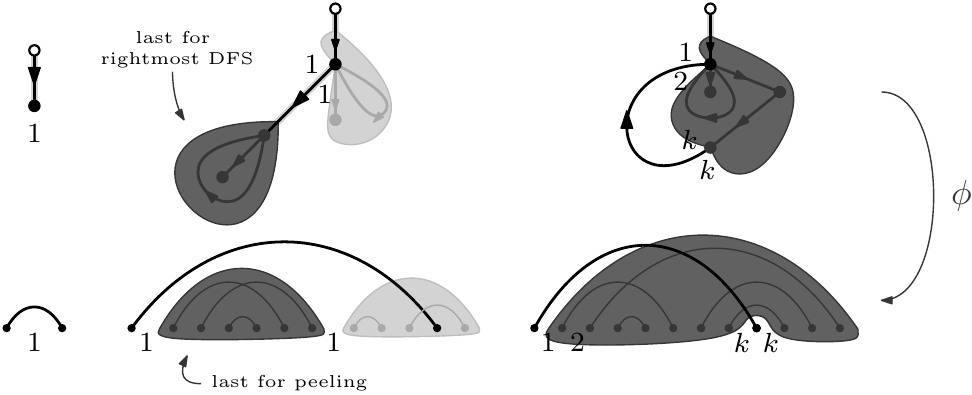}
\caption{How the statistics evolve from maps to indecomposable diagrams}
\label{fig:translation}
\end{figure}

Thanks to Propositions~\ref{prop:peeling} and~\ref{prop:translation}, we can rewrite the formulas we described in Subsection~\ref{ss:qftcontext} in terms of maps, offering a new viewpoint on these equations. In particular, Equation~\eqref{eq:solutionDS} can be written under the following form.

\begin{corollary}
Let $F_k(\rho)$ be of the form $\sum_{i\geq 0} a_{k, i} \rho^{i-1}$, and $s$ be a positive integer parameter. The Dyson-Schwinger equation
%
\begin{equation*}
G(x, L) = 1 - \sum_{k \geq 1}x^kG(x, \partial_{-\rho})^{1-sk}(e^{-L\rho}-1)F_k(\rho)_{\rho=0}
\end{equation*}
has for solution 
\[G(x,L) = 1 - \sum_M \left( \sum_{i =  1}^{\rid(M)}  a_{d(root(M)), \rid(M)-i} \frac{(-L)^i}{i!} \right) w(M)
A(M) x^{\|M\|},\]
where the sum runs over all bridgeless maps $M$, carrying a positive integer weight $d(e)$ on every edge $e$.  As for the other parameters, $\rid(M)$ is the number of ingoing edges induced by the rightmost DFS (see Definition~\ref{def:rdfs}); $\|M\|$ is the sum of the edge weights;
\begin{equation}
w(M) = \prod_{e\textrm{ edge }\in M}\binom{d(e)s + \omega(e) - 2}{\omega(e)};
\label{eq:wmap}
\end{equation}
$\omega(e)$ is the number of outgoing edges between the ingoing part of $e$ and the next ingoing half-edge after $e$ in the clockwise order;
\begin{equation} A(M) = \prod_{\substack{e \text{ not in the}\\ \text{DFS spanning tree}}}a_{d(e), 0}\prod_{\substack{e \neq \text{root and in the}\\ \text{DFS spanning tree}}} a_{d(e), \id(\ve(e))};
\label{eq:amap}
\end{equation}
and $\id(\ve(e))$ is the number of ingoing edges around the vertex pointed by the edge $e$.
\label{cor:allinmaps}
\end{corollary}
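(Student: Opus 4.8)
The plan is to obtain this corollary by transporting the already-established solution~\eqref{eq:solutionDS} through the bijection $\theta$, using the parameter dictionary we have assembled. The starting point is the fact (the main result of \cite{HYchord}, taken as given) that~\eqref{eq:solutionDS} solves~\eqref{eq gen case}, together with Proposition~\ref{prop:peeling}, which guarantees that the same formula continues to solve the Dyson--Schwinger equation after every occurrence of $\nu$ is replaced by the covering number $\omega$ computed in the \emph{peeling order}. Thus it suffices to reindex the resulting sum over connected diagrams as a sum over bridgeless maps via $\theta$, and to check that each factor of the summand is carried to the corresponding factor in the map formula.

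The reindexing is immediate since $\theta$ is a size-preserving bijection between bridgeless maps and connected diagrams (Definition~\ref{def:theta}): I would write $C = \theta(M)$ and push the chord weights $d$ forward to edge weights along the chords-to-edges correspondence of Table~\ref{tab:transfer}, which already identifies $|C|$ with $|M|$ and $\|C\|$ with $\|M\|$, so that the powers of $x$ agree. The bulk of the argument is then the term-by-term verification supplied by Proposition~\ref{prop:translation}. The covering numbers $\omega$ of the chords in the peeling order coincide with the quantities $\omega(e)$ attached to edges by the DFS-labeling, giving $w(C) = w(M)$ once~\eqref{eq:defw} and~\eqref{eq:wmap} are matched binomial by binomial; and the position $b(C)$ of the first terminal chord is sent to $\rid(M)$ while the weight $d(b(C))$ is sent to $d(root(M))$, so the inner $L$-sum $\sum_{i=1}^{b(C)} a_{d(b(C)),\,b(C)-i}\tfrac{(-L)^i}{i!}$ is transported exactly onto its map analogue.

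For the factor $A(C)$ of~\eqref{eq:defA} I would use the two-part translation of terminal chords from Table~\ref{tab:transfer}: a chord is non-terminal precisely when the corresponding edge lies outside the DFS spanning tree, so $\prod_{c\text{ not terminal}} a_{d(c),0}$ becomes the first product of~\eqref{eq:amap}; and the terminal chords other than the first correspond to the non-root edges of the DFS spanning tree, with each gap $t_j-t_{j-1}$ identified with $\id(\ve(e))$, the number of ingoing edges at the vertex that $e$ points to. Hence $\prod_{j=2}^{\ell} a_{d(t_j),\,t_j-t_{j-1}}$ becomes the second product of~\eqref{eq:amap}. Assembling these identifications shows that the summand for $C=\theta(M)$ equals the summand for $M$, so the two generating functions coincide and the map formula solves the same Dyson--Schwinger equation.

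I do not expect a genuine obstacle, since the two substantive inputs --- Proposition~\ref{prop:peeling} (that $\omega$ in the peeling order may replace $\nu$) and Proposition~\ref{prop:translation} (the full parameter dictionary of Table~\ref{tab:transfer}) --- are already established. The one point requiring care is bookkeeping the special status of the first terminal chord: it is deliberately excluded from both products defining $A$ and instead governs the inner $L$-sum, so in matching~\eqref{eq:defA} to~\eqref{eq:amap} I must verify that the root edge, as the image of the first terminal chord, is exactly the edge excluded from the spanning-tree product, mirroring the exclusion of $t_1=b(C)$ from $\prod_{j=2}^{\ell}$.
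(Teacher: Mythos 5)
Your proposal is correct and is essentially the paper's own derivation: the corollary is obtained there exactly as you describe, by taking the solution~\eqref{eq:solutionDS} of~\eqref{eq gen case}, invoking Proposition~\ref{prop:peeling} to replace $\nu$ by the peeling-order $\omega$, and then transporting every parameter through $\theta$ via Proposition~\ref{prop:translation} and Table~\ref{tab:transfer}. Your careful bookkeeping of the first terminal chord --- checking that it maps to the root, which is precisely the spanning-tree edge excluded from the second product in~\eqref{eq:amap}, mirroring the exclusion of $t_1=b(C)$ in~\eqref{eq:defA} --- is exactly the one subtle point in the translation, and you handle it as the paper implicitly does.
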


\subsection{A new combinatorial interpretation of a quantum field theoretic formula}
\label{ss:interpretation}

As an application of the map interpretation of the solution of the previous Dyson-Schwinger equations, we are going to describe an interpretation of Equation~\eqref{eq crazy formula} at the map level.  Then with Corollary~\ref{cor:allinmaps} all steps and tools can be understood on the same objects namely combinatorial maps.  Recall that this equation was in the core of the proof of the papers \cite{MYchord,HYchord} but the proof passed to rooted trees in an obscure way and was never understood at the level of chord diagrams.
It can be reformulated in terms of maps as follows.

\begin{proposition} Let $G_d(x,c)$ and $\widehat G_d(x,c)$ be the weighted generating functions 
\[G_d(x,c) = \sum_{\substack{M\text{ bridgeless map}\\\text{with a weight }>0\\\text{on each edge}\\ \text{with }\rid(M)=d}} w(M)\,A(M)\,x^{\|M\|}\,c^{\omega(root(M))} ,\]
\[\widehat G_d(x,c) = \sum_{\substack{M\text{ bridgeless map}\\\text{with a weight }>0\\\text{on each edge}\\ \text{with }\rid(M)=d}} \hat w(M)\,A(M)\,x^{\|M\|}\,c^{\omega(root(M))}, \]
where $\rid(M)$ is the number of ingoing edges induced by the rightmost DFS incident to the root vertex, $\|M\|$ is the sum of the edge weights,  $\omega(root(M))$ is the number of outgoing edges between the root and the next ingoing edge for the clockwise order, $w(M)$ and $A(M)$ are respectively defined by~\eqref{eq:wmap} and~\eqref{eq:amap}, and
\begin{equation*}
\hat w(M) = \prod_{\substack{e\textrm{ edge }\in M\\\text{\textbf{different from the root}}}}\binom{d(e)s + \omega(e) - 2}{\omega(e)}.
\end{equation*}
Then for $d \geq 2$,
\begin{equation}
\widehat G_{d}(x,c)= c \, \sum_{\substack{d_1\geq 1, i \geq 1\\d_1 + i = d}}\sum_{d_2 \geq i} \binom{d_1 + i - 1} i \widehat  G_{d_1}(x,c) \,
dec_{d_2,i}(x) \, G_{d_2}(x,1), 
\label{eq:newcrazyformula}
\end{equation}
where $dec_{d_2,i}(x) = \sum_{k \geq 1} a_{k,d_2-i} \,(1-x)\, x^{k-1}$.
\label{prop:translatedcrazyformula}
\end{proposition}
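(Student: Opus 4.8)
The plan is to obtain \eqref{eq:newcrazyformula} by transporting the already-established chord-diagram identity \eqref{eq crazy formula} across the dictionary of Table~\ref{tab:transfer}. By Proposition~\ref{prop:peeling}, Equation~\eqref{eq crazy formula} stays valid after replacing $\nu$ by the covering number $\omega$ computed for the peeling order, and by Proposition~\ref{prop:translation} these peeling-order statistics are precisely the rightmost-DFS statistics of the corresponding bridgeless map under $\theta$. Hence the series $G_d(x,c)$ and $\widehat G_d(x,c)$ of the statement equal the analogous generating functions over connected diagrams $C$ with $b(C)=d$, weighted by $w(C)A(C)x^{\|C\|}c^{\omega(b(C))}$ (respectively with $\hat w(C)$ in place of $w(C)$), where the marked variable $c$ records the covering number of the first terminal chord, i.e.\ exactly the chord whose factor is omitted in passing from $w$ to $\hat w$. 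So it suffices to check that \eqref{eq:newcrazyformula} is the generating-function repackaging of \eqref{eq crazy formula}.

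First I would fix $b(C)=d$ and read the left-hand side of \eqref{eq crazy formula}, summed against $x^{\|C\|}c^{\,\omega(b(C))}$, as $\widehat G_d(x,c)$. Substituting the right-hand side of \eqref{eq crazy formula} and regrouping the two bracketed sums into generating functions, the factor carrying $\hat w$ and $\nu(b(D_2))=n-1$ assembles into $c\,\widehat G_{d_1}(x,c)$ with $d_1=d-i$; the single extra power of $c$ records the increment $\omega(b(C))=\omega(b(D_2))+1$ performed along the leftward path at the first terminal chord by the variant product of Definition~\ref{def var box}. The binomial $\binom{d-1}{\ell}=\binom{d_1+i-1}{i}$ matches after identifying the summation indices, and on the map side it is exactly the shuffle of the edges around the root vertex pictured in Figure~\ref{fig:principle}. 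It then remains to identify the factor carrying $w$, namely
\[
\sum_{d_2\ge i}\ \sum_{b(D_1)=d_2} w(D_1)\,a_{d(b(D_1)),\,d_2-i}\,A(D_1)\,x^{\|D_1\|},
\]
with $\sum_{d_2\ge i} dec_{d_2,i}(x)\,G_{d_2}(x,1)$.

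The heart of the argument is this last identification. The two expressions differ only by the factor $a_{d(b(D_1)),\,d_2-i}$, the contribution acquired by the first terminal chord of $D_1$ when the variant product demotes it to a later terminal chord of the assembled diagram. I would establish it by summing over the positive integer weight $\kappa=d(b(D_1))$ of that chord: the geometric series $\sum_{\kappa\ge1}x^{\kappa}=x/(1-x)$ is precisely what produces the prefactor $(1-x)$ in $dec_{d_2,i}(x)=\sum_{\kappa\ge1}a_{\kappa,d_2-i}(1-x)x^{\kappa-1}$, while the factors of $A(D_1)$ and of the remaining chords of $w(D_1)$ are carried unchanged into $G_{d_2}(x,1)$.

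The step I expect to be the main obstacle is controlling how $\kappa$ interacts with $w$: the weight of the demoted chord appears not only in $x^{\|D_1\|}$ and in $a_{\kappa,d_2-i}$ but also inside the factor $\binom{\kappa s+\omega(b(D_1))-2}{\omega(b(D_1))}$ of $w$ (see \eqref{eq:defw}), so the clean disentanglement producing $(1-x)$ is transparent only when that covering number is $0$. I would therefore track the covering number of the connecting chord through the variant product, reusing the local analysis already carried out in the proof of Lemma~\ref{lem match nu} (where the covering numbers of the inserted component are preserved and its root reset to $0$), and, where needed, invoking the equidistribution of Lemma~\ref{lem match nu} together with Proposition~\ref{prop nu omega equivalence} to reorganize the summation so that the connecting edge effectively contributes covering number $0$. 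Once this bookkeeping is settled, collecting the three regrouped factors reproduces \eqref{eq:newcrazyformula}; as a sanity check I would verify the case $d=2$ directly on maps by reading off the edge-shuffle picture of Figure~\ref{fig:principle}.
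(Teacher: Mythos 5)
Your overall strategy -- transporting \eqref{eq crazy formula} across the diagram/map dictionary of Propositions~\ref{prop:peeling} and~\ref{prop:translation} and then repackaging it as generating functions -- is genuinely different from what the paper does, and it stalls exactly at the step you yourself flag. The paper does \emph{not} derive Proposition~\ref{prop:translatedcrazyformula} from \eqref{eq crazy formula} at all: it gives a self-contained bijective construction on maps (split the root vertex of $M_2$ into $d_2$ pieces each carrying one ingoing edge, glue the first $i$ of them into the root vertex of $M_1$ at the $d_1$ corners following ingoing edges, with multiplicity, which is where $\binom{d_1+i-1}{i}=\multiset{d_1}{i}$ acquires its meaning; reattach the old root of $M_2$ next to the root of $M_1$; invert by detaching edges while running a rightmost DFS). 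Within that construction the series $dec_{d_2,i}(x)$ is produced by stripping and re-adding the decoration of the root of $M_2$. In your transport route the corresponding step is the identification
\begin{equation*}
\sum_{\substack{D_1\\ b(D_1)=d_2}} w(D_1)\,a_{d(b(D_1)),\,d_2-i}\,A(D_1)\,x^{\|D_1\|} \;=\; dec_{d_2,i}(x)\,G_{d_2}(x,1),
\end{equation*}
and here your proposal has a genuine gap. You correctly observe that the weight $\kappa=d(b(D_1))$ of the distinguished chord appears both in $a_{\kappa,d_2-i}\,x^{\kappa}$ and inside the binomial factor $\binom{\kappa s+\omega(b(D_1))-2}{\omega(b(D_1))}$ of $w$ (cf.~\eqref{eq:wmap}), so the clean geometric-series disentanglement producing $(1-x)$ works only when that covering number vanishes. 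But your proposed repair does not close this: Lemma~\ref{lem match nu} and Proposition~\ref{prop nu omega equivalence} are \emph{unweighted, global} equidistribution statements (they match whole $\omega$-vectors jointly with the set of terminal chords across two families of diagrams); they cannot decouple the edge weight $d(\cdot)$, which is an independent decoration attached to a \emph{specific} chord, from that chord's covering number inside a class with $b(D_1)=d_2$ fixed. ``Reorganizing the summation so that the connecting edge effectively contributes covering number $0$'' is precisely the assertion that needs a proof, and nothing in the cited lemmas supplies it.

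There is a second, structural obstruction to the pure-transport plan that you should be aware of. Equation~\eqref{eq crazy formula} arises from the variant boxed product $\varbox$ (the root-share/tree decomposition), whereas the dictionary $\theta$ and Propositions~\ref{prop:peeling}--\ref{prop:translation} are statements about \emph{whole objects}: they match statistics of a map with statistics of its image diagram, but they do not assert that the $\varbox$-decomposition of a diagram corresponds to any decomposition of the image map (the paper explicitly notes it has no interpretation of that decomposition even at the level of chord diagrams, and the shuffle binomial $\binom{j}{\ell}$ had no diagram-level meaning). So ``regrouping the two bracketed sums into generating functions'' silently assumes a component-level compatibility that is not available; the whole point of the paper's proof is to replace that missing compatibility by a \emph{new} bijection directly on maps, whose inverse (the rightmost-DFS detaching procedure) is what guarantees that every map with $\rid(M)=d$ is obtained exactly once. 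In short: your route is a legitimately different idea, and your instinct about where the difficulty sits is exactly right, but as written the proposal proves neither the $dec_{d_2,i}(x)\,G_{d_2}(x,1)$ factorization nor the bijectivity that the paper's construction delivers, so the argument is incomplete at its central step.
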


\begin{proof}\textbf{1. Principle.}
This proof is rather complex and will be divided in several parts. The idea is to interpret the right side of Equation~\eqref{eq:newcrazyformula} as the combination of two bridgeless maps that we shuffle at the level of their root vertices.

More precisely, we are going to consider two bridgeless maps $M_1$ and $M_2$, where the numbers of ingoing edges (for the rightmost DFS) incident to the root vertex are respectively $d_1$ and $d_2$ and we fix any $i \in \{1,\dots,d_2\}$. Roughly speaking, we are going to split the root vertex of $M_2$ in $d_2$ pieces containing each one of them an ingoing edge, then select the first $i$ such pieces and glue them on the root vertex on $M_1$. Meanwhile, the root of $M_2$ will be inserted at the corner just to the right of the root. This principle is illustrated by Figure~\ref{fig:principle}.

 \begin{figure}[!ht]
\centering
\includegraphics[width=\textwidth]{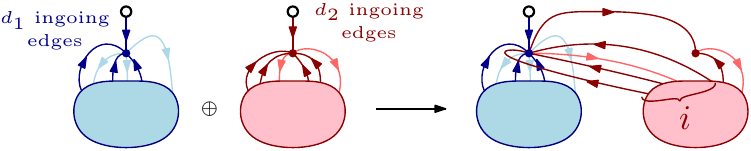}
\caption{Interpretation of Equation~\eqref{eq:newcrazyformula} as the combination of two bridgeless maps.}
\label{fig:principle}
\end{figure}

The series $dec_{d_2,i}(x) = \sum_{k \geq 1} a_{k,d_2-i} \,(1-x)\, x^{k-1}$ is introduced to deal with the fact that the root of $M_2$ is no longer the root after the operation, and so $A(M_2)$ has been modified.

\textbf{2. Splitting the root of $\boldsymbol{M_2}$.} 
The half-edges incident to the root of $M_2$ can be listed in the counterclockwise order as
\[i_1,(o_{2,1},\dots,o_{2,j_2}),i_2,(o_{3,1},\dots,o_{3,j_3}),i_3,\dots,(o_{d_2,1},\dots, o_{d_2,j_{d_2}}),i_{d_2}=root(M_2),\]
where $i_1,\dots,i_{d_2}$ are the $d_2$ ingoing edges incident to the root vertex, $i_{d_2}$ is the root of $M_2$, and $(o_{k,1},\dots,o_{k,j_k})$ is the sequence (potentially empty) of outgoing edges preceding $i_k$. Note that $j_k=\omega(i_k)$ for every $k \in \{1,\dots,d_2\}$. 

We split the root vertex of $M_2$ into $d_2$ smaller vertices $v_1,\dots,v_k$ such that the incident half-edges of $v_k$ are $o_{k,1},\dots,o_{k,j_k},i_k$. Let us denote the resulting map $\widehat M_2$. Remark that $\widehat M_2$ is still connected since we can still carry out a DFS with the same orientation (maybe not in the same order, but if we need to backtrack to the root vertex to follow an outgoing edge, this edge is necessarily attached to an ingoing edge which has been previously visited). The process is shown in Figure~\ref{fig:splitM2}.

 \begin{figure}[!ht]
\centering
\includegraphics[scale=2]{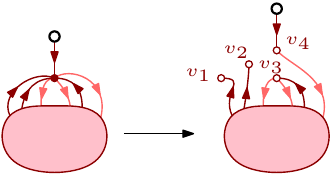} \hfill\includegraphics[scale=2]{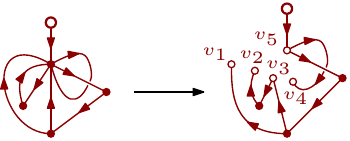}
\caption{Typical splitting of $M_2$, along with an example.}
\label{fig:splitM2}
\end{figure}

\textbf{3. Defining the map $\boldsymbol M$.}
We are going to merge the vertices $v_1,\dots,v_i$ of $\widehat M_2$ with the root vertex of $M_1$ at some particular locations. These locations are just inside the corners that counterclockwisely follow an ingoing edge. (Thus there are $d_1$ such corners.) Figure~\ref{fig:locations} illustrates that.

 \begin{figure}[!ht]
\centering
\includegraphics[scale=2]{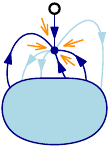} \hspace{0.25\textwidth} \includegraphics[scale=2]{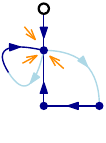}
\caption{Corners after the ingoing edges, along with an example.}
\label{fig:locations}
\end{figure}

We fix now a subset $S$ of these locations, multiplicity allowed, of size $i$. (Since we authorize multiple occurrences of the same location, there are $\multiset {d_1} {i} = \binom{d_1 + i - 1} i$ such subsets.) Then we glue $v_1$ at the first\footnote{\textit{First} means here \textit{first in the counterclockwise order, if we start from the root}.} corner given by $S$, putting $i_1$ in last. We similarly glue $v_2$ in the second position, then $v_3$, and so on and so forth, finishing by $v_i$. We glue back $v_{i+1},\dots,v_{d_2}$ as they were before in $M_2$.

Moreover, we attach the root of $\widehat{M_2}$ as a non-root edge just in the corner following the root of $M_1$ in the clockwise order.

The resulting map is denoted $M$. Complete examples are given by Table~\ref{tab:combin}. Note that when $i = d_2$, the root of $M_2$ becomes a loop.

\begin{table}[!ht]
\begin{center}
\begin{tabular}{|c|c|c|c|} \hline
i &$M_1$ with $S$ & $M_2$ & Resulting $M$
\\
\hline   \vspace*{-.37cm} & & &\\
\hline
3 &
\begin{minipage}{0.15\textwidth}
\vspace*{.2cm}
\begin{center}
\includegraphics[scale=2]{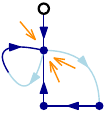} 
\end{center}
\vspace*{.01cm}
\end{minipage}
&
\begin{minipage}{0.15\textwidth}
\vspace*{.2cm}
\begin{center}
\includegraphics[scale=2]{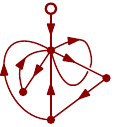} 
\end{center}
\vspace*{.01cm}
\end{minipage}
&
\begin{minipage}{0.45\textwidth}
\vspace*{.2cm}
\begin{center}
\includegraphics[scale=2]{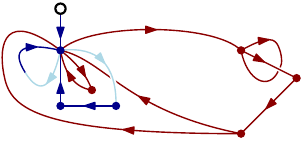} 
\end{center}
\vspace*{.01cm}
\end{minipage}
\\
\hline
5 &
\begin{minipage}{0.15\textwidth}
\vspace*{.2cm}
\begin{center}
\includegraphics[scale=2]{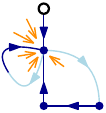} 
\end{center}
\vspace*{.01cm}
\end{minipage}
&
\begin{minipage}{0.15\textwidth}
\vspace*{.2cm}
\begin{center}
\includegraphics[scale=2]{exM2} 
\end{center}
\vspace*{.01cm}
\end{minipage}
&
\begin{minipage}{0.45\textwidth}
\vspace*{.2cm}
\begin{center}
\includegraphics[scale=2]{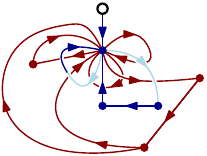} 
\end{center}
\vspace*{.01cm}
\end{minipage}
 \\  \hline
\end{tabular}
\end{center}
\caption{Examples of combinations between two bridgeless maps $M_1$ and $M_2$.}
\label{tab:combin}
\end{table}

\textbf{4. How the parameters evolve.} First of all, the weights on the edges do not change during the operation, so $\|M\| =\|M_1\| + \|M_2\|$. 

One outgoing edge was added to the right of the root of $M$ (which was the root of $M_2$), so the number of outgoing edges of $M$ between the root and the next ingoing edge in the clockwise order has been increased by $1$ compared to $M_1$. In other words, $\omega(root(M))= \omega(root(M_1)) + 1$. Additionally, since each vertex $v_1,\dots,v_i$ has one ingoing edge, we have $\rid(M)=\rid(M_1)+i=d_1 + i.$

Concerning $A(M)$, we remark that it compiles every factor of $A(M_1)$ and $A(M_2)$, and the factor associated to the root of $M_2$ (which is no longer a root in $M$). There are two possibilities here: either $i < d_2$, and in that case, the root of $M_2$ belongs to the DFS spanning tree of $M$, and because we have removed $i$ ingoing edges to the root vertex of $M_2$, this factor is $a_{d(root(M_2)),d_2 - i}$; or $i=d_2$, and the root vertex of $M_2$ is merged with the root vertex of $M_1$, implying that $root(M_2)$ is not in the spanning tree of $M$, hence the factor is $a_{d(root(M_2)),0}$. In every case, we have $A(M) = a_{d(root(M_2)),d_2 - i}A(M_1)A(M_2)$.

As for $\hat w(M)$, observe that $\omega(e)$ is invariant for every edge $e$ different from the root of $M$. We have for that purpose split the root of $M_2$ in pieces which preserve the number of outgoing edges before an ingoing edge. Consequently, $\hat w(M)= w(M_1) \hat w(M_2)$.

It is then relatively easy to see that the weighted generating function of maps $M$ (potentially with multiplicity) produced by the combinations of every pair of maps $M_1$ and $M_2$, with respectively $d_1$ and $d_2$ ingoing edges incident to the root vertex, is given by the right side of \eqref{eq:newcrazyformula}. The only subtlety here is the incorporation of $a_{d(root(M_2)),d_2 - i}$ which depends on the decoration of the root of $M_2$. To deal with this, we remark that the weighted generating function of maps $M_2$ where we have removed the weight of the root is given by $\dfrac{G_{d_2}}{\sum_{k \geq 1} x^k}= \dfrac{G_{d_2}}{\frac x {1-x}}$.
 Then, to recover the weight of the root of $M_2$ along with $a_{d(root(M_2)),d_2 - i}$, we have to multiply the previous series by $\sum_{k \geq 1} a_{k,d_2 - i}x^k$, which gives $dec_{d_2,i}(x)G_{d_2}$.
 
 Thus, to prove Equation~\eqref{eq:newcrazyformula}, it just remains to show that the construction is bijective, which is the purpose of the last point.
 
\textbf{5. Recovering $\boldsymbol {M_1}$, $\boldsymbol {M_2}$ and $\boldsymbol i$.} 
Given a map $M$, we are going to construct two maps $M_1$ and $M_2$ whose combination gives $M$. The process is illustrated by Figure~\ref{fig:unmerging}.

 \begin{figure}[!ht]
\centering
\includegraphics[width=\textwidth]{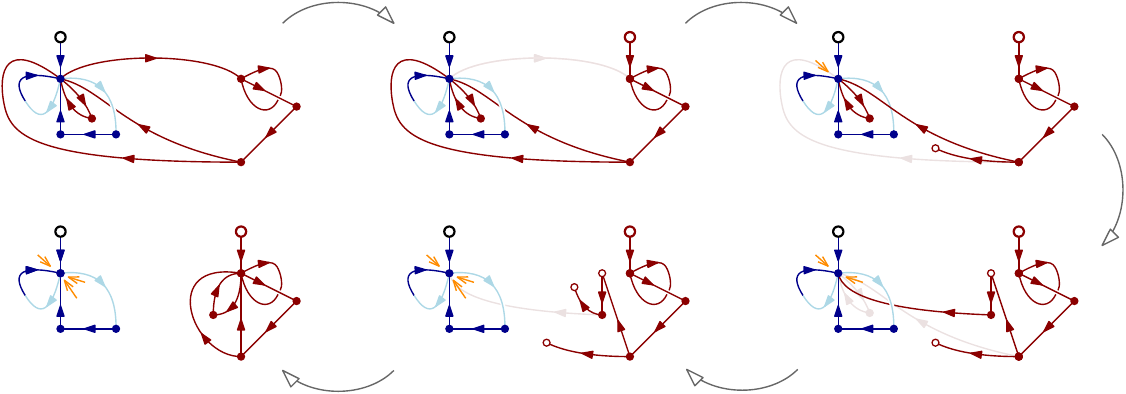}
\caption{How to recover $M_1$ and $M_2$.}
\label{fig:unmerging}
\end{figure}

We start by detaching the edge clockwisely following the root edge and making it a root. This will be the root of the map $M_2$. We are going now to successively detach edges which are incident to the root vertex of $M$ until we obtain two separate maps.

To do so, we run a rightmost DFS of the map that starts from the root of $M_2$. Whenever we return to the root vertex of $M$, we detach the corresponding ingoing edge along with the whole sequence of outgoing edges that clockwisely follow it. We repeat this until $M_2$ forms a new connected component. At this point, we glue every detached vertex to the root vertex of $M_2$, in the same order that these vertices were attached to the root vertex of $M$.

We thus prove that the combination procedure is bijective.
\end{proof}

Over all, the message here is that the map interpretation is helpful and more natural for the chord diagram expansions in quantum field theory of \cite{MYchord, HYchord, CYchord}.  Some of these improvements are manifestly simple such as the reinterpretation of terminal chords as vertices.  Others, such as the formula of this section, are considerably more intricate.  Keep in mind, however, that the original proof of this formula was also very intricate and went though subtle auxiliary objects, and part of the complexity exhibited here is in proving the connection between the two approaches as in Subsection~\ref{subsec chord orders}, rather than due to the new approach itself.

\section{New interpretation of Arquès and Béraud's functional equation}
\label{sec:arques-beraud}

\subsection{Statement of the equation and implications}
\label{subsec:arques-beraud-statement}

In \cite{ABmaps}, Arquès and Béraud studied the two-variable generating function
\begin{multline*}
B(z,u) = u + z(u + u^2) + z^2(3u + 5u^2 + 2u^3) + 
z^3(15u + 32u^2 + 22u^3 + 5u^4)+\dots
\end{multline*}
counting rooted maps with respect to edges\footnote{Note that our rooting convention for maps allocates one additional (dangling) edge relative to Arquès and Béraud's convention, explaining the seeming shift by a factor of $z$.} ($z$) and vertices ($u$), and proved that it satisfies the following simple functional equation:
\begin{align}
\label{eq:ABeq1}
B(z,u) &= u + zB(z,u)B(z,u+1)
\end{align}
Arquès and Béraud showed how to derive \eqref{eq:ABeq1} algebraically starting from another functional differential equation 
which they established through a root edge decomposition of maps on oriented surfaces of arbitrary genus (a refinement of the basic analysis we described in Section~\ref{ss:btwn-id-m}).
Later, Cori~\cite{Cori2009} gave an alternative proof of \eqref{eq:ABeq1} that made use of Ossona de Mendez and Rosenstiehl's bijection (henceforth, the ``OMR bijection'') between combinatorial maps and indecomposable involutions \cite{OdMRencoding},
which sends vertices of a map to \textit{left-to-right maxima} of the corresponding indecomposable involution.
Speaking in terms of chord diagrams, 
left-to-right maxima correspond to \textit{top chords}: that is, chords which are not below another chord.
For example, the number of top chords in the diagrams of size $3$ of Tables~\ref{tab:smallex} and \ref{tab:smallex2} are respectively 
3, 3, 2, and 2 for the connected diagrams, and 1, 1, 1, 2, 2, and 2 for the disconnected diagrams.

In the following section, we give a direct bijective interpretation of \eqref{eq:ABeq1} on indecomposable chord diagrams.
Besides its intrinsic interest, this bijection has the useful property that it restricts to connected diagrams to verify a modified functional equation:
\begin{equation}
\label{eq:connected-simple}
C(z,u) = u + zC(z,u)(C(z,u+1) - C(z,1))
\end{equation}
By Theorem~\ref{theo:simplebijection}, we know that $C(z,1)$ is also the generating function for bridgeless maps counted by number of edges, and we will use this fact later to derive an interesting application to the combinatorics of lambda calculus (Section~\ref{subsec:lambda}).
On the other hand, we do not see an obvious interpretation of the $u$ parameter of \eqref{eq:connected-simple} for bridgeless maps:
in particular, it is easy to check (by simple inspection of Table~\ref{tab:smallex}) that the coefficient of $z^n u^k$ in
\begin{equation*}
C(z,u) = u + zu^2 + z^2(2u^2 + 2u^3) + z^3(10u^2 + 12u^3 + 5u^4) + \dots
\end{equation*}
does \emph{not} give the number of bridgeless maps with $n$ edges and $k$ vertices.
This can also be seen as an explanation for why the OMR bijection \emph{cannot possibly restrict} to a bijection between bridgeless maps and connected diagrams.
Indeed, we have the following somewhat curious situation:
\begin{enumerate}
\item The OMR bijection sends vertices to top chords, but does not restrict to a bijection between bridgeless maps and connected diagrams.
\item The $\phi$ bijection of Section~\ref{sec:bijection} restricts to a bijection between bridgeless maps and connected diagrams, but sends vertices to terminal chords rather than to top chords (see Proposition~\ref{prop:translation}).
\end{enumerate}
Taking either the $\phi$ bijection or the OMR bijection as primary leads naturally to two different open questions:
\begin{question}Is there a natural invariant $Q$ of maps, such that the coefficient of $z^n u^k$ in \eqref{eq:connected-simple} counts bridgeless maps with $n$ edges and $Q = k$?
\label{question1}
\end{question}
\begin{question}
Is there a natural property $P$ of maps, such that the coefficient of $z^n u^k$ in \eqref{eq:connected-simple} counts $P$-maps with $n$ edges and $k$ vertices?
\label{question2}
\end{question}

Furthermore, we can state at this point another interesting phenomenon. Combining Observation~1 and~2 from above shows that the number of indecomposable diagrams with $n$ chords and $k$ terminal chords is equal to the number of indecomposable diagrams with $n$ chords and $k$ top chords. This remarkable property, which had not been observed before, is another nice consequence of the existence of the $\phi$ bijection. In actual fact, the statistics counting terminal chords and top chords are more than equidistributed for indecomposable diagrams; they are symmetric:

\begin{proposition}[\cite{Lpersonal}] \label{prop:topterm-symmetry} Indecomposable diagrams of size $n$ with $k_1$ terminal chords and $k_2$ top chords are in bijection with indecomposable diagrams of size $n$ with $k_2$ terminal chords and $k_1$ top chords.
\end{proposition}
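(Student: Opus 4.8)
The plan is to exhibit an explicit involution on indecomposable diagrams that exchanges the statistics ``number of terminal chords'' and ``number of top chords''; being an involution, it immediately furnishes the desired bijection. The key is to reformulate both statistics in terms of the classical opener/closer encoding of a matching, where a single charge-reversal operation does all the work.

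First I would record, for each chord diagram, its \emph{type} --- the word in $\{\mathsf{O},\mathsf{C}\}$ marking each of the $2n$ points as an opener or a closer --- together with, at each closer, the rank of the chord being closed among the currently open chords ordered by opening time. This is the standard encoding of matchings by Hermite histories: a matching is determined uniquely by its type (a ballot word) together with a choice, at each closer, of an integer $p\in\{0,\dots,h-1\}$, where $h$ is the number of chords open just before that closer. Writing $q=h-1-p$, the pair $(p,q)$ has a transparent meaning: when the chord $X=(a,b)$ closes at $b$, the other chords still open are exactly those $(c,d)$ with $c<b<d$, and $p$ counts those with $c<a$ (which cover $X$) while $q$ counts those with $a<c$ (which cross $X$ from the right).

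The central observation is that $X$ is terminal if and only if $q=0$ (no chord crosses it from the right), and $X$ is a top chord if and only if $p=0$ (no chord covers it). Hence the number of terminal chords of a diagram equals the number of closers with $q=0$, and the number of top chords equals the number of closers with $p=0$. I would then define the involution $\Phi$ which keeps the type fixed and replaces each charge $p$ by $q=h-1-p$ at every closer. Since $h$ is determined by the type alone, $\Phi$ is a well-defined involution on matchings that swaps the roles of $p$ and $q$ at each closer; therefore the number of terminal chords of $\Phi(D)$ equals the number of top chords of $D$, and vice versa, so $\Phi$ sends a diagram with $k_1$ terminal and $k_2$ top chords to one with $k_2$ terminal and $k_1$ top chords.

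It remains to check that $\Phi$ preserves indecomposability, which is the one structural point needing care. It is in fact immediate from the encoding: a diagram is indecomposable precisely when the height profile of its type (the number of open chords) is strictly positive at every interior point, returning to $0$ only at the very end, since a return to $0$ after some proper prefix is exactly a splitting into a concatenation. As $\Phi$ leaves the type --- and hence the height profile --- unchanged, it restricts to an involution on indecomposable diagrams, completing the bijection. The main work, and the only place requiring genuine care, is setting up the opener/closer encoding so that the equivalences ``terminal $\Leftrightarrow q=0$'' and ``top $\Leftrightarrow p=0$'' hold exactly; once these are established, both the symmetry and the preservation of indecomposability follow formally. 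Note that $\Phi$ does \emph{not} preserve connectedness --- already for $n=2$ it exchanges the crossing diagram with the nested one --- which is consistent with the statement being about indecomposable rather than connected diagrams.
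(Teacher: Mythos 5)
Your proof is correct, and it takes a genuinely different route from the one the paper points to. The paper itself gives no argument: it attributes the proposition to Lepoutre and records only that his proof ``recursively change[s] the position of the leftmost closing endpoint,'' i.e.\ a local, recursive move, whereas you give a complete, one-shot involution via the opener/closer (Hermite-history) encoding. Your two key equivalences hold exactly as stated: just before the closer $b$ of $X=(a,b)$, the other open chords are precisely the $(c,d)$ with $c<b<d$, so those with $c<a$ satisfy $c<a<b<d$ and cover $X$ (hence $X$ is a top chord iff $p=0$), while those with $a<c$ satisfy $a<c<b<d$ and are exactly the out-neighbours of $X$ in the intersection digraph of Definition~\ref{def:intgraph-condiag} (hence $X$ is terminal, i.e.\ a sink, iff $q=0$). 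Since the height $h$ at each closer is determined by the type word alone, the charge reversal $p\mapsto h-1-p$ is a well-defined involution swapping the two statistics closer by closer, and indecomposability is preserved because the type --- hence the height profile, whose interior positivity characterizes indecomposability --- is untouched. What your approach buys beyond the cited one: it is manifestly an involution (the statement only asks for a bijection); it preserves the entire type word, hence the positions of all endpoints and any statistic computable from the height profile; and it exhibits the proposition as a specialization of the classical charge-reversal involution behind the symmetric joint distribution of crossings and nestings in matchings (de M\'edicis--Viennot, Kasraoui--Zeng), since ``no chord crosses $X$ from the right'' and ``no chord covers $X$'' are just the local events $q=0$ and $p=0$. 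Your closing sanity check that connectedness is not preserved (the involution exchanges the crossing and nested diagrams at $n=2$) correctly explains why the statement is formulated for indecomposable rather than connected diagrams.
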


The proof of this result, which was communicated to the authors by Mathias Lepoutre~\cite{Lpersonal}, uses the fact that one can recursively change the position of the leftmost closing endpoint.


\subsection{Combinatorial interpretation}
\label{subsec:decomp}

Before describing the interpretation of Equations \eqref{eq:ABeq1} and \eqref{eq:connected-simple} on chord diagrams, we take the opportunity of refining them to keep track of the number of crossings.
\begin{theorem}\label{theo:refined-ab-equation}
Let $B(z,u,v)$ be the ordinary generating function of indecomposable diagrams counted with respect to the number of chords minus one ($z$), the number of top chords ($u$) and the number of crossings ($v$). Similarly, let $C(z,u,v)$ be the generating function for connected diagrams with the same interpretation of the parameters.
The following equations hold:
\begin{align}
B(z,u,v) &= u + zB(z,1+uv,v)\,B(z,u,v), \label{eq:indecomp} \\
C(z,u,v) &= u + z(C(z,1+uv,v) - C(z,1,v))\,C(z,u,v). \label{eq:connected}
\end{align}\label{theo:new-arques-beraud}
\end{theorem}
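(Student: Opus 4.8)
The plan is to prove both equations at once, from a single bijective decomposition of indecomposable diagrams that simultaneously tracks the three statistics — number of chords (marked by $z$, counted with the $-1$ shift), number of top chords (marked by $u$), and number of crossings (marked by $v$). Concretely, I would show that an indecomposable diagram $D$ of size $\ge 2$ is built in a unique way from a pair of smaller indecomposable diagrams $D_1, D_2$ together with a subset $S$ of the top chords of $D_1$, where $D_1$ is inserted into the interval just after the left endpoint of the root chord of $D_2$, and then each selected chord of $S$ is pulled out across the surrounding structure so that it \emph{remains} a top chord of $D$ while acquiring exactly one new crossing; the unselected top chords of $D_1$, and everything nested beneath them, stay nested and cease to be top. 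This is the diagram-level incarnation of the operations $\Rootdiag$ and $\Diagins$ from Section~\ref{sec:operations}, so I would phrase the construction and its inverse in those terms and use the commutation Lemma~\ref{lem:commut} to check well-definedness.

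Granting this decomposition, the statistics transform as $|D| = |D_1| + |D_2|$, $\mathrm{top}(D) = |S| + \mathrm{top}(D_2)$, and $\mathrm{cr}(D) = \mathrm{cr}(D_1) + \mathrm{cr}(D_2) + |S|$. Summing $z^{|D|-1} u^{\mathrm{top}(D)} v^{\mathrm{cr}(D)}$ over all $D$ of size $\ge 2$, the extra factor of $z$ records the $+1$ in $(|D_1|-1)+(|D_2|-1)+1$, the weight $(uv)^{|S|}$ summed over all $S \subseteq \mathrm{Top}(D_1)$ collapses by the binomial theorem to $(1+uv)^{\mathrm{top}(D_1)}$, and the residual weight of $D_2$ is the plain $u^{\mathrm{top}(D_2)} v^{\mathrm{cr}(D_2)}$; adding the one-chord diagram (contributing $u$) yields exactly $B(z,u,v) = u + zB(z,1+uv,v)B(z,u,v)$. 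For \eqref{eq:connected} I would run the identical argument with $D_1, D_2$ connected, observing that the resulting $D$ is connected precisely when $S \neq \emptyset$: a nonempty $S$ forces at least one chord of the internally connected $D_1$ to cross out into the root component of $D_2$, whereas $S = \emptyset$ leaves $D_1$ entirely nested and hence in a separate component, giving an indecomposable but disconnected diagram. Excluding $S = \emptyset$ replaces $(1+uv)^{\mathrm{top}(D_1)}$ by $(1+uv)^{\mathrm{top}(D_1)} - 1$, and summing produces the factor $C(z,1+uv,v) - C(z,1,v)$, hence \eqref{eq:connected}.

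The hard part will be the merge operation itself, and in particular the verification that it creates \emph{exactly} $|S|$ new crossings. The naive rule "let every selected chord cross the root chord of $D_2$" fails, because two top chords of $D_1$ that are mutually disjoint cannot both straddle the right endpoint of a single chord; the new crossing of a selected chord must instead be taken against the appropriate chord of $D_2$ (the root chord when possible, and otherwise an interior chord lying between the selected chords), and one must pin down the precise endpoint ordering so that no spurious crossings are created with the remaining chords of $D_2$ or among the selected chords, and no crossing internal to $D_1$ or $D_2$ is destroyed. I expect to control this not by a direct geometric placement but by defining the operation recursively through $\Rootdiag$ and $\Diagins$ and establishing the crossing count by induction with Lemma~\ref{lem:commut}. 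Bijectivity would then follow by isolating the root component of $D$, reversing the insertion, and reading $S$ off as the set of top chords of the recovered $D_1$ that cross out of it; confirming that this inverse is well defined on all indecomposable diagrams and restricts correctly to connected ones is the remaining bookkeeping. As a sanity check, specializing $v = 1$ should collapse \eqref{eq:indecomp} and \eqref{eq:connected} to the Arquès–Béraud equation \eqref{eq:ABeq1} and its connected form \eqref{eq:connected-simple}.
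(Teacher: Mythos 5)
Your global skeleton is exactly the paper's: an indecomposable diagram $D$ of size $\ge 2$ is encoded by a triple $(D_1,D_2,S)$ with $S$ a subset of the top chords of $D_1$; the statistics transfer precisely as you state ($|D|=|D_1|+|D_2|$, $\mathrm{top}(D)=|S|+\mathrm{top}(D_2)$, one net new crossing per chord of $S$); summing over $S$ collapses binomially to the substitution $u\mapsto 1+uv$ in the factor for $D_1$; and $D$ is connected exactly when $S\neq\emptyset$, the case $S=\emptyset$ giving $\DiagIns{D_1}{1}(D_2)$ with $D_1$ a separate nested component, which yields \eqref{eq:indecomp} and \eqref{eq:connected} and, at $v=1$, the Arqu\`es--B\'eraud equations. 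All of this bookkeeping, including the connectedness criterion, is correct and agrees with the paper.

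The genuine gap is the merge operation itself, which you explicitly defer, and your proposed fallback would not close it. Lemma~\ref{lem:commut} only records how interval indices are relabelled when $\Rootdiag$ and $\Diagins$ are composed; it carries no information about crossings or top-chord status, and more fundamentally the merged diagram is not obtainable from $D_1$ and $D_2$ by composing those two operations at all: $\Diagins$ inserts an intact subdiagram creating no crossings, while $\Rootdiag$ creates a far-left-anchored chord crossing \emph{every} chord that straddles the target interval --- never ``exactly one crossing per selected chord.'' The paper instead gives a direct geometric construction that you would need to supply: place $D_2$ to the right of $D_1$, detach the left endpoint of the root chord of $D_2$, then sweep through the marked top chords from rightmost to leftmost, at each step planting the left endpoint of the currently open chord at the position just vacated and opening the current marked chord in turn, finally closing the open chord at the far left (so the leftmost marked chord, or the root chord of $D_2$ when $S=\emptyset$, becomes the root of $D$). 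Since top chords cannot nest, their right endpoints increase with their left endpoints, and each swap raises the crossing number by exactly one: the new crossing is between \emph{consecutive elements of the chain} --- so the partner of a selected chord is the next selected chord of $D_1$, with only the rightmost one crossing the root chord of $D_2$, contrary to your guess that the partners lie in $D_2$ --- while crossings with bystander chords are merely transferred from the opened chord to the chord inheriting its left endpoint, leaving the count otherwise unchanged. Bijectivity likewise needs the explicit inverse (open the root chord's left endpoint, repeatedly swap with the leftmost top chord crossing the open arc until the diagram becomes decomposable, reading off $S$ as the chords closed along the way), rather than the vague ``isolate the root component.'' Without this construction and its two verifications (crossing count and top-chord count), what you have is a correct plan with the central step missing.
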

\begin{proof}
From a combinatorial point of view, Equation \eqref{eq:ABeq1} says that every indecomposable diagram with a least two chords can be seen as the product of two indecomposable diagrams, one of which has a marked subset of top chords.

We start by describing the combination part, building a diagram from two smaller ones. Figure~\ref{fig:newdecomposition} gives an example of such a combination.

\begin{figure}[!ht]
\begin{center}
\includegraphics[width=0.95\textwidth]{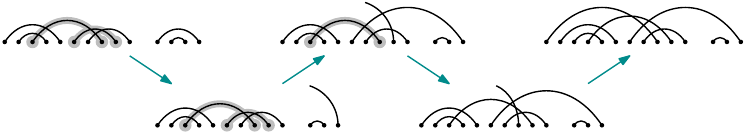}
\end{center}
\caption{An example of how to combine an indecomposable diagram with another indecomposable diagram in which a subset of top chords is marked. The first diagram has 4 top chords, 2 of which are marked. The second diagram has only one top chord. The combination of both induces 3 top chords, as expected.}
\label{fig:newdecomposition}
\end{figure}

Let us thus consider two indecomposable diagrams $D_1$ and  $D_2$, where some top chords of $D_1$ are marked. We run the following algorithm:
\begin{enumerate}
\item Put $D_2$ on the right of $D_1$. 
\item Open the left endpoint of the root chord $D_2$.
\item Consider the rightmost marked top chord. (The top chords are sorted from left to right without ambiguity.) If there are no more marked top chords, go to~\ref{item:nochord}. \label{item:nextchord}
\item Forget the marking of that chord. Then, open its left endpoint, and replace it by the left endpoint of the other open arc. Go to \ref{item:nextchord}. \label{item:swap}
\item Close the open arc at the left of $D_1$. \label{item:nochord}
\end{enumerate}
The composition of two diagrams is thus defined. We denote by $D$ the resulting diagram.

Let us enumerate the top chords of $D$. Each non-marked top chord of $D_1$ is now below a chord (which corresponds to the most immediate marked top chord to its right -- or the root chord of $D_2$ if there were not any marked top chords on its right), so is not a top chord in $D$ anymore. 
On the contrary, each marked top chord of $D_1$ remains a top chord. Indeed, the only chords that change from $D_1$ to $D$ are the marked top chords, and the algorithm is constructed in such a way that a marked top chord never covers the marked top chords on its left. As for the other chords of $D$, it only takes a quick check to observe that non-top chords stay non-top chords, and top chords of $D_2$ stay top chords. Finally, the top chords of $D$ are given by the top chords of $D_2$ and the marked top chords of $D_1$.

As for the number of crossings in $D$, we can notice that the algorithm only creates crossings during the execution of step \ref{item:swap}. Indeed, swapping an open arc and the left endpoint of a top chord (being on the left of the open arc) increases the number of crossings exactly by~$1$. That is why the number of crossings in $D$ is the number of the crossings of $D_1$ and $D_2$, plus the number of marked top chords.

We just proved that the multi-set of diagrams $D$ induced by the combinations of diagrams $D_1$ and $D_2$ has for generating function $zB(z,1+uv,v)\,B(z,u,v)$. To prove \eqref{eq:indecomp}, we only need to show that our way of combining two diagrams to produce a larger diagram is bijective. For the inverse operation, we run the following algorithm, starting from an indecomposable diagram $D$ of size $>1$.
\begin{enumerate}
\item Open the left endpoint of the root chord of $D$.
\item If the resulting diagram is not indecomposable, go to \ref{item:end}. \label{item:if}
\item Consider the leftmost top chord intersecting the open arc.
\item Open its left endpoint, and replace it by the left endpoint of the other open arc.
\item Mark the chord that was just closed. Go to \ref{item:if}.
\item Close the open arc to the right of the leftmost indecomposable component of $D$. We thus obtain two indecomposable diagrams $D_1$ (on the left) and $D_2$ (on the right). \label{item:end}
\end{enumerate}
To see that this algorithm computes an inverse to the first algorithm, the reader may refer again to Figure~\ref{fig:newdecomposition}, which can be likewise read from right to left.
This establishes that every indecomposable diagram which is not the one-chord diagram can be expressed as the combination of two diagrams, and so Equation~\eqref{eq:indecomp} holds.

Note that a new connected component is created by this process if and only if no top chord is marked. Indeed, the only way to form a new component is to close the root chord of $D_2$ directly at the left of $D_1$, which can be done by jumping  Item~\ref{item:swap}. So if we want to enumerate connected diagrams, we have to force diagrams $D_1$ to have at least one marked top edge. Such diagrams are counted by $C(z,1+uv,v) - C(z,1,v)$. We recover Equation~\eqref{eq:connected}.
\end{proof}

\subsection{An application to lambda calculus}
\label{subsec:lambda}

The results of the previous sections have a surprising application to the combinatorics of lambda calculus.
As one of the authors described in \cite{Zcounting}, the original Arquès-Béraud equation \eqref{eq:ABeq1} is also satisfied by the generating function counting certain natural isomorphism classes of terms in lambda calculus (namely, \emph{neutral linear} terms modulo exchange of adjacent lambdas) by size and number of free variables.
This fits a broader pattern of combinatorial connections recently discovered between different fragments of lambda calculus and different families of maps, beginning with a bijection between rooted trivalent maps and linear lambda terms found by Bodini, Gardy, and Jacquot \cite{BoGaJa2013}, and a bijection between rooted planar maps and neutral planar lambda terms found by Giorgetti and Zeilberger \cite{ZGcorr}.
It was also shown in \cite{Ztrivalent} that the bijection of \cite{BoGaJa2013} restricts to a bijection between bridgeless (respectively, bridgeless planar) trivalent maps and linear (respectively, planar) lambda terms with no closed subterms -- such terms were called ``indecomposable'' in \cite{Ztrivalent}, but here we call them \emph{unit-free} to avoid confusion with indecomposable chord diagrams.
Similarly, it is not difficult to check that the bijection of \cite{ZGcorr} restricts to a bijection between bridgeless planar maps and unit-free neutral planar terms.
It is therefore tempting to draw the list of correspondences between families of lambda terms and families of rooted maps pictured in Table~\ref{table:lambda}, where on the right we have indicated the index for the relevant OEIS entry counting objects by size (note that the size of a 3-valent map is defined here as its number of vertices, rather than edges).

\begin{table}

\begin{center}
\begin{tabular}{llc}
family of lambda terms & family of rooted maps & OEIS entry \\
\hline
linear terms  & 3-valent combinatorial maps & A062980\\
planar terms  & planar 3-valent maps & A002005 \\
unit-free linear terms  & bridgeless 3-valent maps & A267827 \\
unit-free planar terms  & bridgeless planar 3-valent maps & A000309 \\
neutral linear terms/$\sim$  & combinatorial maps & A000698 \\
neutral planar terms  & planar maps & A000168 \\
{\bf unit-free neutral linear/$\sim$} & {\bf bridgeless maps} & {\bf A000699} \\
unit-free neutral planar  & bridgeless planar maps & A000260
\end{tabular}
\end{center}
\caption{ Known correspondences between different families of lambda terms and rooted maps. The correspondence in the boldfaced row was previously only conjectured, but is a corollary of our results here. }
\label{table:lambda}
\end{table}

The aforementioned works establish (either directly or as easy consequences) that each family of lambda terms is in the same combinatorial class as the corresponding family of rooted maps, \emph{for every row of Table~\ref{table:lambda} other than the boldfaced one.}
On the other hand, Proposition~\ref{prop:recurrence} above establishes that bridgeless maps are indeed counted by OEIS sequence A000699.
So, to verify the full table, all that remains is to show that unit-free neutral linear terms (modulo exchange of adjacent lambdas) are counted by the same sequence.
\begin{proposition}[{cf.~\cite{Zcounting,Ztrivalent}}]
\label{prop:lambda}
Let $C(z,u)$ be the two-variable generating function counting isomorphism classes of unit-free neutral linear lambda terms by size and number of free variables.
Then $C(z,u)$ satisfies equation \eqref{eq:connected-simple}.
\end{proposition}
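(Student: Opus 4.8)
The plan is to establish \eqref{eq:connected-simple} directly, by exhibiting a grading-preserving decomposition of unit-free neutral linear terms that mirrors its right-hand side; this refines the derivation of the Arquès--Béraud equation \eqref{eq:ABeq1} given for general neutral linear terms in \cite{Zcounting}. First I would fix conventions: a neutral term is either a variable or an application $f\,a$ with $f$ neutral and $a$ normal, while a normal term is a (possibly empty) block of abstractions over a neutral term; the variable $z$ records applications and $u$ records free variables; and a term is \emph{unit-free} exactly when it has no closed subterm. I would note at the outset that unit-freeness forces at least one free variable (a closed term contains itself as a closed subterm), so that $C(z,u)$ has vanishing constant term in $u$, consistent with the shape of \eqref{eq:connected-simple}. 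The base case is the lone variable, which contributes the summand $u$.

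Next I would analyse a unit-free neutral term $n$ of positive size. Being a non-variable it is an application $n = P\,a$ with $P$ neutral and $a$ normal; peeling off the maximal leading block of abstractions writes $a = \lambda x_1\cdots\lambda x_t.\,Q$ with $Q$ neutral. Modulo exchange of adjacent lambdas this block is \emph{unordered}, so the datum it carries is precisely a subset $T \subseteq \mathrm{fv}(Q)$, of size $t$, of the (structurally distinct, since the term is linear) free-variable occurrences of $Q$: each element of $T$ is bound, and each element of $\mathrm{fv}(Q)\setminus T$ remains free in $n$. This yields a map $n \mapsto (P,Q,T)$ whose inverse simply reforms $n = P\,(\lambda^{T}Q)$, with the single outermost application accounting for one factor of $z$.

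The key step is the unit-free characterisation: I would show that $n$ is unit-free if and only if $P$ and $Q$ are unit-free and $T$ is a \emph{proper} subset of $\mathrm{fv}(Q)$. The head $P$ is automatically non-closed (it is unit-free, hence carries a free variable that survives in $n$, so $n$ itself is non-closed as well), and the remaining subterms of $n$ are those of $P$, those of $Q$, and the partial abstraction blocks $\lambda x_j\cdots\lambda x_t.\,Q$; each such block has free set containing $\mathrm{fv}(Q)\setminus T$, so all of them avoid being closed precisely when $\mathrm{fv}(Q)\setminus T \neq \emptyset$, i.e.\ $T \subsetneq \mathrm{fv}(Q)$, whereas $T = \mathrm{fv}(Q)$ makes the argument $a$ a closed subterm. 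Translating to generating functions, the factor $P$ contributes $C(z,u)$; since each free variable of $Q$ independently either stays free (weight $u$) or is placed in $T$ (weight $1$), the pair $(Q,T)$ with $T$ ranging over \emph{all} subsets contributes $C(z,u+1)$, and discarding the single forbidden subset $T=\mathrm{fv}(Q)$ removes exactly the constant-in-$u$ part $C(z,1)$. Hence the positive-size terms contribute $z\,C(z,u)\,(C(z,u+1)-C(z,1))$, which together with the base case $u$ gives \eqref{eq:connected-simple}.

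The main obstacle is the unit-free bookkeeping of the previous paragraph: verifying that properness of $T$ captures unit-freeness uniformly across all the partial abstraction blocks, and confirming that the quotient by exchange of adjacent lambdas is exactly what turns the binding data into an unordered subset, so that the weight $(u+1)^{\mathrm{fv}(Q)}$ (rather than an ordered, factorial-laden count) is the correct one at the level of isomorphism classes. Dropping the properness condition recovers the unrestricted decomposition and hence the original equation \eqref{eq:ABeq1}, which both serves as a sanity check and makes precise the sense in which \eqref{eq:connected-simple} is its unit-free refinement, matching the restriction established in \cite{Ztrivalent}.
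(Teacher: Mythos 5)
Your proof is correct, and it takes the same route the paper intends: the paper's own ``proof'' is just a pointer to \cite{Zcounting,Ztrivalent} (``essentially immediate from definitions''), and the argument it alludes to is precisely your decomposition of a non-variable unit-free neutral term as $P\,(\lambda^{T}Q)$, with the quotient by exchange of adjacent lambdas turning the abstraction block into an unordered subset $T\subseteq\mathrm{fv}(Q)$ (hence the substitution $u\mapsto u+1$), and unit-freeness captured exactly by the properness condition $T\subsetneq\mathrm{fv}(Q)$, whose failure contributes the subtracted $C(z,1)$ in \eqref{eq:connected-simple}. In effect you have supplied the details the paper omits, and they check out, including the two easy-to-miss points that unit-freeness of the head forces it (and hence the whole term) to be non-closed, and that among the partial abstraction blocks only the full block can be closed.
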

\begin{proof}
This is essentially immediate from definitions: see the references \cite{Zcounting} and \cite{Ztrivalent} for formal definitions of the relevant terms, as well as for the proofs of very similar equations.
\end{proof}
\begin{corollary}
\label{corr:lambda-diagrams}
Isomorphism classes of unit-free neutral linear lambda terms of size $n$ and with $k$ free variables are equinumerous with connected chord diagrams of size $n$ and with $k$ top chords.
\end{corollary}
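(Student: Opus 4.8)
The plan is to deduce the corollary entirely at the level of generating functions, exploiting the fact that equation \eqref{eq:connected-simple} determines its solution uniquely. First I would record the two inputs. By Proposition~\ref{prop:lambda}, the generating function $C_\lambda(z,u)$ counting isomorphism classes of unit-free neutral linear terms by size and number of free variables satisfies \eqref{eq:connected-simple}. On the other side, Theorem~\ref{theo:refined-ab-equation} gives the crossing-refined equation \eqref{eq:connected} for connected diagrams, and setting $v=1$ collapses it to
\[ C(z,u,1) = u + z\bigl(C(z,1+u,1) - C(z,1,1)\bigr)\,C(z,u,1), \]
which is exactly \eqref{eq:connected-simple} for $C_{\mathrm{diag}}(z,u) := C(z,u,1)$, the generating function of connected diagrams by size and number of top chords.

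The heart of the argument is that \eqref{eq:connected-simple} admits a unique solution in $\mathbb{Z}[u][[z]]$. Writing any solution as $C(z,u) = \sum_{n \ge 0} c_n(u)\,z^n$ and comparing coefficients of $z^n$, the $z^0$ term forces $c_0(u) = u$, while for $n \ge 1$ the factor of $z$ on the right-hand side yields
\[ c_n(u) = \sum_{i+j=n-1} c_i(u)\,\bigl(c_j(u+1) - c_j(1)\bigr), \]
which expresses $c_n$ in terms of $c_0,\dots,c_{n-1}$ alone. Hence every coefficient is determined recursively, and each $c_n(u)$ is manifestly a polynomial in $u$. Since $C_\lambda$ and $C_{\mathrm{diag}}$ are both solutions, they coincide as formal power series.

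Equating the coefficient of $z^n u^k$ on the two sides then yields the claimed equinumeration, once one checks that the gradings match combinatorially: $z$ marks size and $u$ marks the number of free variables on the lambda side and the number of top chords on the diagram side. I expect this bookkeeping --- confirming that the size normalization used in \cite{Zcounting,Ztrivalent} for lambda terms agrees with the convention of Theorem~\ref{theo:refined-ab-equation}, where $z$ marks the number of chords minus one --- to be the only delicate point; the uniqueness step itself is routine, and no genuinely combinatorial obstacle remains, since all of the combinatorial content has already been absorbed into Proposition~\ref{prop:lambda} and Theorem~\ref{theo:refined-ab-equation}.
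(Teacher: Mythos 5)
Your proposal is correct and takes essentially the same route as the paper: the paper's proof of Corollary~\ref{corr:lambda-diagrams} is precisely the observation that, by Proposition~\ref{prop:lambda} and Theorem~\ref{theo:refined-ab-equation} (specialized at $v=1$), both generating functions satisfy equation \eqref{eq:connected-simple}. The only difference is that you make explicit the uniqueness of the solution via the coefficient recursion $c_n(u) = \sum_{i+j=n-1} c_i(u)\bigl(c_j(u+1) - c_j(1)\bigr)$, a routine step the paper leaves implicit.
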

\begin{proof}
Since by Proposition~\ref{prop:lambda} and Theorem~\ref{theo:refined-ab-equation}, their generating functions both satisfy the same equation \eqref{eq:connected-simple}.
\end{proof}
\begin{corollary}\label{cor:lambda-maps}
The number of isomorphism classes of unit-free neutral linear lambda terms of size $n$ is equal to the number of rooted bridgeless combinatorial maps of size $n$.
\end{corollary}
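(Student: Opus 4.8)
The plan is to obtain the corollary as a size-specialization of the refined correspondence already in hand, composed with the bijection $\theta$. Concretely, I would chain together two enumerative equalities, each established earlier in the paper, and specialize the relevant generating functions at $u=1$.

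First, I would invoke Corollary~\ref{corr:lambda-diagrams}: isomorphism classes of unit-free neutral linear lambda terms of size $n$ with $k$ free variables are equinumerous with connected chord diagrams of size $n$ with $k$ top chords. The content behind this equality is that the two-variable generating function $C(z,u)$ counting the lambda terms satisfies equation \eqref{eq:connected-simple} (Proposition~\ref{prop:lambda}), while setting $v=1$ in equation \eqref{eq:connected} of Theorem~\ref{theo:refined-ab-equation} shows that the generating function for connected diagrams, counted by size and number of top chords, satisfies the very same equation \eqref{eq:connected-simple}. Since \eqref{eq:connected-simple} determines its solution uniquely as a formal power series in $z$ — its degree-$n$ coefficient is expressed through the equation in terms of strictly lower-degree data, so the coefficients can be extracted recursively — the two series coincide term by term. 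Summing over all $k$, equivalently specializing $u=1$, then gives that the number of unit-free neutral linear lambda terms of size $n$ equals the number of connected chord diagrams of size $n$.

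Second, I would apply Theorem~\ref{theo:simplebijection}, which asserts that the number of rooted connected chord diagrams of size $n$ equals the number of rooted bridgeless combinatorial maps of size $n$. Composing this with the equality from the previous paragraph yields exactly the claimed equinumerosity between unit-free neutral linear lambda terms and bridgeless maps at each size $n$.

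There is no genuine obstacle at the level of this corollary: all of the substantive work has been front-loaded into Proposition~\ref{prop:lambda}, Theorem~\ref{theo:refined-ab-equation}, and Theorem~\ref{theo:simplebijection}. The only point meriting a moment's care is the uniqueness of the formal power series solution of \eqref{eq:connected-simple}, which is what licenses the passage from ``both generating functions satisfy \eqref{eq:connected-simple}'' to ``the two generating functions are equal''; this is immediate once one observes that the equation determines the coefficient of $z^n$ from coefficients of lower order. With that in place, the corollary is a one-line specialization at $u=1$ followed by an appeal to the bijection $\theta$ of Theorem~\ref{theo:simplebijection}.
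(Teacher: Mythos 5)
Your proposal is correct and follows essentially the same route as the paper, whose entire proof is ``By combining Corollary~\ref{corr:lambda-diagrams} with Theorem~\ref{theo:simplebijection} (or Proposition~\ref{prop:recurrence}).'' The one point you elaborate beyond the paper---that equation~\eqref{eq:connected-simple} determines its formal power series solution uniquely, coefficient by coefficient---is a legitimate and worthwhile remark, but it belongs to the justification of Corollary~\ref{corr:lambda-diagrams} (where the paper leaves it implicit) rather than constituting a different approach to the present corollary.
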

\begin{proof}
By combining Corollary~\ref{corr:lambda-diagrams} with Theorem~\ref{theo:simplebijection} (or Proposition~\ref{prop:recurrence}).
\end{proof}
\noindent
It is worth remarking that our proof of this enumerative result also implicitly yields a bijection between isomorphism classes of unit-free neutral linear lambda terms and rooted bridgeless combinatorial maps, by composing the bijection $\theta$ between bridgeless maps and connected diagrams with the implicit bijection between connected diagrams and this family of lambda terms that results from their admitting the same recursive decomposition \eqref{eq:connected-simple}.
However, the meaning of this bijection is far less clear because we run into the obstacle posed by Question~\ref{question1}, namely, that it is not obvious what part of a rooted map should correspond to the free variables in a unit-free neutral linear term (i.e., what's counted by the $u$ parameter in $C(z,u)$).
On the other hand, one might try to side-step this obstacle by passing directly from (bridgeless) combinatorial maps to (unit-free) neutral linear terms via an analogue of the bijection of Section~\ref{sec:bijection}.
Given what we know about the transfer of statistics across that bijection (see Table~\ref{tab:transfer}), the following is therefore a natural related question.
\begin{question}
What (if anything) is the lambda calculus analogue for the terminal chords of a chord diagram?
In particular, is there a natural invariant $Q$ of neutral linear terms, such that there is a bijection between connected diagrams of size $n$ with $k_1$ top chords and $k_2$ terminal chords, and isomorphism classes of unit-free neutral linear terms of size $n$ with $k_1$ free variables and $Q = k_2$ (cf.~Corollary~\ref{corr:lambda-diagrams})?
(A good notion of $Q$ should also be symmetrically distributed with the number of free variables among neutral linear terms of size $n$, following Proposition~\ref{prop:topterm-symmetry}.)
\label{question3}
\end{question}
\noindent


\section{Conclusion}

After noticing an enumerative link between connected chord diagrams and bridgeless combinatorial maps, we made this observation into a size-preserving bijection $\theta$ by proving that these two families admit parallel decompositions in terms of a boxed product.  
An alternative decomposition based on root chord/root edge deletion then yielded another bijection $\phi$ between the larger families of indecomposable chord diagrams and rooted combinatorial maps, 
but these two bijections turned out to be essentially equivalent: 
$\theta$ is the restriction of $\phi$, while $\phi$ is the extension of $\theta$ obtained by composing with a ``connec\-ted/brid\-ge\-less root component'' decomposition.
Moreover, we established that the bijection $\phi = \overline\theta$ has many other interesting properties as well: vertices correspond to terminal chords; planarity is equivalent to a forbidden pattern in the world of diagrams.



Some decompositions are apparently only meaningful for one of the two families, such as the decomposition of maps with respect to the number of ingoing edges for the rightmost DFS (Proposition~\ref{prop:translatedcrazyformula}), or the decomposition of diagrams with respect to the top chords (Theorem~\ref{theo:new-arques-beraud}).
On the other hand, since each of these decompositions describes interesting features for one of the combinatorial families, it is natural to wonder if they have analogues in the other class, highlighting new parameters (cf.~Questions~\ref{question1} and~\ref{question2}).
%
There are other nice consequences of the present work which concern transversal areas, such as quantum field theory or lambda calculus.
Indeed, our bijection between maps and diagrams has given interesting new perspectives on these domains and enabled a better understanding of some aspects of the theory, while suggesting a few natural directions for future work.

Finally, one may wonder about a non-recursive approach to a bijection between bridgeless maps and connected diagrams. Although the authors have thought in this direction and see no straightforward answer, it is not impossible that maps and diagrams conceal other nice connections.

\bibliographystyle{plain}
\bibliography{CoYeZe}

\end{document}